\newcommand*{\addFileDependency}[1]{
\typeout{(#1)}
%
%
\@addtofilelist{#1}
%
\IfFileExists{#1}{}{\typeout{No file #1.}}
}\makeatother
\newtheorem {Proposition}{Proposition}[section]
\newtheorem {Lemma}[Proposition] {Lemma}
\newtheorem {Theorem}[Proposition]{Theorem}
\newtheorem {Corollary}[Proposition]{Corollary}
\newtheorem {Assumption}[Proposition]{Assumption}
\newtheorem {Remark}[Proposition]{Remark}
\newtheorem {Definition}{Definition}[section]
\numberwithin{equation}{section}
\def\N{\mathbb{N}}
\def\Z{\mathbf{Z}}
\def\R{\mathbb{R}}
\def\E{\mathbb{E}}
\def\P{\mathbb{P}}
\def\z{\mathbf{z}} 
\def\x{\mathbf{x}}
\def\y{\mathbf{y}}
\def\u{\mathbf{u}}
\def\X{\mathbf{X}}
\def\A{\mathbf{A}}
\def\Y{\mathbf{Y}}
\DeclareMathOperator*{\argmin}{arg\,min}
\def\P{\mathbb P}
\def\E{\mathbb E}
\def\N{\mathbb N}
\def\x{\mathbf x}
\def\z{\mathbf z}
\def\y{\mathbf y}
\def\P{\mathcal{P}}
\def\bSigma{\boldsymbol{\Sigma}}
\def\bA{\boldsymbol{A}}
\def\Ptac{\P^{\rm {\,}a.c}_2}
\def\Var{\operatorname{Var}}
\newcommand\independent{\protect\mathpalette{\protect\independenT}{\perp}}
\def\independenT#1#2{\mathrel{\rlap{$#1#2$}\mkern2mu{#1#2}}}
\def\independenT#1#2{\mathrel{\rlap{$#1#2$}\mkern2mu{#1#2}}}
\begin{document}

\begin{frontmatter}
\title{Wasserstein Spatial Depth}
\runtitle{Wasserstein Spatial Depth}
\runauthor{Bachoc, Gonz\'alez-Sanz, Loubes, and Yao}

\begin{aug}

\author[A]{\fnms{François}~\snm{Bachoc}\ead[label=e1]{francois.bachoc@univ-lille.fr}},
\author[B]{\fnms{Alberto}~\snm{González-Sanz}\ead[label=e2]{ag4855@columbia.edu}},
\author[C]{\fnms{Jean-Michel}~\snm{Loubes}\ead[label=e3]{jean-michel.a.loubes@inria.fr}},
\author[D]{\fnms{Yisha}~\snm{Yao}\ead[label=e4]{yy3381@columbia.edu}}

\address[A]{Universit\'e de Lille, Institut universitaire de France (IUF),
	France, \printead{e1}}

\address[B]{Department of Statistics,
	Columbia University, New York, USA, \printead{e2}}

\address[C]{INRIA,  Universit\'e de Toulouse, France, \printead{e3}}

\address[D]{Department of Statistics,
	Columbia University, New York, USA, \printead{e4}}
\end{aug}

\begin{abstract}
Modeling observations as random distributions embedded within Wasserstein spaces is becoming increasingly popular across scientific fields, as it captures the variability and geometric structure of the data more effectively. However, the distinct geometry and unique properties of Wasserstein spaces pose challenges to the application of conventional statistical tools, which are primarily designed for Euclidean spaces. Consequently, adapting and developing new methodologies for analysis within Wasserstein spaces has become essential.
The space of distributions on $\mathbb{R}^d$ with $d>1$ is not linear, and ``mimic'' the geometry of a Riemannian manifold.  In this paper, we extend the concept of statistical depth to distribution-valued data, introducing the notion of {\it Wasserstein spatial depth}. This new measure provides a way to rank and order distributions, enabling the development of order-based clustering techniques and inferential tools.  We show that Wasserstein spatial depth (WSD) preserves critical properties of conventional statistical depths, notably, ranging within $[0,1]$, transformation and geodesic invariance, vanishing at infinity, reaching a maximum at the geometric median, and continuity. 
Regarding robustness, we characterize the breakdown points of the empirical depth regions and the influence function of the WSD.
Additionally, the population WSD has a straightforward plug-in estimator based on sampled empirical distributions. We establish the estimator's consistency and asymptotic normality.
We also provide a two-sample test for populations of distributions based on the WSD.
Finally, extensive simulations and a real-data application showcase the practical efficacy of the WSD.
\end{abstract}

\begin{keyword}[class=MSC]
\kwd[Primary ]{	62R10}
\kwd{	62G30}
\kwd[; secondary ]{	62G35}
\end{keyword}

\begin{keyword}
\kwd{Distributional data analysis}
\kwd{High dimensional data}
\kwd{Order statistic}
\kwd{Outlier detection}
\kwd{Statistical depths}
\kwd{Wasserstein distance}
\end{keyword}

\end{frontmatter}

\section{Introduction}

Contemporary data collected in various disciplines is complex and multifaceted. Traditional statistical tools, which model data objects as samples from a Euclidean space or vector space, are inadequate to capture the variation and geometry of the data objects.
Random objects lying in general metric spaces, including spaces of functions \citep{wang2016functional}, Wasserstein spaces \citep{muzellec2018generalizing}, and hyperbolic spaces \citep{zhou2021hyperbolic}, are gaining increasing favor in the scientific community. 
For instances, longitudinal images are treated as functions \cite{wang2016functional}; texts and media are modeled as distributions in modern AI training models \cite{ghorbani2020distributional, chan2022data}; certain trees and graphs are embedded into hyperbolic spaces \citep{chami2020trees}. 
It is widely recognized that statistical efficiency can be gained by utilizing special properties of the above metric spaces \citep{bigot2020statistical}.

In this paper, we focus on modeling distribution-valued data objects within Wasserstein spaces. 
There are several advantages to model certain data objects as distributions or probability measures. 
Firstly, it captures the hierarchical variations in the data by simulating a two-stage data-generating process: initially sampling multiple distributions from a Wasserstein space, followed by drawing data points from each sampled distribution.
Secondly, it captures variations of the data along geodesics of the distribution space that are not straight lines as in the Euclidean setting and thus are closer to the observations. Thirdly, it often provides a low-dimensional embedding that effectively represents high-dimensional data, enabling better statistical inference without the curse of the dimension.
Since the Wasserstein space has different structure and property from the Euclidean space, conventional analytic tools cannot apply to distribution-valued data objects. Therefore, new methods specifically designed for analyzing such data are essential.

There have been some efforts in this line of research, including but not limited to histogram regression \citep{bonneel2016wasserstein}, Wasserstein regression \cite{bachoc2023gaussian,chen2023wasserstein, chen2022uniform}, geodesic PCA \citep{AIHP706}, template estimation \citep{boissard2015distribution}, and Wasserstein clustering \citep{zhuang2022wasserstein,del2020optimalflow}.
Despite the above developments, there is limited effort in agnostic exploratory analysis for data objects in Wasserstein spaces \citep{Geenens2023, LopezTukeyMetric, virta2023spatial, dubey2024metric}. Still, exploratory analysis and descriptional statistics are critical to overview the properties of the data distribution before modeling. In particular, a notion of ``ordering'' for distribution-valued objects in Wasserstein spaces will be of fundamental utility.  Besides exploratory analysis, it will also facilitate nonparametric methods for distribution-valued data.

Quantiles, ranks, and signs are pivotal tools of semiparametric and nonparametric statistics. 
Due to the lack of canonical ordering in multi-dimensional Euclidean space, quantile or rank based tools have been limited to one-dimensional data before the creation of statistical depths. The notion of statistical depths fills this gap, extending the notion of order to higher dimension.
Given a distribution $P$ on $\R^d$, the depth of any data point $\x \in\R^d$ is a non-negative value that measures the ``centrality'' of $\x$ with respect to $P$. 
A larger value of depth indicates the data point is more central within the distribution, while data points with small depths are considered outliers or less typical within the distribution, 
worthy of investigation. Several different types of depths have been proposed, including Tukey depth \citep{tukey1975mathematics,oja1983descriptive}, simplicial depth \citep{liu1990notion}, spatial depth \citep{chaudhuri1996geometric, vardi2000multivariate}, Monge-Kantorovich depth \cite{chernoetal17,HallinEtAl.2020.AoS} and lens depth \citep{Lens}. Via endowing multivariate data points with ``center-outward'' orderings, depths allow extension of order statistics, robust inference \citep{liu1993quality, zuo2006limiting} and classification to multivariate data \citep{zuo2000general, mosler2022choosing}. 

Statistical depth theory is one of the main research areas of functional data analysis (FDA). Most of the Euclidean depth functions extend naturally to Hilbert-space-valued data, see \citep{lopez2009concept, Reyes2009topological,NAGY2017373,CUEVAS2009753}. For instance, this is the case for the h-depth \cite{Cuevas.Feb.Frai.2007.CompStat}, the Tukey depth  \cite{Dutta.et.a.2011.BJ} and   its random version \cite{Cuesta-Albertos.Nieto-Reyes.2008.Compt-stat}, the spatial depth \cite{Chakraborty2014spatial}, the integrated depth \cite{CUEVAS2009753}  and the Monge-Kantorovich depth \cite{gonzalezsanz2023monotone}. For Banach spaces, some examples are the integrated depth \cite{CUEVAS2009753}, the band depth  and its modified version \cite{lopez2009concept},  the half-region depth  and its modified version \cite{LOPEZPINTADO20111679}, the $L^\infty$ depth \cite{long.Huang.2015.Preprint} and  the infimal depth \cite{mosler2013depth}.

While it may be tempting to embed the Wasserstein space into a function space, for instance a reproducing kernel Hilbert space (RKHS) \cite{sriperumbudur2010hilbert}, and apply existing functional depth measures,  this approach neglects the intricate geodesic structure of the Wasserstein space.  There is no linear representation of the Wasserstein distance between distributions on $\mathbb{R}^d$ with $d>1$ \citep{bertrand2012geometric}. Existing depths do not generalize well to nonlinear spaces. 
Besides the nonlinearity, the Wasserstein distance is computationally expensive even for empirical measures \citep{peyre2019computational}, which essentially rules out practical implementation of Tukey depth \cite{tukey1975mathematics} and Monge-Kantorovich depth \cite{chernoetal17,HallinEtAl.2020.AoS}. The computational complexity of these two depths grows exponentially with the sample size.

In conclusion, conventional depth measures cannot be directly extended to Wasserstein spaces due to the unique properties and structure discussed above. This requires the development of a new notion of depth tailored specifically for Wasserstein spaces.


\subsection{Contributions}
%
In this paper we develop a new notion of depth to order or rank distributions. It is inspired by  {\it spatial depth} \cite{vardi2000multivariate}, one of the simplest and most widely used notions of statistical depths.
Recall that the spatial depth of a point $\x\in \R^d$ with respect to a probability measure $ P$ over $\R^d$ is defined as 
\begin{equation}\label{conventional_spatial_depth}
	\rm SD(\x; P)=1-\left\|\E\left[\frac{\X-\x}{\|\X-\x\|}\right]\right\|, \quad \X\sim P.
\end{equation}
The spatial depth has been generalized to Hilbert spaces by following exactly the same definition \cite{serfling2002depth,virta2023spatial}. However, the lack of linear structure of the Wasserstein space prevents a straightforward adaptation of the spatial depth. Nevertheless, the Wasserstein metric endows the space of probability measures with a structure of geodesic  metric space (see \cite{AmbrosioGradient}).
For absolutely continuous probability measures $Q$ and $P$, the constant speed geodesic joining $Q$ and $P$ is given by the curve of probability measures 
$$ [0,1]\ni \lambda\mapsto   ((1-\lambda ){\rm I}+\lambda\, T_{Q,P})_{\# } Q, $$
where $\#$ denotes the push-forward operator
and where $T_{Q,P}$ is the optimal transport map from $Q$ to $P$
(see Section~\ref{subsec:WassersteinGeom} for the definitions). The definition of spatial depth for manifold-valued data 
motivates us to define the depth of a probability measure $Q\in \mathcal{P}^{a.c}_2(\R^d)$ (where $ \mathcal{P}^{a.c}_2(\R^d)$ is the set of absolutely continuous measures on $\R^d$ with finite second moments, see Section~\ref{subsec:WassersteinGeom})
with respect to a probability measure over the Wasserstein space  $ \mathbf{P}\in \mathcal{P}(\mathcal{P}_2(\R^d)) $
as
$$  {\rm WSD}(Q; {\bf P}):=1-  \left(\int{ \left\|\E_{P\sim \mathbf{P}}\left[  \frac{\x-T_{Q,P}(\x) }{\mathcal{W}_2(P,Q)}\right]\right\|^2} \mathrm{d}Q(\x)\right)^{\frac{1}{2}}. $$
Above, $\mathcal{W}_2(P,Q)$ is the Wasserstein distance between $P$ and $Q$, being formally defined in  Section~\ref{subsec:WassersteinGeom}.
We shall show that $ {\rm WSD}(Q; \mathbf{P})$ satisfies the same properties as its Euclidean counterpart, namely transformation and geodesic invariance, taking values in $[0,1]$, decreasing at infinity
and attaining maximum at the median. Moreover, we show that both $Q\mapsto  {\rm WSD}(Q; {\bf P})$ and ${\bf P}\mapsto  {\rm WSD}(Q; {\bf P})$ are continuous.  
We quantify statistical robustness by characterizing the influence function of ${\rm WSD}(Q; {\bf P})$ and the breaking points of the regions of its empirical counterpart ${\rm WSD}(Q; {\bf P}_n)$ (${\bf P}_n$ being an empirical measure).
Next, we propose a finite-sample estimator under the so-called two-stage and one-stage sampling models. In the two-stage sampling model, such an estimator  can be computed in polynomial time. Moreover, we show that in both models, the estimator is  consistent, meaning that it approximates the true population depth function as the sample size increases. 
We also prove asymptotic normality.
Then, we provide a permutation-based two-sample test for comparing two distributions ${\bf P}$ and ${\bf Q}$, based on the finite-sample estimator, for which we prove level and power guarantees.
Finally, we provide numerical simulations for real and synthetic datasets. In particular, we highlight that our suggested depth is more informative than depth methods designed for linear spaces and applied to mappings of distributions to these linear spaces.
We also demonstrate the merits of the two-sample test, and its complementarity to a test based on distance profiles in Wasserstein space \cite{dubey2024metric}. 

In conclusion, Wasserstein spatial depth (WSD) serves as a valid measure for ordering objects within Wasserstein spaces, adhering to the axiomatic properties of depth \citep{zuo2000general} and being computationally feasible. This concept facilitates the extension of depth-based analytic tools to Wasserstein spaces, paving the way for future research.

\subsection{Organization}
General notations are provided in Section~\ref{section:notation}.
The definition of WSD is given in Section~\ref{method} with illustrating examples in Section~\ref{section:examples}. In Section~\ref{section:properties}, we show that WSD shares the desirable properties of conventional statistical depths \citep{zuo2000general} and we establish the robustness properties. 
In Section~\ref{consistency}, we tackle consistent estimation with asymptotic normality.
In Section~\ref{section:comparison}, we compare WSD to several depths in general metric spaces \citep{Geenens2023, LopezTukeyMetric, virta2023spatial} adapted to Wasserstein spaces.  
We advocate WSD over the other depths in terms of computational feasibility and assumption flexibility, while possessing all desirable properties of a depth. 
In Section \ref{section:testing} we provide the two-sample test and its properties.
In section~\ref{section:simulation}, extensive numerical simulations are shown to demonstrate the empirical validity and merits of WSD. 
Finally, in Section~\ref{section:application}, we apply it to explore real-world data and make informative discoveries. 
We give a final discussion in Section \ref{section:future:directions}.
All the proofs are provided in the Appendix.

\section{Notation} 
\label{section:notation}

The space of Borel probability measures on a Polish space $(\mathcal{K},d)$ is denoted as 
$\mathcal{P}(\mathcal{K})$. 
For $P \in \mathcal{P}(\mathcal{K})$, its support is written ${\rm supp}( P )$.
The space of Borel finite (signed) measures is denoted as $\mathcal{M}(\mathcal{K})$ and the space of finite (signed)  measures with $0$ mass as $\mathcal{M}_0(\mathcal{K})$, meaning that 
$h\in \mathcal{M}_0(\mathcal{K})$ if $h\in \mathcal{M}(\mathcal{K})$ and $h(\mathcal{K})=0$. 
The integral of a measurable function $f:\mathcal{K} \to \R$ with respect to $P\in \mathcal{P}(\mathcal{K})$ is denoted as 
$$ \int f(\x) \mathrm{d} P(\x)= \int f \mathrm{d}P=P(f).$$
Set $P\in \mathcal{P}(\mathcal{K})$ and $f:\mathcal{K} \to \R$ be measurable. Then 
$$ \|f\|_{L^2(P)}:= \left(\int f^2 \mathrm{d}P \right)^{1/2}$$
denotes the $L^2(P)$-norm of $f$. 
The Hilbert space of measurable functions with finite $L^2(P)$-norm is denoted as $L^2(P)$
with inner product $\langle \cdot , \cdot \rangle_{L^2(P)}$.
We also extend the definition 
of the Hilbert space $L^2(P)$ and the associated notation 
to vector-valued functions, with for $f,g:\mathcal{K} \to \R^k$, 
$$
\langle
f,g
\rangle_{L^2(P)}:= \int \langle f, g \rangle \mathrm{d}P.
$$
We say that a sequence $\{\mu_n\}_{n\in \N}\subset \mathcal{P}(\mathcal{K})$ converges weakly to $\mu\in \mathcal{P}(\mathcal{K})$ if 
$$ \int f\, \mathrm{d}\mu_n \longrightarrow \int f\, \mathrm{d}\mu  $$
for every bounded and continuous function $f:\mathcal{K}\to \R$. In such a case we write $\mu_n\xrightarrow{\mathcal{P}(\mathcal{K})} \mu$ and also say that $\mu_n\rightarrow \mu$ in the weak sense of $\mathcal{P}(\mathcal{K})$.
For $Z_n \sim \mu_n$ and $Z \sim \mu$ we write similarly $Z_n\xrightarrow{\mathcal{P}(\mathcal{K})} Z$
and we may also write simply $Z_n\xrightarrow{w} Z$.
Such a convergence is metrizable by means of the so-called bounded Lipschitz metric \cite[p.\ 73]{vanderVaart1996}
\begin{multline*}
	d_{{\rm BL}}(\mu,\nu) 
	= \sup\bigg\{ \int f(x)\mathrm{d}(\mu-\nu)(x):   \\ |f(x)|\leq 1\ {\rm and}\ |f(x)-f(y) |\leq d(x,y), \ \forall \, x,y\in \mathcal{K}\bigg\}.  
\end{multline*}

\section{From Euclidean to Wasserstein  spatial depth}\label{method} 
In this section we define our notion of Wasserstein spatial depth. In Section~\ref{subsect:Euclidean} we recall the definition of Euclidean spatial depth and its main properties. In Section~\ref{subsec:Geodesic} we provide our interpretation of  spatial depth in terms of geodesics, which allows for its generalization to the Wasserstein space of measures (see Section~\ref{subsec:WassersteinGeom}). For readers interested in a more comprehensive understanding of the mathematical concepts discussed in Sections~\ref{subsec:Geodesic} and \ref{subsec:WassersteinGeom}, we recommend consulting the monograph \cite{AmbrosioGradient} for an in-depth exposition. 

\subsection{Euclidean spatial depth}\label{subsect:Euclidean}
In $\R^d$, for $d>1$, the spatial depth of a point $\x$ with respect to a random variable $\X\sim P$ is defined as 
$$   \rm SD(\x; P)=1-\left\|\E\left[\frac{\X-\x}{\|\X-\x\|}\right]\right\|.$$
Throughout the paper, we use the convention $\mathbf{0}/0 = \mathbf{0}$.
The spatial depth shares the following properties with the univariate canonical depth function \( 2\min(F(x), 1-F(x)) \). First, \(  \rm SD(\x; P) \) belongs to the interval \([0,1]\). Second, the geometric median, defined as 
\[
{\bf m}_{\X} \in \argmin_{{\bf m} \in \mathbb{R}^d} \E[\|\X - {\bf m}\|],
\]
satisfies \(  \rm SD({\bf m}_{\X}; P) = 1 \). Third, as \( \|\x\| \to \infty \), we have that 
\(
\rm SD(\x; P) \to 0.
\)
Finally, for an isometric  transformation \(T:\R^d\to \R^d\), it holds that 
\[
\rm SD(T(\x); T\# P) 
=  \rm SD(\x; P),
\]
where again the push-forward operator $\#$ is defined in Section \ref{subsec:WassersteinGeom}.

\subsection{Geodesic interpretations of the spatial depth}\label{subsec:Geodesic}
Let $(\mathcal{M}, d) $ be a metric space. A curve $\{\gamma_t^{\x\to \y}\}_{t\in [0,1]}$
valued in $\mathcal{M}$
is a (constant speed) geodesic joining $\x\in \mathcal{M}$ to $\y\in \mathcal{M}$ if   
$$ d(\gamma_t^{\x\to \y},\gamma_s^{\x\to \y})=(t-s) d(\x,\y), \quad \text{for all } 0\leq s\leq t\leq 1. $$
The space $(\mathcal{M}, d) $ is said to be geodesic if any two points are joined by at least one geodesic. The length of a curve $\{\gamma_t\}_{t\in [0,1]}$
with values in $\mathcal{M}$ (not necessarily a geodesic)
is defined as 
$ L(\gamma)= \int_{0}^1 | \gamma_t' | \mathrm{d} t $, where 
$ | \gamma_t' | = \lim_{s\to t}\frac{d(\gamma_t,\gamma_s)}{|t-s|}$.  Assume now that $\mathcal{M}\subset \R^d$ is a Riemannian manifold with metric tensor $\{g_{\x}\}_{\x\in \mathcal{M}}$. Then it holds that 
$$ L(\gamma) =\int_{0}^1 \sqrt{g_{\gamma_t}(\partial_t \gamma_t,\partial_t \gamma_t)} \mathrm{d}t, $$
where $ \{\partial_t \gamma_t\}_{t\in [0,1]}$ denotes the velocity (standard time derivative) of the curve $\{\gamma_t\}_{t\in [0,1]}$. 

In $\R^d$, a geodesic joining $\x$ and $\y$ is just the segment $\gamma_{t}^{\x\to \y}=(1-t)\x+ t \y $, $t\in [0,1]$. Therefore, the spatial depth of $\x$ can be seen as the spatial depth of the velocities at time $0$
$$   \rm SD(\x; P)=1-\left\|\E\left[\frac{\partial_t\vert_{t=0} \gamma_{t}^{\x\to \X}}{\|\partial_t\vert_{t=0} \gamma_{t}^{\x\to \X}\|}\right]\right\|.$$
This allows for the following Riemannian generalization 
of the spatial depth
$$   \rm SD^{general}(\x; P)=1-\sqrt{g_{\x}\left(\E\left[\frac{\partial_t\vert_{t=0} \gamma_{t}^{\x\to \X}}{\|\partial_t\vert_{t=0} \gamma_{t}^{\x\to \X}\|}\right], \E\left[\frac{\partial_t\vert_{t=0} \gamma_{t}^{\x\to \X}}{\|\partial_t\vert_{t=0} \gamma_{t}^{\x\to \X}\|}\right] \right)}.$$

\subsection{Geodesic spatial depth over the space of measures}\label{subsec:WassersteinGeom}
Let $\mathcal{P}_p(\R^d)$ be the space of Borel probability measures over $\R^d$ with finite $p$th order moment. The optimal transport cost between  two probability measures $P,Q\in \mathcal{P}_p(\R^d)$ is defined as 
\begin{equation}\label{Wassertesin}
	{\rm OT}_p(P,Q)=\inf_{\pi\in \Pi(P,Q) } \frac{1}{p} \int \|\x-\y\|^p \mathrm{d}\pi(\x,\y),
\end{equation}
where $\Pi(P,Q)\subset \mathcal{P}_p(\R^d\times \R^d)$ stands for the set of probability measures  with marginals $P$ and $Q$, i.e., $(\X,\Y)\sim \pi\in \Pi(P,Q)$  if $\X\sim P$ and $\Y\sim Q$. For $p\geq 1$, the mapping $(P,Q)\mapsto \mathcal{W}_p(P,Q)=\left({\rm OT}_p(P,Q)\right)^{\frac{1}{p}}$ defines a distance over the space $\mathcal{P}_p(\R^d)$ such that
$$ \mathcal{W}_p(P_n,P)\to 0 \quad  \iff \quad P_n\xrightarrow{\mathcal{P}(\R^d)}  P \quad {\rm and}\quad  \int \|\x\|^p \mathrm{d} P_n(\x) \to \int \|\x\|^p \mathrm{d} P(\x).  $$

We focus now on the case $p=2$. 
We define $\mathcal{P}^{a.c}_2(\R^d)$ as the subset of $\mathcal{P}_2(\R^d)$ composed of absolutely continuous measures.
If $P$ belongs to $\mathcal{P}^{a.c}_2(\R^d)$, there exists a unique minimizer $\pi_{P,Q}$ of \eqref{Wassertesin}, for $p=2$. Moreover, there exists a unique gradient of a convex function $ T_{P,Q}=\nabla \phi_{P,Q}  $ such that $\pi_{P,Q}=({\rm I}\times \nabla \phi_{P,Q})_{\#} P$.
The map $T_{P,Q}$ is called an optimal transport map.
Here, for a probability measure $\mu$ and a Borel mapping $T$,   $T_\# \mu$ denotes the push forward measure, which is the distribution of $T(\X)$, for $\X\sim \mu$.

In \cite{Otto2001}, the author demonstrated that \( \mathcal{W}_2 \) serves as the natural metric for \( \mathcal{P}_2(\mathbb{R}^d) \), aimed at describing the long-term behavior of solutions to the porous medium equation. This metric also imparts a geodesic metric space structure to \( \mathcal{P}_2(\mathbb{R}^d) \).

It is natural in the following sense. If $\{\X_t\}_{t\in [0,1]}$ is a curve of random vectors with $\partial_t \X_t={\bf v}_t(\X_0)$,  then its associated curve of distributions $\{P_t\}_{t\in [0,1]}$ satisfies the so-called transport/continuity equation 
\begin{equation}
	\label{ContinuityEq}
	\partial_t P_t+{\rm div}( {\bf v}_t P_t )=0
\end{equation}
in an appropriate weak sense. The continuity equation is commonly used in fluid mechanics, where  ${\bf v}_t$ represents the flow velocity vector field.    However, given the curve $\{\X_t\}_{t\in [0,1]}$ there could exist several curves of velocity fields $\{{\bf v}_t\}_{t\in [0,1]}$ solving \eqref{ContinuityEq}, i.e., generating the same flow. Among all  of them, there exists only one belonging to 
\begin{equation}
	\label{eq:WassersteinVar}
	\argmin\left\{\int_0^1 \|{\bf v}_t\|_{L^2(P_t)}^2 \mathrm{d} t:  \quad \partial_t P_t+{\rm div}( {\bf v}_t P_t )=0\right\}.
\end{equation}
The  tangent bundle of $(\mathcal{P}_2(\R^d), \mathcal{W}_2)$ is 
$$ \mathcal{T}_{P}(\mathcal{P}_2(\R^d))= \overline{\{ \nabla \phi: \quad \phi\in \mathcal{C}^{\infty}_c(\R^d)\}}^{L^2(P)}, \quad  P\in \mathcal{P}_2(\R^d) $$
where $\mathcal{C}^{\infty}_c(\R^d)$ denotes the set of infinitely differentiable functions with compact support.
Above, $\overline{A}^{L^2(P)}$ denotes the closure of a subset $A$ in the Hilbert space $L^2(P)$. 
Given two probability measures $P$ and $Q$, a geodesic is any curve $\{\gamma_{t}^{P\to Q}\}_{t\in [0,1]}$ with endpoints $\gamma_0^{P\to Q}=P$ and $\gamma_1^{P\to Q}=Q$ with minimal velocity, i.e., any element of 
\begin{equation}
	\label{eq:WassersteinVar2}
	\argmin \left\{\int_{0}^1 \|{\bf v}_t\|_{L^2(\gamma_t)}^2 \mathrm{d} t:  \quad \partial_t \gamma_t+{\rm div}( {\bf v}_t \gamma_t )=0, \ \gamma_0=P \ {\rm and}\ \gamma_1=Q\right\}.
\end{equation}
If $P$ belongs to $ \mathcal{P}^{a.c}_2(\R^d)$, there exists a unique geodesic given by the relation 
\begin{equation} \label{eq:unique:geodesic}
	\gamma_t^{P\to Q}= ( (1-t) {\rm I}+t T_{P,Q})_{\#} P.
\end{equation}  
Its velocity field at $t=0$ is 
$ {\bf v}_0^{P\to Q}= T_{P,Q}- {\rm I} $ and the Riemannian inner product in $ \mathcal{T}_{P}(\mathcal{P}_2(\R^d))$ is $\langle \cdot, \cdot\rangle_{L^2(P)}$.
Therefore, the Wasserstein spatial depth of a probability measure $Q\in \mathcal{P}_2^{a.c}(\R^d)$ with respect to a probability measure ${\bf P}$ over $\mathcal{P}_2(\R^d)$ is defined as 
$$  {\rm WSD}(Q; {\bf P}):=1-   \left\|\E_{P\sim \mathbf{P}}\left[  \frac{ {\bf v}_0^{Q\to P} }{\|{\bf v}_0^{Q\to P}\|_{L^2(Q)}}\right]\right\|_{L^2(Q)}. $$
Since ${\bf v}_0^{Q\to P}= T_{Q,P}- {\rm I} $ we get the following definition of spatial depth.   
\begin{Definition}\label{Wdepth_definition}
	The Wasserstein spatial depth of a distribution $Q \in  \mathcal{P}^{a.c}_2(\R^d)$ with respect to a distribution of distributions $\mathbf{P}\in \mathcal{P}(\mathcal{P}_2(\R^d)) $ is defined as 
	$$  {\rm WSD}(Q; {\bf P}):=1-  \left(\int{ \left\|\E_{P\sim \mathbf{P}}\left[  \frac{\x-T_{Q,P}(\x) }{\mathcal{W}_2(P,Q)}\right]\right\|^2} \mathrm{d}Q(\x)\right)^{\frac{1}{2}}. $$
	When $\mathbb{P}_{P\sim {\bf P}}(\mathcal{W}_2(P,Q)=0)\neq 0$, we set $\frac{\x-T_{Q,P}(\x) }{\mathcal{W}_2(P,Q)}=0$ for all $\x$ when $\mathcal{W}_2(P,Q)=0$.
\end{Definition}

Note that the definition of $ {\rm WSD}(Q; {\bf P})$ is focused on absolutely continuous distributions $Q$, while the distributions  that ${\bf P}$ samples can be arbitrary (for instance, absolutely continuous, discrete, or a mixture of both).
We also refer to the discussion in 
Section~\ref{section:future:directions} on this point.

\section{Examples}\label{section:examples}
In this section we give several examples where the WSD can be computed explicitly.

\subsection{Univariate case}
In the case of univariate distributions, WSD reduces to quantile spatial depth.
The univariate Wasserstein distance has a flat structure since there is an isometric homeomorphism between distributions and the corresponding generalized quantile functions. Consequently, the Wasserstein distance between univariate distributions $P$ and $Q$ has the simple form
$$ \mathcal{W}_2^2(P,Q)=\int_{0}^1 (F^{-1}_P(u)- F^{-1}_Q(u) )^2 \mathrm{d}u,$$
where 
$ F^{-1}_P(u)=\inf\{x\in \R: \ u\leq P((-\infty, x])\}$.
Moreover, the univariate case is the unique case where the composition of optimal transport maps (here non-decreasing functions) is still an optimal transport map. Therefore, the spatial depth is just 
$$  {\rm WSD}(Q; {\bf P}):=1-  \left(\int_0^1{ \left(\E_{P\sim \mathbf{P}}\left[  \frac{F^{-1}_P(u)- F^{-1}_Q(u)}{\left(\int_{0}^1 (F^{-1}_P(u)- F^{-1}_Q(u) )^2 \mathrm{d}u\right)^{\frac{1}{2}}}\right]\right)^2} \mathrm{d}u\right)^{\frac{1}{2}}, $$
which in short notation stands
$$  {\rm WSD}(Q; {\bf P}):=1-  \left\|{ \E_{P\sim \mathbf{P}}\left[  \frac{F^{-1}_P- F^{-1}_Q}{\left\|F^{-1}_P- F^{-1}_Q \right\|_{L^2([0,1])} }\right]}\right\|_{L^2([0,1])}, $$
which is the spatial depth of the quantile functions in the Hilbert space $L^2([0,1])$ (see \cite{serfling2002depth,vardi2000multivariate,virta2023spatial}).

\subsection{Location families}
Consider that $\mathbf{P}$ is supported on a location family, a set of shifted distributions indexed by the location parameter $\boldsymbol{\theta}$, where $P \sim \mathbf{P}$ has parameter $\boldsymbol{\theta}_P$. 
In this case, $\mathbf{P}$ coincides with the distribution of $\boldsymbol{\theta}_P$. Then the WSD reduces to
\begin{equation}\label{spatial_depth}
	{\rm WSD}(Q; {\bf P})= 1- \left\|\E_{P\sim \mathbf{P}}  \Bigg( \frac{\boldsymbol{\theta}_P-\boldsymbol{\theta}_Q }{\|\boldsymbol{\theta}_P-\boldsymbol{\theta}_Q\|} \Bigg) \right\| = 1- \left\|\E_{\boldsymbol{\theta}}  \Bigg( \frac{\boldsymbol{\theta}-\boldsymbol{\theta}_Q }{\|\boldsymbol{\theta}-\boldsymbol{\theta}_Q\|} \Bigg) \right\|,
\end{equation}
which is the Euclidean spatial depth of $\boldsymbol{\theta}_Q$ with respect to the distribution of $\boldsymbol{\theta}$. This also includes the Gaussian location family (see below). 

\subsection{Gaussian families}\label{gaussian_exp}
It is well known that the optimal transport problem between Gaussian probability measures admits a closed form (see \cite{CuestaAlbertos1996OnLB}). In particular if $Q$ and $P$ are non degenerated Gaussian with means $\boldsymbol{\mu}_Q$ and $\boldsymbol{\mu}_P$ and (invertible) covariance matrices $ \bSigma_Q $ and $\bSigma_P$, respectively, the optimal transport map $T_{Q,P}$ is 
$$ \boldsymbol{\mu}_P+
\bA_{Q,P}
(\x-\boldsymbol{\mu}_Q) $$
with
$$
\bA_{Q,P} = 
\bSigma_Q^{-\frac{1}{2}}
\left( 
\bSigma_Q^{\frac{1}{2}}
\bSigma_P
\bSigma_Q^{\frac{1}{2}}
\right)^{\frac{1}{2}}
\bSigma_Q^{-\frac{1}{2}}.
$$
Therefore, if ${\rm supp}({\bf P})$ is a set of Gaussian probability measures and $Q$ is a non-degenerated Gaussian, 
then the WSD can be equivalently formulated as
\begin{multline*}
	{\rm WSD}(Q; {\bf P}) =\\
	1-  \left(\int{ \left\|\E_{P\sim \mathbf{P}}\left[  \frac{\x-\boldsymbol{\mu}_P-\bA_{Q,P}(\x-\boldsymbol{\mu}_Q) }{\left(\|\boldsymbol{\mu}_P-\boldsymbol{\mu}_Q\|^2+{\rm Tr}\left(\bSigma_P+\bSigma_Q-2\left(\bSigma^{\frac{1}{2}}_P \bSigma_Q \bSigma^{\frac{1}{2}}_P\right)^{\frac{1}{2}}\right)\right)^{\frac{1}{2}}}\right]\right\|^2} \mathrm{d}Q(\x)\right)^{\frac{1}{2}}. 
\end{multline*}
In the special case of a common $\bSigma$ for all $P\in {\rm supp}(\mathbf{P})$, and when $\bSigma_Q  = \bSigma$, the above formula reduces to
\begin{equation*}
	{\rm WSD}(Q; {\bf P})=
	1- \left\|\E_{P\sim \mathbf{P}}  \Bigg( \frac{\boldsymbol{\mu}_P-\boldsymbol{\mu}_Q }{\|\boldsymbol{\mu}_P-\boldsymbol{\mu}_Q\|} \Bigg) \right\|,
\end{equation*}
which is the Euclidean spatial depth function of $\boldsymbol{\mu}_Q$.
When $\mathbf{P} = \frac{1}{n}\sum_{i=1}^n \delta_{P_i}$, we obtain
\begin{multline*}
	{\rm WSD}(Q; {\bf P}):=\\
	1 - \left(\int{ \left\|\frac{1}{n} \sum_{i=1}^n \left[  \frac{\x-\boldsymbol{\mu}_{P_i}-\bA_{Q,P_i} (\x-\boldsymbol{\mu}_Q) }{\left(\|\boldsymbol{\mu}_{P_i}-\boldsymbol{\mu}_Q\|^2+{\rm Tr}\left(\bSigma_{P_i}+\bSigma_Q-2\left(\bSigma^{\frac{1}{2}}_{P_i} \bSigma_Q \bSigma^{\frac{1}{2}}_{P_i}\right)^{\frac{1}{2}}\right)\right)^{\frac{1}{2}}}\right]\right\|^2} \mathrm{d}Q(\x)\right)^{\frac{1}{2}}.  
\end{multline*}
Furthermore, if $\bSigma_{P_i}=\bSigma_{Q}$ for all $i=1, \dots, n$, the WSD is 
$$ 1- \left(\int{ \left\| \frac{1}{n}\sum_{i=1}^n   \frac{\boldsymbol{\mu}_{P_i}- \boldsymbol{\mu}_Q }{\|\boldsymbol{\mu}_{P_i}-\boldsymbol{\mu}_Q\|}\right\|^2} \mathrm{d}Q(\x)\right)^{\frac{1}{2}}=1- \left\|\frac{1}{n} \sum_{i=1}^n   \frac{\boldsymbol{\mu}_{P_i}- \boldsymbol{\mu}_Q }{\|\boldsymbol{\mu}_{P_i}-\boldsymbol{\mu}_Q\|}\right\|.$$

\section{Properties of Wasserstein spatial depth}
\label{section:properties}

Zuo and Serfling 
postulated in \cite{zuo2000general} 
the main four properties that a data depth  should satisfy in Euclidean spaces. Those properties are  {\it affine invariance}, meaning that the data depth function is invariant to affine transformations; {\it center-outward monotonicity}, meaning that the depth function decreases along rays arising from the deepest point;  {\it vanishing at infinity}, meaning that the depth function tends to $0$ as the distance to the deepest point tends to infinity; {\it maximality at the center}, meaning that for elliptic distributions, its geometric center is the unique deepest point. The Euclidean spatial depth satisfies some of these properties. In particular, it is invariant to isometric  transformations, it vanishes at infinity and, if the spatial median is unique it is the unique maximizer of the spatial  depth.    As $\R^d$ is trivially embedded on 
$\mathcal{P}_2(\R^d)$ by means of the mapping $\x\mapsto \delta_\x$, we cannot expect better properties for the Wasserstein space adaptation. 

\subsection{General properties}
In this section we prove that the WSD  shares the main properties of the Euclidean spatial depth, i.e., it belongs to the interval $[0,1]$, it decreases at infinity and it is transformation invariant.
\begin{Theorem}\label{Theo:Main}
	Set  $\mathbf{P}\in \mathcal{P}(\mathcal{P}_2(\R^d)) $. Then the following properties hold: 
	\begin{enumerate}
		\item \big(Values in $[0,1]$.\big) \label{th:pointIn01} $ {\rm WSD}(Q; {\bf P})\in [0,1]$ for all $Q\in  \mathcal{P}^{a.c}_2(\R^d)$.
		\item \big(Transformation invariance.\big) \label{TransformInv}  Assume that $d\geq 2$. Then for any  isometry $F:\mathcal{P}_2(\R^d)\to \mathcal{P}_2(\R^d)$, it holds that 
		$$   {\rm WSD}( F(Q); F_{\#} {\bf P})= {\rm WSD}(Q;  {\bf P}), \quad \text{for all $Q\in  \mathcal{P}^{a.c}_2(\R^d)$.} $$
		\item \big(Vanishing at infinity.\big) Let $\{Q_n\}_{n\in \N}\subset   \mathcal{P}^{a.c}_2(\R^d)$ be a sequence such that $\mathcal{W}_2(Q_n,Q)\to +\infty$, for one $Q\in \mathcal{P}_2(\R^d)$, then 
		$  {\rm WSD}(Q_n; {\bf P}) \to 0 . $
	\end{enumerate}
\end{Theorem}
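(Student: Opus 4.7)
The plan is to address the three items in turn; each reduces to a short manipulation once the right object-level identity is in place.

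For item~\ref{th:pointIn01} (values in $[0,1]$), the upper bound is immediate from nonnegativity of the subtracted square root. For the lower bound, I would apply Jensen's inequality to move the squared norm inside the expectation over $P\sim\mathbf{P}$, swap the $\x$-integral against $Q$ with this expectation by Fubini, and invoke the identity $\int\|\x-T_{Q,P}(\x)\|^2\,\mathrm{d}Q(\x)=\mathcal{W}_2^2(P,Q)$ to conclude that the integral is at most $1$.

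For item~\ref{TransformInv} (transformation invariance), the essential input is Kloeckner's classification of the isometries of $(\mathcal{P}_2(\R^d),\mathcal{W}_2)$ when $d\ge 2$: every such isometry $F$ is of the form $F(P)=\phi_{\#}P$ for some Euclidean isometry $\phi(\x)=A\x+b$, with $A$ orthogonal. Equivariance of optimal transport then gives $T_{F(Q),F(P)}=\phi\circ T_{Q,P}\circ\phi^{-1}$, hence
\[
\phi(\x)-T_{F(Q),F(P)}(\phi(\x))=A\bigl(\x-T_{Q,P}(\x)\bigr),
\]
while $\mathcal{W}_2(F(P),F(Q))=\mathcal{W}_2(P,Q)$. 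Changing variables $\y=\phi(\x)$ in the outer integral against $F(Q)=\phi_{\#}Q$ and using $\|A\cdot\|=\|\cdot\|$ yields the claimed equality. The restriction $d\ge 2$ enters precisely through Kloeckner's theorem (for $d=1$ the isometry group is strictly larger).

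For item~3 (vanishing at infinity), set $U_{n,P}(\x):=(\x-T_{Q_n,P}(\x))/\mathcal{W}_2(P,Q_n)$, which is a unit vector in $L^2(Q_n;\R^d)$ by the identity used in item~\ref{th:pointIn01}. The triangle inequality yields $\mathcal{W}_2(Q_n,P)\ge \mathcal{W}_2(Q_n,Q)-\mathcal{W}_2(Q,P)\to\infty$ for every $P\in\mathcal{P}_2(\R^d)$. Since $T_{Q_n,P}\#Q_n=P$, the quantity $\|T_{Q_n,P}\|_{L^2(Q_n)}^2=\int\|\y\|^2\,\mathrm{d}P(\y)$ is independent of $n$, so
\[
\left\|U_{n,P}(\x)-\frac{\x}{\mathcal{W}_2(P,Q_n)}\right\|_{L^2(Q_n)}=\frac{\|T_{Q_n,P}\|_{L^2(Q_n)}}{\mathcal{W}_2(P,Q_n)}\longrightarrow 0,
\]
and a parallel triangle-inequality argument exploiting $\sqrt{\int\|\x\|^2\,\mathrm{d}Q_n(\x)}=\mathcal{W}_2(\delta_0,Q_n)$ gives $\|\x\|_{L^2(Q_n)}/\mathcal{W}_2(P,Q_n)\to 1$. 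Together these imply $\|U_{n,P}-U_{n,P'}\|_{L^2(Q_n)}\to 0$ for each fixed pair $(P,P')$. Since each $U_{n,P}$ has unit $L^2(Q_n)$-norm, polarization yields
\[
\int\bigl\|\E_{P\sim\mathbf{P}}[U_{n,P}]\bigr\|^2\,\mathrm{d}Q_n=1-\tfrac{1}{2}\,\E_{P,P'}\bigl[\|U_{n,P}-U_{n,P'}\|_{L^2(Q_n)}^2\bigr],
\]
and dominated convergence (the integrand is bounded by $4$) forces the right-hand side to $1$, hence $\mathrm{SD}(Q_n;\mathbf{P})\to 0$.

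The main obstacle lies in item~3: making the pointwise (in $(P,P')$) convergence robust enough to justify dominated convergence requires bounding $\|\x\|_{L^2(Q_n)}$ in terms of $\mathcal{W}_2(P,Q_n)$ uniformly in $n$ for $\mathbf{P}$-almost every $P$. Item~\ref{TransformInv} is conceptually routine but depends on the external input of Kloeckner's classification.
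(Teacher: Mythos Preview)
Your arguments for items~\ref{th:pointIn01} and~3 are correct and close in spirit to the paper's. For item~\ref{th:pointIn01}, your Jensen/Fubini route is actually slightly cleaner than the paper's, which writes the $L^2(Q)$-norm via duality $\sup_{\|G\|\le 1}\langle\cdot,G\rangle$ and then swaps the sup with the expectation. For item~3, the paper expands $A_n=\int\|\E_P[U_{n,P}]\|^2\,\mathrm{d}Q_n$ as $\E_{P,P'}[\langle U_{n,P},U_{n,P'}\rangle_{L^2(Q_n)}]$ and shows the inner product converges to $1$ pointwise in $(P,P')$; this is equivalent to your polarization statement that $\|U_{n,P}-U_{n,P'}\|^2\to 0$, and both rely on the same two estimates (that $\|T_{Q_n,P}\|_{L^2(Q_n)}$ is bounded in $n$ and that $\mathcal{W}_2(P,Q_n)\to\infty$). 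Your concern about uniformity is unwarranted: pointwise convergence plus the bound $\|U_{n,P}-U_{n,P'}\|^2\le 4$ is enough for dominated convergence. One small omission: if $\mathbf{P}(\{Q_n\})>0$ then $U_{n,Q_n}=0$ by convention and your polarization identity picks up a $\mathbf{P}(\{Q_n\})$ correction; the paper handles this by noting $\mathbf{P}(\{Q_n\})\to 0$ via tightness of~$\mathbf{P}$.

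There is, however, a genuine gap in item~\ref{TransformInv}. You misstate Kloeckner's classification: for $d\ge 2$ the isometry group of $(\mathcal{P}_2(\R^d),\mathcal{W}_2)$ is \emph{not} exhausted by trivial isometries $P\mapsto\phi_\#P$. It is generated by the trivial isometries together with the maps $\Phi(\varphi)$ sending $P$ to the law of $\varphi(\X-\E[\X])+\E[\X]$ for a linear isometry $\varphi$. These $\Phi(\varphi)$ are shape-preserving but \emph{not} trivial: the Euclidean isometry applied to a given $P$ depends on $\E_P[\X]$, so there is no single $\phi$ with $\Phi(\varphi)(P)=\phi_\#P$ for all $P$. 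Your equivariance argument $T_{F(Q),F(P)}=\phi\circ T_{Q,P}\circ\phi^{-1}$ only covers the trivial case. The paper proves invariance under $\Phi(\varphi)$ by a separate (and not entirely mechanical) calculation: one writes the transport map for the transformed measures, expands the squared norm, and shows the cross terms cancel after integrating against $Q$ and using $(T_{Q,P})_\#Q=P$. You would need to supply this second half.
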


Recall from \cite{bertrand2012geometric} that there are three types of isometries in $(\mathcal{P}_2(\R^d), \mathcal{W}_2)$.
Let  $F:\mathcal{P}_2(\R^d)\to \mathcal{P}_2(\R^d)$ be an isometry, i.e., 
$$ \mathcal{W}_2(F(P), F(Q))=\mathcal{W}_2(P,Q)\quad \text{for all }P,Q\in \mathcal{P}_2(\R^d).$$
Then $F$ is called {\it trivial}  if there exists an isometry  $  f:\R^d\to \R^d$ such that 
$ F(P)=f_\# P$ for all $P\in \mathcal{P}_2(\R^d)$; $F$  is said to {\it preserve shapes} if for all $P\in\mathcal{P}_2(\R^d) $  there exists an isometry  $  f=f_P:\R^d\to \R^d$ such that $ F(P)=f_\# P$; and if $F$ does not preserve shapes, it is said to be {\it exotic}.  An example of nontrivial isometry  on $(\mathcal{P}_2(\R^d), \mathcal{W}_2)$ that preserves shapes is given by the mapping $\Phi(\varphi):P\mapsto \Phi(\varphi)(P)$  where $\varphi:\R^d\to \R^d$ is a linear isometry and $\Phi(\varphi)(P)$ is the
law of the random variable 
$$ \varphi(\X-\E[\X])+\E[\X], \quad {\rm for
}\ \X\sim P . $$
Theorems 1.1 and 1.2 in \cite{bertrand2012geometric} prove that $(\mathcal{P}_2(\R^d), \mathcal{W}_2)$ admits exotic isometries if and only if $d=1$, which is the reason for which the invariance of the WSD holds for $d\geq 2$. 

An interesting property of the Euclidean spatial depth is the geodesic invariance: Set $\lambda\in (0,1)$ and let  $P_{\lambda,\x}$ be the distribution of $ \lambda  \X+ (1-\lambda ) \x $, where $\X\sim P$ and $\x\in \R^d$. Then
$$  \rm SD(\x; P_{\lambda,\x})=1-\left\|\E\left[\frac{\lambda  \X+ (1-\lambda ) \x-\x}{\|\lambda  \X+ (1-\lambda ) \x-\x\|}\right]\right\|= 1-\left\|\E\left[\frac{ \X-\x}{\| \X-\x\|}\right]\right\|= \rm SD(\x; P) .$$
In the following result we show that the analogous result holds for the WSD.
\begin{Lemma}\label{lemma:geo-inva}
	Set  $\mathbf{P}\in \mathcal{P}(\mathcal{P}_2(\R^d)) $,   $P\sim {\bf P}$ and $ Q\in  \mathcal{P}^{a.c}_2(\R^d)$. Let $\{\gamma_t^{Q\to P} \}_{t\in [0,1]}$ be a Wasserstein Geodesic with endpoints $\gamma^{Q\to P}_0 = Q $ and $\gamma^{Q\to P}_1 = P $. Let ${\bf P}_{\lambda,Q}$ be the distribution of  $\gamma^{Q\to P}_\lambda$. Then it follows that 
	$$   {\rm WSD}(Q; {\bf P}_{\lambda,Q}) =  {\rm WSD}(Q; {\bf P})\quad \text{for all }\lambda\in (0,1).$$
\end{Lemma}

\subsection{Maximality at the center}
The set of spatial medians of ${\bf P}$ is defined as
$$ \argmin_{Q\in \mathcal{P}_2(\R^d)}\E_{P\sim \mathbf{P}}[\mathcal{W}_2(P,Q)] .$$
The following result shows that, under some assumptions,  the set of spatial medians which are absolutely continuous with respect to Lebesgue measure has maximum depth.
\begin{Theorem}\label{Theo:RelationSpatial}
	Set  $\mathbf{P}\in \mathcal{P}(\mathcal{P}_2(\mathcal{K})) $ for a compact set $\mathcal{K}\subset \R^d$. Assume that $\mathbf{P}$ is supported on a finite set $\{P_1, \dots, P_n\}$. Then any 
	$$ Q\in \mathcal{P}^{a.c}_2(\mathcal{K})\cap \argmin_{Q'\in \mathcal{P}(\mathcal{K})}\E_{P\sim \mathbf{P}}[\mathcal{W}_2(P,Q')] $$
	such that $Q\neq P_i $ for all $i=1, \dots, n$ satisfies
	$ {\rm WSD}(Q; {\bf P})=1$.
\end{Theorem}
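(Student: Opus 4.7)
My approach is to recognize that the integrand in the WSD formula is exactly the Wasserstein-gradient of the spatial median functional at $Q$, and then invoke the first-order optimality condition. Writing $\mathbf{P} = \sum_{i=1}^n w_i \delta_{P_i}$ with $w_i \ge 0$ and $\sum_i w_i = 1$, the hypothesis is that $Q$ minimizes
\begin{equation*}
F(Q') := \E_{P\sim\mathbf{P}}[\mathcal{W}_2(P,Q')] = \sum_{i=1}^n w_i \, \mathcal{W}_2(P_i,Q')
\end{equation*}
over $\mathcal{P}_2(\mathcal{K})$, and moreover $Q \in \mathcal{P}^{a.c}_2(\mathcal{K})$ with $Q \ne P_i$ for every $i$. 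The last two conditions are precisely what is needed for each summand, and therefore $F$, to be smooth at $Q$ in the Wasserstein sense.

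The first step is the first-variation computation. For a bounded continuous vector field $\mathbf{u}:\R^d\to \R^d$, consider the perturbation $Q_t := (\mathrm{I}+t\mathbf{u})_\# Q$. By the standard first-variation formula in Wasserstein geometry (see \cite{AmbrosioGradient}, Chapter 8), using that $Q\in \mathcal{P}_2^{a.c.}(\R^d)$ so that $T_{Q,P_i}$ is well defined,
\begin{equation*}
\frac{d}{dt}\bigg|_{t=0}\frac{1}{2}\mathcal{W}_2^2(P_i,Q_t) = \langle \mathbf{u},\, \mathrm{I}-T_{Q,P_i}\rangle_{L^2(Q)}.
\end{equation*}
Since $\mathcal{W}_2(P_i,Q)>0$ by the assumption $Q\ne P_i$, the chain rule gives
\begin{equation*}
\frac{d}{dt}\bigg|_{t=0}F(Q_t) = \left\langle \mathbf{u},\, \sum_{i=1}^n w_i\,\frac{\mathrm{I}-T_{Q,P_i}}{\mathcal{W}_2(P_i,Q)}\right\rangle_{L^2(Q)}.
\end{equation*}

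The second step exploits that admissible perturbations are rich enough to force this gradient to vanish. Since $Q$ is absolutely continuous, $Q(\partial \mathcal{K})=0$, so $Q$-almost every $\x$ lies in the interior of $\mathcal{K}$; hence for any bounded $\mathbf{u}$ the map $\mathrm{I}+t\mathbf{u}$ sends $\mathrm{supp}(Q)$ into $\mathcal{K}$ for $|t|$ small enough, and $Q_t \in \mathcal{P}_2(\mathcal{K})$. The minimality of $Q$ then yields that the directional derivative above is zero for every such $\mathbf{u}$, and by density of bounded continuous vector fields in $L^2(Q;\R^d)$ I conclude
\begin{equation*}
\sum_{i=1}^n w_i\,\frac{\x - T_{Q,P_i}(\x)}{\mathcal{W}_2(P_i,Q)} = 0 \qquad \text{for $Q$-almost every } \x.
\end{equation*}
Substituting this identity into Definition~\ref{Wdepth_definition} makes the inner $L^2(Q)$ norm vanish, yielding $\mathrm{SD}(Q;\mathbf{P})=1$.

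The main obstacle is the rigorous justification of the first-variation identity and of the density/admissibility argument rather than any conceptual difficulty. The first-variation formula requires $Q\in \mathcal{P}_2^{a.c.}$ (so that $T_{Q,P_i}$ exists uniquely) and $\mathcal{W}_2(P_i,Q)>0$ (so that $\mathcal{W}_2(P_i,\cdot)$ is differentiable, not merely subdifferentiable, at $Q$); both are guaranteed by hypothesis. The second point, handling the support constraint $\mathrm{supp}(Q')\subseteq \mathcal{K}$, is where care is needed: one must avoid perturbations that push mass outside $\mathcal{K}$. I would dispatch this either by truncating $\mathbf{u}$ near $\partial \mathcal{K}$ and approximating in $L^2(Q)$, or, as sketched above, by using that $Q(\partial\mathcal{K})=0$ so that arbitrary bounded perturbations remain feasible for small $t$.
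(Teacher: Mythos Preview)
Your approach is correct in spirit and genuinely different from the paper's. Both proofs extract a first-order optimality condition for the functional $F(Q')=\E_{P\sim\mathbf{P}}[\mathcal{W}_2(P,Q')]$ at the minimizer $Q$, but they perturb $Q$ in different ways. The paper uses \emph{linear} perturbations $Q+th$ with $h\in\mathcal{M}_0(\mathcal{K})$ absolutely continuous with respect to $Q$, bounds the increment of $\mathcal{W}_2^2$ from above via the Kantorovich dual, and derives a contradiction from a specific choice of $h$ built out of the Kantorovich potentials; this makes the constraint $Q+th\in\mathcal{P}(\mathcal{K})$ automatic. You instead use \emph{push-forward} perturbations $Q_t=(\mathrm{I}+t\mathbf{u})_\#Q$ and the Otto-calculus first variation; your route is more geometric and arguably more transparent, since the vanishing of $s=\sum_i w_i(\mathrm{I}-T_{Q,P_i})/\mathcal{W}_2(P_i,Q)$ in $L^2(Q)$ drops out directly without the potential-based construction. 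Note also that your argument only needs the elementary super-differentiability inequality $\mathcal{W}_2^2(Q_t,P_i)\le\int\|\x+t\mathbf{u}(\x)-T_{Q,P_i}(\x)\|^2\mathrm{d}Q(\x)$, not full differentiability.

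One point does need repair. Your claim that ``for any bounded $\mathbf{u}$ the map $\mathrm{I}+t\mathbf{u}$ sends $\mathrm{supp}(Q)$ into $\mathcal{K}$ for $|t|$ small enough'' is false in general: if $\mathrm{supp}(Q)$ accumulates at $\partial\mathcal{K}$ there is no uniform $t$ that works, even though $Q(\partial\mathcal{K})=0$. Your truncation alternative, however, is the right fix: restrict to vector fields $\mathbf{u}\in C_c(\mathrm{int}\,\mathcal{K};\R^d)$, for which a uniform $t>0$ keeping $Q_t\in\mathcal{P}(\mathcal{K})$ exists, and observe that since $Q(\partial\mathcal{K})=0$ these are still dense in $L^2(Q;\R^d)$. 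With that adjustment your argument is complete.
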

\begin{Remark}
	We do not know if the set of spatial medians which are absolutely continuous with respect to Lebesgue measure is nonempty. It is known that,  under the setting of Theorem~\ref{Theo:RelationSpatial}, if we assume that $P_i\in \mathcal{P}^{a.c}_2(\mathcal{K})$,  the set of geometric means (or barycenters) is a singleton and its unique element belongs to $\mathcal{P}^{a.c}_2(\mathcal{K})$ (see \cite{AlvarezEsteban2016}). However, the proof of \cite{AlvarezEsteban2016}, based on a fixed point argument which exploits the strict convexity of the squared Wasserstein distance, does not apply to our setting. 
\end{Remark}
\begin{Remark}
	The existence of a point with WSD equal to one has been shown in \cite{you2025wasserstein}.
\end{Remark}

\subsection{Robustness properties} \label{subsection:robustness}
Two measures of robustness are popular in the literature. The first one is the breakdown point and the second is the influence function (see \cite{Hampel.2005.book,Huber.2009.book}).

Let ${\bf P}_n$ be the empirical distribution of $P_1, \dots, P_n \stackrel{i.i.d.}{\sim} {\bf P}$. 
In this section we show that the empirical depth regions 
$$ \mathcal{R}(\alpha;{\bf P}_n)= \{ Q\in \mathcal{P}_2^{a.c}(\R^d):\ { \rm WSD}(Q;{\bf P}_n) \ge \alpha \}, \quad \alpha\in (0,1],$$
and the deepest points 
$$ {\rm m}(\alpha;{\bf P}_n)=  \{ Q\in \mathcal{P}_2^{a.c}(\R^d):\ { \rm WSD}(Q;{\bf P}_n)=1 \},$$
which correspond to the region for $\alpha=1$,
are robust in terms of the breakdown point. Then we show that the influence function of the depth functional is bounded. Now we define our notion of breakdown point and provide upper and lower bounds for it. We note that the deepest points have breakdown point above $1/2$ irrespective of the uniqueness of the spatial median and, even more, irrespective of the fact that the spatial median agrees with the deepest points.
\begin{Definition}
	The breakdown point of $\mathcal{R}(\alpha;{\bf P}_n)$ is the value 
	$$ {\rm BP}(\mathcal{R}(\alpha;{\bf P}_n)) = \frac{1}{n}\inf\left\{ \ell \in \{1, \dots, n\}:\  \sup_{{\bf Q}_{n}\in \mathcal{P}({\ell,n})}d_H \left(\mathcal{R}(\alpha; {\bf P}_n) ,\mathcal{R}(\alpha;{\bf Q}_{n})\right)=+\infty  \right\}, $$
	where $\mathcal{P}({\ell,n})$ denotes the set of probability  measures supported on $n$ points (with weights $1/n$)  sharing at least $n-\ell$ with ${\bf P}_n$. Here $d_H$ denotes the Hausdorff distance, defined for $A,B\subset \mathcal{P}_2(\R^d)$ as 
	\[
	d_H(A, B) = \max\left\{
	\sup_{P \in A} \inf_{Q \in B} \mathcal{W}_2(P, Q), \;
	\sup_{P \in B} \inf_{Q \in A} \mathcal{W}_2(P, Q)
	\right\}.
	\]
\end{Definition}
In the Euclidean setting, it was recently shown by Konen and  Paindaveine \cite{Dimitri}  that the spatial depth region (or contour) of order $\alpha\in (0,1] $ has breakdown point $ \alpha/2  $ (up to the order $1/n$). The following result shows that the same happens for the WSD. Note that, even though Lemma \ref{lemma:breakdown} provides a similar statement as in \cite{Dimitri}, the proof of \cite{Dimitri} cannot be adapted to our setting. Indeed, \cite{Dimitri} considers quantile contours defined as minimizers of a pinball-type loss, while our depth definition cannot be written as a minimization problem.

\begin{Lemma}\label{lemma:breakdown}
	For every $\alpha\in (0,1]$, we have
	\[ {\rm BP}(\mathcal{R}(\alpha;{\bf P}_n)) 
	\ge \frac{\alpha}{2}
	\]
	and for every $\alpha\in (0,1 - \frac{2}{n} ]$ (for $n \ge 3$), we have
	\[
	{\rm BP}(\mathcal{R}(\alpha;{\bf P}_n)) \le
	\frac{\alpha}{2} + \frac{1}{n}. 
	\]
\end{Lemma}

\begin{Remark}
	The breakdown point in the two--stage sampling model (see Section \ref{two_stg_sampling}) is arbitrary low in the subsample size $m$. Indeed, perturbing  one point of each of the columns of \eqref{eq:dataTensor} can imply that all of the empirical measures   $P_{i,m}:=\frac{1}{m}\sum_{j=1}^m\delta_{\X_{i,j} }$ diverge. In such a case, we recommend the reader the recent articles \cite{AvellaGonzalezSanz.2024} and \cite{paindaveine.2024.robustnesssemidiscreteoptimaltransport} to construct trimmed estimators of the optimal transport maps.  
\end{Remark}
The other important measure of robustness is the so-called influence function introduced by  Hampel \citep{Hampel1974infuence}. In our setting, the influence curve of the WSD at a point $Q \in  \mathcal{P}^{a.c}_2(\R^d) $ is the function 
$$\mathcal{P}_2(\R^d) \ni  \mu\mapsto  {\rm IC}( \mu, { \rm WSD}(Q;{\bf P} )) =\lim_{t\to 0^+}\frac{{ \rm WSD}(Q;{\bf P}+ t(\delta_\mu-{\bf P}) )- { \rm WSD}(Q;{\bf P} ) }{t}.$$
A functional is said to be robust if the gross error sensitivity is bounded, i.e., if 
$$ \sup_{\mu
	\in \mathcal{P}_2(\R^d)
} |{\rm IC}( \mu, {\rm WSD}(Q;{\bf P} ))| <\infty. $$
In the following result we show that the influence curve exists and is bounded. 
\begin{Lemma}\label{lemma:influence}
	Let  $\mathbf{P}\in \mathcal{P}(\mathcal{P}_2(\R^d)) $ be atomless.
	Let $Q\in  \mathcal{P}^{a.c}_2(\R^d)$.
	Then it follows that, if  ${\rm WSD}(Q;{\bf P})\neq 1$, 
	for any $\mu \in \mathcal{P}_2(\R^d)$,
	$${\rm IC}( \mu, {\rm WSD}(Q;{\bf P} ))= -\frac{\int
		\left\langle
		\E_{P\sim \mathbf{P}}\left[  \frac{\x-T_{Q,P}(\x) }{\mathcal{W}_2(P,Q)}\right],
		\left(  \frac{\x-T_{Q,\mu}(\x) }{\mathcal{W}_2(\mu,Q)}- \E_{P\sim \mathbf{P}}\left[  \frac{\x-T_{Q,P}(\x)  }{\mathcal{W}_2(P,Q)}\right] \right)
		\right\rangle
		dQ(\x)}{\left( \int \left\| \E_{P\sim \mathbf{P}}\left[  \frac{\x-T_{Q,P}(\x) }{\mathcal{W}_2(P,Q)}\right] \right\|^2 dQ(\x) \right)^{\frac{1}{2}}}  $$
	and if   ${\rm WSD}(Q;{\bf P})=1$,
	$${\rm IC}( \mu, {\rm WSD}(Q;{\bf P} ))= -\left(\int  \left\|  \frac{\x-T_{Q,\mu}(\x) }{\mathcal{W}_2(\mu,Q)}- \E_{P\sim \mathbf{P}}\left[  \frac{\x-T_{Q,P}(\x)  }{\mathcal{W}_2(P,Q)}\right] \right\|^2 dQ(\x)\right)^{\frac{1}{2}} . $$
	As a consequence, 
	$\displaystyle \sup_{\mu \in 
		\mathcal{P}_2(\R^d)   
	} |{\rm IC}( \mu, {\rm WSD}(Q;{\bf P} ))| \leq 2$. 
\end{Lemma}

\subsection{Continuity}\label{Sec:Continuity}
In this section we investigate some topological properties of the WSD.
We analyze separately the functions $Q\mapsto  {\rm WSD}(Q; {\bf P})$ and ${\bf P}\mapsto  {\rm WSD}(Q; {\bf P})$. The following result shows that the function $\mathcal{P}_2^{a.c}(\R^d)\ni Q\mapsto  {\rm WSD}(Q; {\bf P})$ is continuous.  
\begin{Theorem}\label{Theo:COnti}
	Let  $\mathbf{P}\in \mathcal{P}(\mathcal{P}_2(\R^d)) $ be atomless  and $\{Q_n\}_{n\in \N}\subset \mathcal{P}_2^{a.c}(\R^d)$  be a sequence such that $\mathcal{W}_2(Q_n,Q)\to 0$ for some $Q\in \mathcal{P}_2^{a.c}(\R^d)$. Then $$\lim_{n\to \infty}  {\rm WSD}(Q_n; {\bf P}) =  {\rm WSD}(Q; {\bf P}) .$$ 
\end{Theorem}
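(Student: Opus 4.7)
The plan is to reduce continuity of $\mathrm{SD}(\,\cdot\,; \mathbf{P})$ at $Q$ to continuity of
\begin{equation*}
G(Q) := \left\|\E_{P \sim \mathbf{P}}\left[\frac{{\rm I} - T_{Q,P}}{\mathcal{W}_2(Q,P)}\right]\right\|_{L^2(Q)}^{2} = \iint c_Q(P_1, P_2)\, \mathrm{d}\mathbf{P}(P_1)\, \mathrm{d}\mathbf{P}(P_2),
\end{equation*}
where $v_Q(P) := ({\rm I} - T_{Q,P})/\mathcal{W}_2(Q,P) \in L^2(Q;\R^d)$ and $c_Q(P_1, P_2) := \langle v_Q(P_1), v_Q(P_2)\rangle_{L^2(Q)}$. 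Since $\|v_Q(P)\|_{L^2(Q)} \leq 1$, Cauchy--Schwarz gives the uniform bound $|c_{Q_n}(P_1, P_2)| \leq 1$, and dominated convergence on $\mathbf{P}\otimes\mathbf{P}$ reduces the task to proving pointwise convergence $c_{Q_n}(P_1,P_2) \to c_Q(P_1,P_2)$ for $(\mathbf{P}\otimes\mathbf{P})$-a.e.\ $(P_1,P_2)$. The atomless hypothesis enters exactly here: it forces $\mathbf{P}(\{Q\}) = 0$, so the set $\{(P_1,P_2) : P_1 = Q \text{ or } P_2 = Q\}$, on which the denominators degenerate, is $\mathbf{P}^{\otimes 2}$-null.

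Fix $P_1, P_2 \neq Q$. Continuity of $\mathcal{W}_2$ gives $\mathcal{W}_2(Q_n, P_i) \to \mathcal{W}_2(Q, P_i) > 0$, so it remains to prove $\int f\, \mathrm{d}\sigma_n \to \int f\, \mathrm{d}\sigma$ with
\begin{equation*}
f(\x,\y,\z) := \langle \x-\y,\x-\z\rangle, \quad \sigma_n := ({\rm I}, T_{Q_n, P_1}, T_{Q_n, P_2})_\# Q_n, \quad \sigma := ({\rm I}, T_{Q, P_1}, T_{Q, P_2})_\# Q,
\end{equation*}
viewed as probability measures on $\R^{3d}$. The main obstacle is the joint weak convergence $\sigma_n \to \sigma$: tightness of $\{\sigma_n\}$ follows from tightness of its marginals $Q_n, P_1, P_2$, so every subsequence has a further weakly convergent subsequence with limit $\sigma^*$ of marginals $Q, P_1, P_2$. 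Its $(\x,\y)$-marginal is the weak limit of the optimal couplings $\pi_{Q_n, P_1}$; by the standard stability of optimal transport together with uniqueness of the OT plan from the absolutely continuous $Q$, this limit equals $\pi_{Q, P_1} = ({\rm I}, T_{Q, P_1})_\# Q$, which is supported on the graph of $T_{Q, P_1}$. Hence $\sigma^*$ concentrates on $\{\y = T_{Q, P_1}(\x)\}$, and by the symmetric argument applied to the $(\x,\z)$-marginal, also on $\{\z = T_{Q, P_2}(\x)\}$. These two constraints identify $\sigma^* = \sigma$ uniquely, so tightness together with uniqueness of subsequential limits promotes this to weak convergence of the full sequence.

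To accommodate the quadratic growth of $f$, I upgrade weak convergence to convergence in $\mathcal{W}_2(\R^{3d})$. The $\y$- and $\z$-marginals of $\sigma_n$ are the fixed measures $P_1, P_2$, while $\int \|\x\|^2\, \mathrm{d}Q_n \to \int \|\x\|^2\, \mathrm{d}Q$ by the $\mathcal{W}_2$-convergence of $Q_n$ to $Q$ recalled in Section~\ref{subsec:WassersteinGeom}. Summing, the total second moment of $\sigma_n$ converges to that of $\sigma$, which combined with weak convergence yields $\mathcal{W}_2(\sigma_n, \sigma) \to 0$. Since $f$ is continuous with quadratic growth, $\int f\, \mathrm{d}\sigma_n \to \int f\, \mathrm{d}\sigma$, establishing the pointwise convergence of $c_{Q_n}$ and closing the argument by dominated convergence.
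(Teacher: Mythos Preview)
Your proof is correct and shares the same skeleton as the paper's: expand the squared norm as a double integral over $(P_1,P_2)\sim\mathbf{P}\otimes\mathbf{P}$, establish the joint weak convergence $({\rm I},T_{Q_n,P_1},T_{Q_n,P_2})_\#Q_n\xrightarrow{w}({\rm I},T_{Q,P_1},T_{Q,P_2})_\#Q$ via stability of optimal plans and uniqueness from the absolute continuity of $Q$, and finally pass to the limit in the quadratic integrand.

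Where you diverge is in the handling of integrability. The paper first truncates $\mathbf{P}$ to a compact set $\mathcal{V}_\beta\subset\mathcal{P}_2(\R^d)$ bounded away from $Q$ (using both atomlessness and tightness), then argues via Skorokhod representation and an explicit uniform-integrability estimate for $\langle\X_n-T_{Q_n,P}(\X_n),\X_n-T_{Q_n,P'}(\X_n)\rangle$, which requires the compactness of $\mathcal{V}_\beta$ to control second moments uniformly in $P,P'$. You bypass all of this in two ways: first, integrating out $\x$ before $(P_1,P_2)$ gives the uniform bound $|c_{Q_n}(P_1,P_2)|\le 1$ directly from Cauchy--Schwarz, so no truncation of $\mathbf{P}$ is needed and atomlessness is used only through $\mathbf{P}(\{Q\})=0$; second, upgrading the weak convergence of $\sigma_n$ to $\mathcal{W}_2$-convergence by the elementary observation that the second moments converge (two marginals are fixed, the third is $Q_n\to Q$ in $\mathcal{W}_2$) replaces the Skorokhod/uniform-integrability machinery. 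Your route is shorter and more self-contained; the paper's truncation, on the other hand, mirrors the technique used in its proof of Theorem~\ref{Theorem:contP}, so there is some economy of method across the two continuity results.
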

Next we show the continuity of ${\bf P}\mapsto  {\rm WSD}(Q; {\bf P})$ for fixed $Q$. As an intermediate step we need to show that for each $Q\in \mathcal{P}_{2}^{a.c}(\R^d)$ the function 
$$ T^Q: \mathcal{P}_2(\R^d)\ni P\mapsto T_{Q,P}\in L^2(Q)$$
is continuous. Recall that $T_{Q,P}$ is the optimal transport map from $Q$ to $P$. 

\begin{Lemma}[Continuity of $T^Q$]\label{Stabilty} Set $Q\in \mathcal{P}_2^{a.c}(\R^d)$. Let  $\{P_n\}_n\subset \mathcal{P}_2(\R^d) $ be a sequence of  probability measures such that  $\mathcal{W}_{2}(P_n, P)\to 0 $ for some $P\in \mathcal{P}_2(\R^d)$. Then $$\| T_{Q,P_n} - T_{Q,P}\|_{L^2(Q)} \to 0.$$ In words, $T^Q:\mathcal{P}_{2}(\R^d)\to L^2(Q)$ is continuous.
\end{Lemma}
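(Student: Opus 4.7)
The plan is to reduce the claimed $L^2(Q)$-convergence of the transport maps to a Wasserstein convergence of the associated optimal plans on $\R^d \times \R^d$. Set $\pi_n := ({\rm I} \times T_{Q,P_n})_\# Q \in \Pi(Q,P_n)$ and $\pi := ({\rm I}\times T_{Q,P})_\# Q \in \Pi(Q,P)$; since $Q \in \mathcal{P}^{a.c}_2(\R^d)$, Brenier's theorem guarantees that $\pi_n$ and $\pi$ are the \emph{unique} optimal plans for their respective pairs of marginals.

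First I would show that $\mathcal{W}_2(\pi_n, \pi) \to 0$ in $\mathcal{P}_2(\R^d\times\R^d)$. Tightness of $\{\pi_n\}_n$ follows from tightness of its marginals $Q$ and $\{P_n\}_n$, and the uniform bound on second moments
\[
\int (\|\x\|^2+\|\y\|^2)\,\mathrm{d}\pi_n(\x,\y) = \int \|\x\|^2\,\mathrm{d} Q(\x) + \int \|\y\|^2\,\mathrm{d} P_n(\y)
\]
is granted by $\mathcal{W}_2(P_n,P) \to 0$. Any weak subsequential limit $\pi^\ast$ lies in $\Pi(Q,P)$, and since $\int \|\x - \y\|^2\,\mathrm{d}\pi_n = \mathcal{W}_2^2(Q,P_n) \to \mathcal{W}_2^2(Q,P)$, the lower semicontinuity of the transport cost forces $\pi^\ast$ to be optimal. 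Brenier's uniqueness then gives $\pi^\ast = \pi$, so the entire sequence converges weakly; joint convergence of second moments upgrades this to $\mathcal{W}_2$-convergence.

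Next I would expand
\[
\|T_{Q,P_n} - T_{Q,P}\|_{L^2(Q)}^2 = \|T_{Q,P_n}\|_{L^2(Q)}^2 + \|T_{Q,P}\|_{L^2(Q)}^2 - 2\langle T_{Q,P_n}, T_{Q,P}\rangle_{L^2(Q)},
\]
and note that $\|T_{Q,P_n}\|_{L^2(Q)}^2 = \int \|\y\|^2\,\mathrm{d} P_n(\y) \to \|T_{Q,P}\|_{L^2(Q)}^2$ by the convergence of second moments under $\mathcal{W}_2$. It thus suffices to show $\int \langle \y, T_{Q,P}(\x)\rangle\,\mathrm{d}\pi_n(\x,\y) \to \|T_{Q,P}\|_{L^2(Q)}^2$. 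Since $T_{Q,P}$ is only $L^2(Q)$, I would approximate it in $L^2(Q)$ by a sequence $\{f_k\}_k \subset \mathcal{C}^{\infty}_c(\R^d,\R^d)$. For each fixed $k$, the map $(\x,\y)\mapsto \langle \y, f_k(\x)\rangle$ is continuous with at most quadratic growth, so the $\mathcal{W}_2$-convergence of $\pi_n$ to $\pi$ yields $\int \langle \y, f_k(\x)\rangle\,\mathrm{d}\pi_n \to \langle T_{Q,P}, f_k\rangle_{L^2(Q)}$. Cauchy-Schwarz controls the approximation error uniformly in $n$:
\[
\Big| \int \langle \y, T_{Q,P}(\x) - f_k(\x)\rangle\,\mathrm{d}\pi_n \Big| \leq \|T_{Q,P} - f_k\|_{L^2(Q)} \Big(\int \|\y\|^2\,\mathrm{d} P_n\Big)^{1/2},
\]
whose right-hand side is bounded in $n$ and vanishes as $k\to\infty$. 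A diagonal argument then closes the inner product limit and hence the proof.

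The main obstacle is the first step: the $\mathcal{W}_2$-convergence of plans combines the standard stability-of-optimality principle for optimal transport with Brenier's uniqueness, which relies essentially on the absolute continuity of $Q$. The cross-term analysis in the last step is itself delicate only in that one must have already upgraded weak convergence of the plans to $\mathcal{W}_2$-convergence, so that arbitrary $L^2(Q)$ test functions such as $T_{Q,P}$ can be integrated against $\pi_n$ in the limit via the density of smooth compactly supported functions.
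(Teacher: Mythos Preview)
Your proof is correct and follows a genuinely different route from the paper's argument. Both proofs share the expansion
\[
\|T_{Q,P_n}-T_{Q,P}\|_{L^2(Q)}^2=\|T_{Q,P_n}\|_{L^2(Q)}^2+\|T_{Q,P}\|_{L^2(Q)}^2-2\langle T_{Q,P_n},T_{Q,P}\rangle_{L^2(Q)}
\]
together with $\|T_{Q,P_n}\|_{L^2(Q)}^2=\int\|\y\|^2\,\mathrm{d}P_n\to\|T_{Q,P}\|_{L^2(Q)}^2$, so the task reduces to the cross term. The paper invokes \cite[Corollary~5.23]{villani2008optimal} to obtain convergence of $T_{Q,P_n}$ to $T_{Q,P}$ in $Q$-probability, uses Banach--Alaoglu for weak $L^2(Q)$-compactness, and then upgrades to strong $L^{3/2}(Q)$-convergence via Vitali's theorem to identify the weak limit as $0$; weak convergence of $T_{Q,P_n}$ to $T_{Q,P}$ in $L^2(Q)$ then dispatches the cross term directly. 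You instead work at the level of the optimal plans $\pi_n=({\rm I}\times T_{Q,P_n})_\#Q$, establish $\mathcal{W}_2$-convergence of $\pi_n$ to $\pi$ in $\mathcal{P}_2(\R^{2d})$ from stability of optimal couplings and Brenier's uniqueness, and recover the cross term by approximating $T_{Q,P}$ in $L^2(Q)$ with $\mathcal{C}^\infty_c$ functions and using a uniform Cauchy--Schwarz bound. Your route avoids the $L^{3/2}$/Vitali detour and relies only on standard stability of optimal plans and a density argument, which is arguably more transparent; the paper's route, on the other hand, delivers weak $L^2(Q)$-convergence of the maps themselves as an intermediate byproduct, at the price of appealing to an external pointwise-stability result.
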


Fix $Q\in \mathcal{P}_2^{a.c}(\R^d)$.  Lemma~\ref{Stabilty} implies that the function 
$$ \mathcal{P}_{2}(\R^d)\ni  P\mapsto  \frac{I-T_{Q,P} }{\mathcal{W}_2(P,Q)} \in  L^2(Q) $$
is  continuous around all $P\neq Q$. This observation enables to derive the continuity of the function 
$\mathcal{P}(\mathcal{P}_{2}(\R^d)) \ni {\bf P}\mapsto  {\rm WSD}(Q; {\bf P})$ around  atomless probability measures. 

\begin{Theorem}\label{Theorem:contP}
	Let $\mathbf{P}\in \mathcal{P}(\mathcal{P}_{2}(\R^d)) $ be atomless and let $Q\in \mathcal{P}_2^{a.c}(\R^d)$. 
	Then 
	$$\lim_{n\to \infty}  {\rm WSD}(Q; {\bf P}_n) = {\rm WSD}(Q; {\bf P}) $$
	for every sequence $\{{\bf P}_n\}_{n\in \N}\subset \mathcal{P}(\mathcal{P}_{2}(\R^d))$ such that 
	$ {\bf P}_n \rightarrow {\bf P}$ weakly in $\mathcal{P}(\mathcal{P}_{2}(\R^d))$.
\end{Theorem}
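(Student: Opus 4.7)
The plan is to rewrite $\mathrm{SD}(Q;\mathbf{P})=1-\|F(\mathbf{P})\|_{L^2(Q)}$, where $F(\mathbf{P}):=\int \Phi(P)\,\mathrm{d}\mathbf{P}(P)\in L^2(Q)$ is a Bochner integral of the integrand
$$
\Phi(P):=\frac{{\rm I}-T_{Q,P}}{\mathcal{W}_2(P,Q)}\ \text{for }P\neq Q,\qquad \Phi(Q):=0.
$$
That $\|F(\mathbf{P})\|_{L^2(Q)}$ agrees with the expression in Definition~\ref{Wdepth_definition} is a direct Fubini argument. Since $\|\Phi(P)\|_{L^2(Q)}=1$ whenever $P\neq Q$, the integrand is bounded by $1$. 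Lemma~\ref{Stabilty} together with the continuity of $P\mapsto\mathcal{W}_2(P,Q)$ shows that $\Phi$ is continuous on the open set $\mathcal{P}_2(\R^d)\setminus\{Q\}$, and hence Borel measurable on the Polish space $(\mathcal{P}_2(\R^d),\mathcal{W}_2)$; separability of $L^2(Q)$ then ensures $\Phi$ is Bochner integrable against every $\mathbf{P}\in\mathcal{P}(\mathcal{P}_2(\R^d))$. By the reverse triangle inequality, the theorem reduces to showing $\|F(\mathbf{P}_n)-F(\mathbf{P})\|_{L^2(Q)}\to 0$.

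The atomless assumption on $\mathbf{P}$ gives $\mathbf{P}(\{Q\})=0$, so $\Phi$ is continuous at $\mathbf{P}$-almost every $P$. Skorokhod's representation theorem applied on the Polish space $(\mathcal{P}_2(\R^d),\mathcal{W}_2)$ furnishes, on a common probability space, random probability measures $\tilde P_n\sim\mathbf{P}_n$ and $\tilde P\sim\mathbf{P}$ with $\mathcal{W}_2(\tilde P_n,\tilde P)\to 0$ almost surely. On the almost-sure event $\{\tilde P\neq Q\}$, the continuity of $\Phi$ away from $Q$ yields $\|\Phi(\tilde P_n)-\Phi(\tilde P)\|_{L^2(Q)}\to 0$ almost surely.

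Finally, since $\|\Phi(\tilde P_n)-\Phi(\tilde P)\|_{L^2(Q)}\le 2$ uniformly, bounded convergence produces $\E\|\Phi(\tilde P_n)-\Phi(\tilde P)\|_{L^2(Q)}\to 0$, and Jensen's inequality for Bochner integrals then delivers
$$
\|F(\mathbf{P}_n)-F(\mathbf{P})\|_{L^2(Q)} = \bigl\|\E[\Phi(\tilde P_n)-\Phi(\tilde P)]\bigr\|_{L^2(Q)} \le \E\|\Phi(\tilde P_n)-\Phi(\tilde P)\|_{L^2(Q)} \longrightarrow 0,
$$
which closes the argument. The only delicate point is the discontinuity of $\Phi$ at the single point $P=Q$: this is precisely what the atomless hypothesis on $\mathbf{P}$ neutralises, since the Skorokhod coupling then avoids $\{\tilde P=Q\}$ with probability one. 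Everything else is a direct application of Lemma~\ref{Stabilty} and standard Bochner integration tools.
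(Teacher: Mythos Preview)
Your proof is correct and considerably more streamlined than the paper's. Both approaches hinge on the same two ingredients: the boundedness $\|\Phi(P)\|_{L^2(Q)}\le 1$ and the continuity of $\Phi$ on $\mathcal{P}_2(\R^d)\setminus\{Q\}$ (Lemma~\ref{Stabilty}), with the atomless hypothesis disposing of the single point of discontinuity. The paper, however, proceeds by an explicit truncation: it localises to a compact set $V\subset\mathcal{P}_2(\R^d)$ bounded away from $Q$ (using tightness of the sequence $\{{\bf P}_n\}$ and the atomless assumption), bounds the remainder over $V^c$ by $\epsilon$, proves weak convergence of the $L^2(Q)$-valued integral over $V$ by testing against each $h\in L^2(Q)$, and finally upgrades weak to strong convergence via the observation that the integrals lie in the closed convex hull of a compact set. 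Your use of Skorokhod's representation theorem plus dominated convergence bypasses both the truncation and the weak-to-strong upgrade in one stroke, at the modest cost of invoking Bochner integration and Skorokhod's theorem on the Polish space $(\mathcal{P}_2(\R^d),\mathcal{W}_2)$. The paper's argument is more self-contained in that it avoids Skorokhod, but yours is shorter and arguably more transparent.
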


\section{Consistent estimation} \label{consistency}
In practice, we only observe sample datasets instead of knowing the true ${\bf P}$ or even the true $P_1, P_2, \ldots, P_n \sim {\bf P}$.
Two common scenarios in the literature of distributional data learning \citep{bachoc2023improved, meunier2022distribution,szabo2016learning} will be considered, namely, {\it one-stage sampling model} and {\it two-stage sampling model}.
One-stage sampling model assumes the observation of an \textit{i.i.d.}\ sample $P_1, \dots, P_n$ of ${\bf P}$. Two-stage sampling model assumes the observation of a data array
\begin{equation}\label{eq:dataTensor}
	\left(\begin{array}{ccc}
		\X_{1,1}  & \dots & \X_{1,m} \\
		\vdots  & \vdots & \vdots\\
		\X_{n,1}  & \dots & \X_{n,m}
	\end{array}\right),
\end{equation}
where $\X_{i,1}, \dots, \X_{i,m} \in \R^d$ is an \textit{i.i.d.} sample from the distribution $P_i$ for each $i=1, \dots, n$, and $P_1, \dots, P_n$ are \textit{i.i.d.} drawn from ${\bf P}$. 
The difference between the two models is that the sampled distributions $P_1, \ldots, P_n$ are known in one-stage sampling model, but unknown and to be estimated by the empirical distributions in two-stage sampling model. Note that the sample sizes drawn from different $P_i$'s can be different without creating technical or computational difficulties. We set them to be the same $m$ just for simpler notations.

In each scenario, we give the empirical counterpart to the population WSD in Definition~\ref{Wdepth_definition}.  
We also establish a point-wise central limit theorem for the empirical WSD under the {\it one-stage sampling model} and a consistency result
and rates of convergence 
for the   {\it two-stage sampling model}.

\subsection{One-stage sampling}\label{one_stg_sampling}
We describe the asymptotic behavior of the empirical WSD 
\begin{equation}\label{one-stage-sampling}
	{\rm WSD}(Q; {\bf P}_n):=1-  \left(\int{ \left\|\frac{1}{n}\sum_{i=1}^n \left[  \frac{\x-T_{Q,P_i}(\x) }{\mathcal{W}_2(Q, P_i)}\right]\right\|^2} \mathrm{d}Q(\x)\right)^{\frac{1}{2}}, \quad {\bf P}_n =\frac{1}{n}\sum_{i=1}^n \delta_{P_i} ,
\end{equation}
where $\mathbf{P}_n$ is the empirical counterpart to $\mathbf{P}$.  
The WSD is associated with the spatial distribution process 
$$ S_{{\bf P}}:  \mathcal{P}^{a.c}_2(\R^d)\ni Q \mapsto S_{{\bf P},Q}=\E_{P\sim \mathbf{P}}\left[  \frac{I-T_{Q,P} }{\mathcal{W}_2(P,Q)}\right] \in L^2(Q).$$
The representation 
$$ S_{{\bf P},Q} =\E_{P\sim \mathbf{P}}\left[  \frac{I-T_{Q,P} }{\|I-T_{Q,P}\|_{L^2(Q)}}\right]  $$
allows to use 
standard techniques to obtain the point-wise  strong law of large numbers and a central limit theorem for
the empirical spatial distribution process
$\left(S_{{\bf P}_n,Q }-S_{{\bf P},Q} \right) \in L^2(Q),  $
after showing that the random function $T_{Q,P} $ in $L^2(Q)$ is tight.
Recall that a probability measure  $\mu\in \mathcal{P}(\mathcal{X})$ over a separable topological space $(\mathcal{X},d)$  is said to be tight if for every $\epsilon>0$ there exists a compact (in the metric topology) set $K$  such that $\mu(K)\geq 1-\epsilon.$  A random variable is tight if its distribution is tight.     
The space $\mathcal{P}(\mathcal{P}_{2}(\R^d))$
endowed with the distance
$\mathcal{W}_2$
is Polish from \cite[Proposition 7.1.5]{AmbrosioGradient}.
Hence from Ulam's theorem, any
fixed ${\bf P}\in \mathcal{P}(\mathcal{P}_{2}(\R^d))$ is tight.
Therefore, Lemma~\ref{Stabilty} implies that $T_{Q,P}$ is tight in  $L^2(Q)$.  

As a consequence, $\left\{\frac{I-T_{Q,P_i} }{\|I-T_{Q,P_i}\|_{L^2(Q)}}\right\}_{i=1}^n$ is an \textit{i.i.d.} sequence of tight random elements in the separable Hilbert space $L^2(Q)$, with finite second order moments.  The strong law of large numbers and the central limit theorem in separable Hilbert spaces (cf.\ \cite[Corollary~10.9]{Ledoux1991}) yield the following result. 
Note that a random element $Z$ of a Hilbert space $\mathcal{H}$ is defined to be Gaussian when $h(Z)$ follows a (univariate) Gaussian distribution for all linear continuous mappings $h : \mathcal{H} \to \mathbb{R}$.

\begin{Theorem}\label{Theorem:Glivenk}
	Set  $\mathbf{P}\in \mathcal{P}(\mathcal{P}_2(\R^d)) $, $Q\in \mathcal{P}^{a.c}_2(\R^d) $. Then 
	\begin{equation} \label{eq:LLN:one}
		\|S_{{\bf P}_n,Q } -S_{{\bf P},Q }\|_{L^2(Q)} \xrightarrow{a.s} 0 
	\end{equation}
	and 
	$$ \sqrt{n}(S_{{\bf P}_n,Q } -S_{{\bf P},Q }) \xrightarrow{\mathcal{P}(L^2(Q))}\mathbb{G}_{{\bf P},Q}, $$
	for some centered Gaussian element $\mathbb{G}_{{\bf P},Q}\in L^2(Q) $. 
	As a consequence, it holds that
	\begin{equation} \label{eq:LLN:two}
		{\rm WSD}(Q; {\bf P}_n )\xrightarrow{a.s}  {\rm WSD}(Q; {\bf P} )
	\end{equation}
	and, if $ {\rm WSD}(Q; {\bf P} )<1$,  also
	\begin{equation} \label{eq:CLT}
		\sqrt{n}( {\rm WSD}(Q; {\bf P}_n )- {\rm WSD}(Q; {\bf P} ))\xrightarrow{\mathcal{P}(\mathbb{R})} \frac{\langle \mathbb{G}_{{\bf P},Q}, S_{{\bf P},Q } \rangle_{L^2(Q)} }{ {\rm WSD}(Q; {\bf P} )-1}. 
	\end{equation}
\end{Theorem}
The last two statements of Theorem~\ref{Theorem:Glivenk} are a mere application of the delta method.

Now, we provide a consistent estimator of the asymptotic variance in \eqref{eq:CLT}. We define
\[
\widehat{\sigma}_{n,Q}^2
=
\frac{
	\frac{1}{n}\sum_{i=1}^n
	\left( 
	\langle
	\xi_i
	,
	S_{{\bf P}_n,Q }
	\rangle_{L^2(Q)}
	\right)^2
	-
	\left\| 
	S_{{\bf P}_n,Q }
	\right\|_{L^2(Q)}^4
}{
	\left(
	{\rm WSD}(Q; {\bf P}_n ) - 1
	\right)^2
},
\]
where
\[
\xi_i = 
\frac{I-T_{Q,P_i} }{\|I-T_{Q,P_i}\|_{L^2(Q)}}
\in L^2(Q).
\]

\begin{Lemma} \label{lem:asympt:var:CLT}
	Let  $\mathbf{P}\in \mathcal{P}(\mathcal{P}_2(\R^d)) $ and $Q\in \mathcal{P}^{a.c}_2(\R^d) $ be such that $ {\rm WSD}(Q; {\bf P} )<1$. Then, 
	\[
	\widehat{\sigma}_{n,Q}^2
	\xrightarrow{a.s} 
	\sigma_{Q}^2
	: = 
	\Var
	\left( 
	\frac{\langle \mathbb{G}_{{\bf P},Q}, S_{{\bf P},Q } \rangle_{L^2(Q)} }{ {\rm WSD}(Q; {\bf P} )-1}
	\right).
	\]
\end{Lemma}

\subsection{Two-stage sampling}\label{two_stg_sampling}
Now we deal with the scenario where only the data array \eqref{eq:dataTensor}  is available. Recall that, in this case, the \textit{i.i.d.}  samples  $P_1, \dots,P_n $ of ${\bf P}$ are no longer observed but a sample $\{\X_{i,j}\}_{i,j=1}^{n,m}$ is, where $\X_{i,j}\sim P_i$ for $j\in \{1, \dots, m\}$ and each $i\in \{1, \dots, n\}$ .  
We denote 
\begin{equation}
	\label{ModelTwo}
	{\bf P}_{n,m}:=\frac{1}{n}\sum_{i=1}^n \delta_{P_{i,m}}, \quad \text{with} \quad    P_{i,m}:=\frac{1}{m}\sum_{j=1}^m\delta_{\X_{i,j} }\quad \text{for each}\quad  i\in \{1, \dots, n\}.
\end{equation}
Correspondingly, the empirical WSD is formulated as 
\begin{equation*}
	{\rm WSD}(Q_m; {\bf P}_{n,m}) = 1- \sqrt{ \frac{1}{m} \sum_{j=1}^m  \bigg\| \frac{1}{n}\sum_{i=1}^n \frac{\X_{q,j}-T_{Q_m, P_{i,m}}(\X_{q,j})}{\mathcal{W}_2(Q_m, P_{i,m})} \bigg\|^2 },
\end{equation*}
where $Q_m = \frac{1}{m}\sum_{j=1}^m \delta_{\X_{q,j}}$ with $\X_{q,1} , \ldots , \X_{q,m} \stackrel{\textit{i.i.d.}}{\sim} Q$, and the convention $\frac{\mathbf{0}}{0}=\mathbf{0}$ remains.
Here we use the convention $q=n+1$ for convenience.
Now we show that, as $n,m\to \infty$, 
$   {\bf P}_{n,m} $ converges in probability in  $  \mathcal{P}(\mathcal{P}_p(\R^d)) $ for all $p\geq 1$. We endow $\mathcal{P}(\mathcal{P}_{p}(\R^d))$ with the metric 
\begin{multline*}
	d_{{\rm BL}^{(p)}}({\bf P}, {\bf Q}) = \sup\bigg\{ \int f(P)\mathrm{d}({\bf P}- {\bf Q})(P): \   |f(P)|\leq 1\ {\rm and}\\ |f(P)-f(Q) |\leq \mathcal{W}_{p}(P,Q), \ \forall \, P,Q\in \mathcal{P}_{p}(\R^d)\bigg\}.
\end{multline*}
\begin{Lemma}\label{Lemma:ConvergenceBL}
	Let $\{{\bf P}_{n,m}\}_{n\in \N}$ be as in \eqref{ModelTwo} where $m=m(n)$ is such that $m\to \infty$ as $n\to \infty$. Assume that ${\bf P}\in \mathcal{P}(\mathcal{P}_{p}(\R^d))$ for $p\geq 1$.  Then 
	$$ \E[d_{{\rm BL}^{(p)}}({\bf P}_{n,m}, {\bf P})] \longrightarrow 0 \quad \text{as } n\to \infty.$$
\end{Lemma}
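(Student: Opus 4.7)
The strategy is a triangle-inequality splitting: introducing the (unobserved) one-stage empirical measure ${\bf P}_n := \frac{1}{n}\sum_{i=1}^n \delta_{P_i}$ defined on the same probability space, I would write
\[
\E\bigl[d_{{\rm BL}^{(p)}}({\bf P}_{n,m}, {\bf P})\bigr] \leq \E\bigl[d_{{\rm BL}^{(p)}}({\bf P}_{n,m}, {\bf P}_n)\bigr] + \E\bigl[d_{{\rm BL}^{(p)}}({\bf P}_n, {\bf P})\bigr],
\]
and handle the two pieces separately. The first piece measures the error from estimating each $P_i$ by its empirical counterpart $P_{i,m}$; the second piece is the usual convergence of the empirical measure on the Polish space $(\mathcal{P}_p(\R^d), \mathcal{W}_p)$ towards its law ${\bf P}$.

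For the first term, the plan is to use the Lipschitz constraint in the supremum defining $d_{\mathrm{BL}^{(p)}}$: for any admissible test function $f$ we have $|f(P_{i,m}) - f(P_i)| \leq \min(2,\, \mathcal{W}_p(P_{i,m}, P_i))$, hence
\[
d_{{\rm BL}^{(p)}}({\bf P}_{n,m}, {\bf P}_n) \leq \frac{1}{n}\sum_{i=1}^n \min\bigl(2,\, \mathcal{W}_p(P_{i,m}, P_i)\bigr).
\]
Taking expectation and using that the blocks $(P_i, \X_{i,1}, \ldots, \X_{i,m})$ are i.i.d.\ across $i$ reduces the bound to the single quantity $\E[\min(2, \mathcal{W}_p(P_{1,m}, P_1))]$. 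Conditioning on $P_1$, which lies in $\mathcal{P}_p(\R^d)$ almost surely, the classical almost-sure empirical Wasserstein convergence (weak convergence of $P_{1,m}$ to $P_1$ combined with the SLLN $\frac{1}{m}\sum_{j=1}^m \|\X_{1,j}\|^p \to \int \|\x\|^p \mathrm{d}P_1(\x)$) gives $\mathcal{W}_p(P_{1,m}, P_1) \to 0$ almost surely. Since the quantity is dominated by the constant $2$, dominated convergence applied first conditionally on $P_1$ and then unconditionally yields $\E[\min(2, \mathcal{W}_p(P_{1,m}, P_1))] \to 0$, so the first term vanishes as $m = m(n) \to \infty$.

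For the second term, I would appeal to Varadarajan's theorem on the Polish space $(\mathcal{P}_p(\R^d), \mathcal{W}_p)$: the $P_i$ are i.i.d.\ with law ${\bf P}$ in that space, hence ${\bf P}_n$ converges weakly to ${\bf P}$ almost surely in $\mathcal{P}(\mathcal{P}_p(\R^d))$. Since $d_{\mathrm{BL}^{(p)}}$ is, by construction, the bounded Lipschitz metric associated to $(\mathcal{P}_p(\R^d), \mathcal{W}_p)$, it metrizes weak convergence on this separable metric space, so $d_{{\rm BL}^{(p)}}({\bf P}_n, {\bf P}) \to 0$ almost surely. The uniform bound $d_{{\rm BL}^{(p)}} \leq 2$ then passes this to convergence in expectation via dominated convergence, which completes the argument. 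The main delicate point is the conditional step in the first bound: one has to combine weak convergence of the empirical measure $P_{1,m}$ with convergence of its $p$-th moment, both being valid a.s.\ precisely because $P_1 \in \mathcal{P}_p(\R^d)$ almost surely; a Fubini-type argument is then needed to remove the conditioning on $P_1$ in a clean way.
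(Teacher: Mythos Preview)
Your proposal is correct and matches the paper's own proof essentially line for line: the same triangle-inequality split through ${\bf P}_n$, the same Lipschitz bound $\min(2,\mathcal{W}_p(P_{i,m},P_i))$ for the first term reduced to a single index by exchangeability/i.i.d., and the same appeal to empirical-measure convergence in the Polish space $(\mathcal{P}_p(\R^d),\mathcal{W}_p)$ for the second term. If anything, you are slightly more explicit than the paper (naming Varadarajan's theorem and the dominated-convergence step with bound $2$), while the paper simply says the second term tends to zero ``by standard means.''
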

A combination of Lemma~\ref{Lemma:ConvergenceBL}, Theorem~\ref{Theorem:contP} 
and the continuous mapping theorem
yields the following consistency result for the two-stage sampling estimator. 
\begin{Theorem}\label{two_stage_consistency}
	Set $\mathbf{P}\in \mathcal{P}(\mathcal{P}_{2}(\R^d)) $ be atomless. Let $\{{\bf P}_{n,m}\}_{n\in \N}$ be as in \eqref{ModelTwo} where $m=m(n)$ is such that $m\to \infty$ as $n\to \infty$. Then,   for every $Q\in \mathcal{P}^{a.c}_2(\R^d)$, 
	$$ {\rm WSD}(Q; {\bf P}_{n,m}) \overset{\mathbb{P}}{\longrightarrow}  {\rm WSD}(Q; {\bf P}) \quad \text{as } n\to \infty. $$
\end{Theorem}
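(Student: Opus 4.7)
The plan is to combine the three ingredients announced just before the theorem: Lemma~\ref{Lemma:ConvergenceBL} for the convergence of $\mathbf{P}_{n,m}$ to $\mathbf{P}$, Theorem~\ref{Theorem:contP} for the continuity of $\mathbf{P}' \mapsto \mathrm{SD}(Q;\mathbf{P}')$ at the atomless target $\mathbf{P}$, and the continuous mapping theorem for convergence in probability on a metric space. The argument is essentially mechanical; the main task is to verify that the various metric topologies line up.

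First I would apply Lemma~\ref{Lemma:ConvergenceBL} with $p=2$ to obtain $\mathbb{E}[d_{\mathrm{BL}^{(2)}}(\mathbf{P}_{n,m},\mathbf{P})] \to 0$, which by Markov's inequality upgrades to $d_{\mathrm{BL}^{(2)}}(\mathbf{P}_{n,m},\mathbf{P}) \overset{\mathbb{P}}{\longrightarrow} 0$. Since $(\mathcal{P}_2(\mathbb{R}^d),\mathcal{W}_2)$ is a Polish metric space, the bounded-Lipschitz metric $d_{\mathrm{BL}^{(2)}}$ metrizes the topology of weak convergence on $\mathcal{P}(\mathcal{P}_2(\mathbb{R}^d))$ relative to the $\mathcal{W}_2$-topology on the base space (the standard Kantorovich--Rubinstein-type statement recalled in Section~\ref{section:notation}, applied here with base space $(\mathcal{P}_2(\mathbb{R}^d),\mathcal{W}_2)$). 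Thus $\mathbf{P}_{n,m}$ converges weakly to $\mathbf{P}$ in $\mathcal{P}(\mathcal{P}_2(\mathbb{R}^d))$, in probability.

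To transfer this convergence through the depth functional, I would invoke the subsequence characterization of convergence in probability. Given any subsequence $(n_k)$, one extracts a further subsequence $(n_{k_\ell})$ along which $d_{\mathrm{BL}^{(2)}}(\mathbf{P}_{n_{k_\ell},m(n_{k_\ell})},\mathbf{P}) \to 0$ almost surely. Since $\mathbf{P}$ is atomless, Theorem~\ref{Theorem:contP} then yields, pointwise on this almost-sure event, $\mathrm{SD}(Q;\mathbf{P}_{n_{k_\ell},m(n_{k_\ell})}) \to \mathrm{SD}(Q;\mathbf{P})$. Since every subsequence admits such a convergent sub-subsequence, the desired conclusion $\mathrm{SD}(Q;\mathbf{P}_{n,m}) \overset{\mathbb{P}}{\longrightarrow} \mathrm{SD}(Q;\mathbf{P})$ follows. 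The only point worth checking -- hardly an obstacle -- is the identification of $d_{\mathrm{BL}^{(2)}}$-convergence with the form of weak convergence in $\mathcal{P}(\mathcal{P}_2(\mathbb{R}^d))$ used in Theorem~\ref{Theorem:contP}; once the Polishness of $(\mathcal{P}_2(\mathbb{R}^d),\mathcal{W}_2)$ is noted, the remainder is standard soft-analysis bookkeeping.
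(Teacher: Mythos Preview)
Your proposal is correct and follows exactly the route the paper indicates: the paper's entire proof is the single sentence ``A combination of Lemma~\ref{Lemma:ConvergenceBL}, Theorem~\ref{Theorem:contP} and the continuous mapping theorem yields the following consistency result,'' and you have simply written out that combination in detail via the subsequence characterization. The only addition you supply is the (routine) identification of $d_{\mathrm{BL}^{(2)}}$-convergence with weak convergence in $\mathcal{P}(\mathcal{P}_2(\mathbb{R}^d))$, which the paper leaves implicit.
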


Theorems \ref{Theorem:Glivenk} and \ref{two_stage_consistency} state that the empirical WSD converges to the population version asymptotically. Given enough sample size, the empirical WSD is informative of the truth and has practical value. The simulation results in Section \ref{simulation_consistency} also verify the above theorems. 

The rest of the section is devoted to  the convergence rates under the two-stage sampling model. Here we need further additional assumptions. 
Note that the definition of a $\mathcal{C}^{1,1}$ boundary is provided for instance in \cite{gilbarg2001elliptic}.

\begin{Assumption}\label{Assumtion-Reg}
	Let $P\sim {\bf P}$ and let $Q\in \mathcal{P}^{a.c}_2(\R^d) $.
	There exist two sets $\Omega$ and $\Omega'$, included in $\R^d$, that are compact and strongly convex with $\mathcal{C}^{1,1}$ boundary such that the following hold.
	\begin{enumerate}
		\item {(\it Caffarelli regularity condition on ${\bf P}$.)} There exists $\Lambda>0$ such that 
		\begin{equation}
			\label{eq:Caffarelli:P}
			\mathbb{P}_{P\sim {\bf P}}\left( \text{$P\in \mathcal{P}(\Omega)$ has density $p$ with 
				$ \Lambda^{-1}\leq p(\x) \leq \Lambda \quad \text{for all }\x\in \Omega $} \right)=1.
		\end{equation}
		\item {(\it Caffarelli regularity condition on $Q$.)} The distribution $Q$ is in $ \mathcal{P}(\Omega')$ and has density $q$ with, for the same $\Lambda >0$ as above, 
		$$ \Lambda^{-1}\leq q(\y) \leq \Lambda \quad \text{for all }\y\in \Omega'.  $$
	\end{enumerate}
	
\end{Assumption}
Under the previous assumptions, the rate of convergence under the two-stage sampling model follows from the well-known rates of the optimal transport map (see \cite[Corollary~7]{Manole.2024-Aos})
\begin{equation}
	\label{Rates-OT}
	\E[\| T_{Q,P_{1,m}}-T_{Q,P_1}\|_{L^2(Q)}^2]\leq \alpha(d,m):=C\cdot \begin{cases}
		\frac{1}{m} & {\rm if }
		\ d=1,\\
		\frac{\log(m)}{m} & {\rm if }
		\ d=2,\\
		m^{-\frac{2}{d}} & {\rm if }
		\ d>2,
	\end{cases} 
\end{equation}
for any fixed $P_1$ satisfying the event in \eqref{eq:Caffarelli:P},
where the constant $C$ depends only on the dimension $d$, the supports  $\Omega$ and $\Omega' $ and the bound $\Lambda$ on the density.
\begin{Lemma} 
	\label{lemma:rate:two:stage}
	Let Assumption \ref{Assumtion-Reg} hold. Then it follows that 
	$$ \E[|  {\rm WSD}(Q; {\bf P}_{n,m})- {\rm WSD}(Q; {\bf P}_n)| ] \leq  2\left( \alpha(d,m)\right)^{\frac{1}{2}} \E_{P_1\sim {\bf P}}\left[ W_2^{-1}(Q,P_{1})\right] ,$$
	where $\alpha(d,m)$ is as in \eqref{Rates-OT}.
\end{Lemma}

Note that $\E_{P_1\sim {\bf P}}\left[ W_2^{-1}(Q,P_{1})\right]$ is finite except in degenerate cases, essentially if, so to speak, $P_{1}$ is supported on a one-dimensional space. This is because the integral $\int_{ [-1,1]^d} \frac{1}{\|\x\|}  \mathrm{d} \x$ is infinite only when $d=1$. 

\begin{Remark}
	Consider the case where with probability one $P\sim {\bf P}$ is supported on a discrete set $\{\Z_1, \dots, \Z_k\}$. Then, the optimal transport problem is no longer cursed by dimensionality, cf.~\cite{delbarrio2021centraldisc}. In such a case it is easy to see that, if $Q$ satisfies Assumption \ref{Assumtion-Reg},   then 
	$$ \E[\| S_{{\bf P}_{n,m},Q} -S_{{\bf P}_{n},Q} \|_{L^1(Q)}] \leq C m^{-\frac{1}{2}}  $$
	and 
	$$ \E[|  {\rm WSD}(Q; {\bf P}_{n,m})- {\rm WSD}(Q; {\bf P}_n)| ] \leq  C m^{-\frac{1}{4}}, $$
	for a constant $C$,
	follow directly from the rates of convergence of the optimal transport map in the semi-discrete setting (see \cite{Sadhu.Kato-2024}).
\end{Remark}

The next corollary is immediate.
\begin{Corollary}
	Let  $\mathbf{P}\in \mathcal{P}(\mathcal{P}_2(\R^d)) $ and $Q\in \mathcal{P}^{a.c}_2(\R^d) $ be such that $ {\rm WSD}(Q; {\bf P} )<1$.
	Let Assumption \ref{Assumtion-Reg} hold. 
	Assume that $\E_{P_1\sim {\bf P}}\left[ W_2^{-1}(Q,P_{1})\right] < \infty$
	and that $\alpha(d,m) = o \left( \frac{1}{n} 
	\right) $ as $n,m \to \infty$. Then
	$$ \sqrt{n}( {\rm WSD}(Q; {\bf P}_{n,m} )- {\rm WSD}(Q; {\bf P} ))\xrightarrow{\mathcal{P}(\mathbb{R})} \frac{\langle \mathbb{G}_{{\bf P},Q}, S_{{\bf P},Q } \rangle_{L^2(Q)} }{ {\rm WSD}(Q; {\bf P} )-1}, $$ 
	as $n,m \to \infty$.
\end{Corollary}

\section{Comparison with other possible depth notions}
\label{section:comparison}
In the field of nonparametric statistics, extending the concept of depth to non-Euclidean spaces remains a challenging task. It has garnered considerable attention in advanced statistical research during the past decade \citep{LopezTukeyMetric, Geenens2023, virta2023spatial}. 
Within the confines of linear functional spaces, such as Banach or Hilbert spaces, the application of Euclidean methodologies remains largely successful, attributed primarily to their inherent vectorial structures. Contrastingly, the landscape becomes markedly more complex when venturing into the domain of infinite-dimensional spaces devoid of a vectorial framework.

The statistical literature identifies a mere trio of propositions capable of addressing this complexity: lens depth \cite{Geenens2023}, Tukey depth \citep{LopezTukeyMetric}, and a novel approach of metric spatial depth \citep{virta2023spatial}, different from our proposal. Here we delve into a meticulous exploration of these methodologies, with a particular emphasis on their adaptability to Wasserstein space framework. 

We shall demonstrate that these methodologies do not possess all the favorable theoretical and computational properties that we have established for the WSD. The WSD is thus most beneficial in broad, complex statistical contexts, thereby yielding a significant advancement in the field of machine learning and statistical analysis. Later in Section \ref{subsection:vs:general_metric_depth}, the simulation results also empirically support the above point of view.

\subsection{Tukey depth}
We first examine the adaptation of metric Tukey (or halfspace) depth proposed in \cite{LopezTukeyMetric} to the Wasserstein space. 
\begin{Definition} [Adapted from \citep{LopezTukeyMetric}]
	The Wasserstein halfspace depth of a distribution $Q\in \mathcal{P}_2(\R^d)$ with respect to a probability measure over Wasserstein space $\mathbf{P} \in \mathcal{P}(\mathcal{P}_2(\R^d))$ is the value 
	$$ \mathrm{HSD}(Q; {\bf P})=\inf_{ \substack{P_1,P_2\in \mathcal{P}_2(\R^d) \\ \mathcal{W}_2(Q,P_1) \leq \mathcal{W}_2(Q,P_2)  }}\mathbb{P}_{P\sim \mathbf{P}} \Big\{ \mathcal{W}_2(P,P_1) \leq \mathcal{W}_2(P,P_2) \Big\}.  $$
\end{Definition}
According to \citep{LopezTukeyMetric}, the Wasserstein halphspace depth is transformation invariant and vanishes at infinity.  Moreover, center-outward monotonicity (the function   $t\mapsto \mathrm{HSD}(\gamma(t); {\bf P})$ is monotone decreasing for any geodesic $\gamma(t)$ with $\mathrm{HSD}(\gamma(0); {\bf P})=1/2$) holds if for any constant speed geodesic $\gamma$ of $(\mathcal{P}_2(\R^d), \mathcal{W}_2)$, the following geometric condition holds: 
\begin{multline}\label{eq:centerTukey}
	\text{ there exists $t\in [0,1] $ such that  }\mathcal{W}_2^2(\gamma(t), P)\leq 
	\mathcal{W}_2^2(\gamma(t), Q) \\
	\implies \left(\mathcal{W}_2^2(\gamma(0), P)\leq 
	\mathcal{W}_2^2(\gamma(0), Q) \right)\ {\rm or}\ \left(\mathcal{W}_2^2(\gamma(1), P)\leq 
	\mathcal{W}_2^2(\gamma(1), Q) \right)  . 
\end{multline}
Recall (Section~\ref{subsec:Geodesic}) that a constant speed geodesic in a metric space $(\mathcal{M},d)$ is a curve $\gamma : [0, 1] \to \mathcal{M}$  such that $d(\gamma(t), \gamma(s))=|t-s| d(\gamma(0), \gamma(1))$ for all $s, t\in [0,1]$.  In $(\mathcal{P}_2(\R^d), \mathcal{W}_2)$, a constant speed geodesic corresponds to interpolations obtained from optimal transport plans \cite[p. 158]{AmbrosioGradient}. More precisely, any constant speed geodesic connecting two absolutely continuous distributions $P_1$ and $P_2$ is of the form $ \gamma(t)=((1-t)I + tT_{P_1,P_2})_\# P_1 $, where $T_{P_1,P_2}$ is the unique optimal map pushing $P_1$ to $P_2$ (see Section~\ref{subsec:WassersteinGeom}).  

Center-outward monotonicity is widely regarded as a favorable attribute within the scope of statistical depth measures. However, it is an attribute not typically anticipated in the context of spatial depths, particularly within Euclidean spaces. Notably, neither the transport-based depth nor the lens depth exhibit this property.  Moreover, for the Wasserstein halphspace depth it is not clear if the geometric condition \eqref{eq:centerTukey}, and {\it a fortiori} the center-outward monotonicity, hold in general.

Despite the ostensibly advantageous traits of Tukey depths, they are encumbered by significant computational demands, particularly evident as the dimensionality of the data increases. This computational intensity escalates to the point of impracticality for exact calculations in dimensions exceeding five, already in the Euclidean case. Within the confines of Wasserstein spaces, which are characterized by infinite dimensions, approximating Tukey depths poses a substantial challenge, much more so than for the WSD. 

\subsection{Lens depth}
Let us now turn our attention to the adaptation of the metric lens depth, presented in \cite{Geenens2023}, to the Wasserstein space.
\begin{Definition}[Adapted from \cite{Geenens2023}]
	The Wasserstein lens depth of a distribution $Q\in \mathcal{P}_2(\R^d)$ with respect to  ${\bf P}\in \mathcal{P}(\mathcal{P}_2(\R^d)) $ is defined as 
	$$ \mathrm{LD}(Q;{\bf P} )=\mathbb{P}_{(P',P)\sim {\bf P} \otimes {\bf P} }\Big\{ \mathcal{W}_2( P,P')\geq \max\big(  \mathcal{W}_2( P,Q),  \mathcal{W}_2( P',Q) \big)  \Big\}.$$
\end{Definition}
The Wasserstein lens depth is transformation invariant and 
vanishes at infinity. The two-stage plug-in estimator of $\mathrm{LD}(Q;{\bf P} )$ can be computed exactly for a discrete distribution $Q$ within polynomial time. Nevertheless, as indicated in \cite{Geenens2023}, the lens depth fails to exhibit center-outward monotonicity in the linear case. Similarly, this property would be absent in Wasserstein spaces. Moreover, to compute the empirical $$\mathrm{LD}(Q;{\bf P}_{n,m}),$$ one needs two copies of \textit{i.i.d.} samples, $\mathbf{P}_{n,m}$ and $\mathbf{P}'_{n,m}$, which increases the sample size requirement.  

\subsection{Metric spatial Wasserstein depth}
A recent paper \cite{virta2023spatial} gave a definition of spatial depth for general metric spaces that does not agree with our definition in the particular case of Wasserstein space.  
To avoid confusion in terminology, the proposal from \cite{virta2023spatial} will be referred to as metric spatial Wasserstein depth.
\begin{Definition}[Adapted from \cite{virta2023spatial}]
	The metric spatial Wasserstein depth of $Q\in \mathcal{P}_2(\R^d)$  with respect to ${\bf P}\in \mathcal{P}(\mathcal{P}_2(\R^d))$ is defined as
	$$ \mathrm{MSD}(Q;{\bf P} )= 1-\frac{1}{2}\E_{(P',P)\sim {\bf P} \otimes {\bf P} }\left[\frac{\mathcal{W}_2^2( P,Q)+\mathcal{W}_2^2( P',Q)-\mathcal{W}_2^2( P,P')}{\mathcal{W}_2( Q,P)\mathcal{W}_2( Q,P')} \right]. $$
\end{Definition}
The function $Q\mapsto \mathrm{MSD}(Q;{\bf P} )$ takes values in the interval $[0,2]$.  It is transformation invariant and vanishing at infinity. The metric spatial depth presents a remarkably viable and effective solution that is widely applicable to general metric spaces. Nevertheless, when specialized to the Wasserstein space, it falls short of fulfilling all the desirable properties that we have established for the WSD.
In particular, also pointed out in \cite{virta2023spatial}, the question of the inclusion of spatial medians within the set of deepest points in terms of the metric spatial depth remains overall open.
Note that taking a directional derivative of $\mathrm{MSD}(Q;{\bf P} )$ with respect to $Q$, in the aim of studying deepest points, does not seem particularly fruitful.
This leads us to conjecture that, in general, spatial medians have no relation to the maximizers of $\mathrm{MSD}(Q;{\bf P} )$.
In contrast, our Theorem~\ref{Theo:RelationSpatial} establishes that spatial medians maximize the WSD, in more general situations.

In the following result we show that $\mathrm{MSD}(Q;{\bf P} )$ is upper bounded by $1-(1- {\rm WSD}(Q;{\bf P} ))^2$ so that   its maximum  value is one. Moreover, we show that $\mathrm{MSD}(Q;{\bf P} )= 1-(1- {\rm WSD}(Q;{\bf P} ))^2$ if and only if  $T_{P',P}=T_{Q,P} \circ T_{P',Q}$ for ${\bf P}\otimes {\bf P}$-a.e. $P,P'$. In particular, we can show that the same bound applies to any non-negatively curved manifold (defining the metric space in \cite{virta2023spatial}).   Hence, the metric spatial depth  in a strictly  positively curved manifold is strictly smaller than one except if the dataset lies in a geodesic.   


\begin{Lemma}\label{lemma:RelationDephts}
	For every ${\bf P}\in \mathcal{P}(\mathcal{P}_2(\R^d)) $ and $Q\in \Ptac(\R^d)$ it follows that 
	$$  {\rm MSD}(Q;{\bf P} ) \leq 1-(1- {\rm WSD}(Q;{\bf P} ))^2$$
	with equality if and only if 
	$ \mathcal{W}_2(P,P')=\|T_{Q,P}-T_{Q,P'}\|_{L^2(Q)} $ for all $P,P'\in {\rm supp}({\bf P})$.  
\end{Lemma}

Finally, beyond the Wasserstein space, a natural question remaining overall open in \cite{virta2023spatial} is whether the maximal possible value $2$ for the metric spatial depth can be reached, for some pairs of metric spaces and distributions on these spaces.
In particular Theorem 3 there 
provides the existence of such pairs. However, the conditions of this theorem
are very strict, as noted in \cite{virta2023spatial}, and only satisfied in arguably pathological metric spaces. Lemma \ref{lemma:RelationDephts} shows that for the Wasserstein space, it is not even possible for the metric depth to be strictly larger than one.

\section{Nonparametric two-sample tests based on Wasserstein spatial depth}\label{section:testing}
A natural application of WSD is to construct nonparametric testing procedures. 
Consider two distribution-valued random objects, $P\sim \mathbf{P}$ and $Q\sim \mathbf{Q}$ with $\mathbf{P}, \mathbf{Q} \in \mathcal{P}(\mathcal{P}_{2}(\R^d))$. Given two sample sets
\begin{equation*}
	\{P_1, P_2, \ldots, P_n\} \stackrel{\textit{i.i.d.}}{\sim} \mathbf{P} \text{ and } \{Q_1, Q_2, \ldots, Q_n\} \stackrel{\textit{i.i.d.}}{\sim} \mathbf{Q}, 
\end{equation*}
or, as in Section \ref{two_stg_sampling}, their empirical versions 
\begin{equation*}
	\mathbf{P}_{n,m} = 
	\frac{1}{n}
	\sum_{i=1}^n 
	\delta_{P_{i,m}}
	~ ~ ~
	\text{ and }
	~ ~ ~ 
	\mathbf{Q}_{n,m} =  \frac{1}{n}
	\sum_{i=1}^n 
	\delta_{Q_{i,m}},
\end{equation*}
we want to test whether they are from the same population, \textit{i.e.}, 
\begin{equation}\label{NPtest}
	H_0: \mathbf{P}=\mathbf{Q}, \qquad H_1: \mathbf{P}\neq\mathbf{Q}.
\end{equation}
This type of hypothesis testing is particularly useful for data in the form of multiple batches or ``bags'', which is prevalent in modern sciences.  
For examples, a same type of biomedical data is collected from different laboratories with one data batch from each laboratory; one animal species has several subpopulations and researchers collect one data batch from each subpopulation; some spatial-temporal data can be treated as batches in sequence where each batch contains data points across the spatial domain; multi-batch training/processing has emerged as a powerful paradigm in machine learning. 
In the above cases, batch information is also important because the distributional shifts among the data batches somehow reveal the underlying mechanism of interest. 

The two-stage sampling setting described in \eqref{eq:dataTensor} adapts perfectly to such multi-batch data structure: each batch of data points $P_{i,m}$ ($Q_{k, m}$) are \textit{i.i.d.} from $P_i$ ($Q_k$), and 
\begin{align*}
	P_i \stackrel{\textit{i.i.d.}}{\sim} \mathbf{P},\quad Q_k \stackrel{\textit{i.i.d.}}{\sim} \mathbf{Q},
\end{align*}
where $\mathbf{P}$ and $\mathbf{Q}$ are two populations of distributions. 
This model allows distributional shifts among the data batches, manifests the hierarchically varying data structure, and thus offers a flexible framework to different types of real data. 
Compared to traditional two-sample tests which pool all the data batches into one large sample set, the statistical test in \eqref{NPtest} incorporates the batch information, and better captures the distributional variation along the geodesics. As a toy example, suppose that we have two sample sets of data batches: the first set contains two batches with the first batch \textit{i.i.d.} drawn from $\text{Unif}[0,1]$ and the second batch \textit{i.i.d.} drawn from $\text{Unif}[1,2]$; the other set contains also two batches both of which are \textit{i.i.d.} drawn from $\text{Unif}[0,2]$. 
Throughout the paper, for a set $A$, $\text{Unif} A$ denotes the uniform distribution on $A$. 
Clearly, these two sample sets are not from the same population. However, we will probably draw a wrong conclusion if we disregard the batch information and mix the data points from different batches followed by traditional two-sample tests.

In view of the above, we propose the following nonparametric permutation test procedure for two-sample test based on WSD, which is inspired by the Liu-Singh depth-based nonparametric test \cite{liu1993quality} and by permutation tests \cite{pitman1937significance}. 
\begin{itemize}
	\item Step 1: Compute $\rm{WSD}(P_{i,m}; \widetilde{\mathbf{P}}_{n,m})$ for $1\leq i\leq n$ and $\rm{WSD}(Q_{k,m}; \widetilde{\mathbf{P}}_{n,m})$ for $1\leq k\leq n$, where $\widetilde{\mathbf{P}}_{n,m}$ is an independent copy of $\mathbf{P}_{n,m}$ and independent of $\mathbf{Q}_{n,m}$. 
	In practice, we would split a sample from $\mathbf{P}$
	as in \eqref{eq:dataTensor}
	with $2n$ rows into $\widetilde{\mathbf{P}}_{n,m}$ and $\mathbf{P}_{n,m}$. 
	For simpler notation, we denote 
	\begin{equation*}
		W^P_i = \rm{WSD}(P_{i,m}; \widetilde{\mathbf{P}}_{n,m}), \quad W^Q_k = \rm{WSD}(Q_{k,m}; \widetilde{\mathbf{P}}_{n,m}).
	\end{equation*}
	
	\item Step 2: Compute the observed Kolmogorov–Smirnov (KS) test statistic, 
	\begin{equation*}
		T_{\text{obs}} = \sup_{t \in [0,1]} | F_{n,m}(t) -G_{n,m}(t) |,
	\end{equation*}
	where     
	\begin{align*}
		F_{n,m}(t) = \frac{1}{n}\sum_{i=1}^n \mathbf{1}\{ W^P_i \leq t \}, \ G_{n,m}(t) = \frac{1}{n}\sum_{k=1}^n \mathbf{1}\{ W^Q_k \leq t \}.
	\end{align*}
	\item Step 3: Perform independent random permutations for $B\in \N$ times: for each $1\leq b\leq B$, randomly draw $n$ elements
	without replacement
	from the pooled WSDs,
	\begin{equation*}
		\{ W^P_i\}_{i=1}^n \cup  \{ W^Q_k \}_{k=1}^n 
	\end{equation*}
	as the new ``$\{ W^P_i \}_i$'' and the remaining $n$ elements as the new ``$\{ W^Q_k \}_k$''; compute the same statistic (KS statistic) as above, denoted as $T_b$;   
	$B$ independent repetitions generate $\{ T_b \}_{1\leq b\leq B}$. The permutations are independent of the samples.
	\item Step 4: If there are no ties in $ \{T_{\text{obs}}\} \cup \{ T_b \}_{1\leq b\leq B}$, compute the p-value via
	\begin{equation}\label{p-value}
		p = \frac{ 1+ \#\{b: T_b\geq T_{\text{obs}} \} }{ 1+B }.
	\end{equation}
	If there are ties in  $ \{T_{\text{obs}}\} \cup \{ T_b \}_{1\leq b\leq B}$, we break ties by randomization while preserving their relative orderings. For a group of ties $T_{b_1} = \cdots = T_{b_R} =a$ (letting $T_0 = T_{\text{obs}}$ by convention), we slightly change their values via
	\begin{equation*}
		\widetilde{T}_{b_r} = T_{b_r} + \frac{g}{4}\cdot U_{b_r}, \quad U_{b_r}\sim \rm{Unif}[-1,1],
	\end{equation*}
	where $g$ is the smallest gap between $a$ and any other values in $\{ T_b \}_{0\leq b\leq B}$. Then, we use the randomized test statistics $\widetilde{T}_0,\dots,\widetilde{T}_B$ to compute the p-value in \eqref{p-value}.
	\item Step 5: reject the null if $p < \alpha$ for some pre-determined significance level $\alpha$. 
\end{itemize}

Note that an alternative to the above permutation-based procedure is to break ties before computing $T_{
	\text{obs}}$. In this case, it is not necessary to perform the permutation steps 3 and 4. Instead, one can simply reject if $T_{
	\text{obs}}$ is above its quantile $1 - \alpha$ under the null hypothesis. This quantile does not depend on the underlying distribution of $\rm{WSD}(P_{i,m}; \widetilde{\mathbf{P}}_{n,m})$, since the KS test is distribution-free (in the absence of ties) \citep{massey1951distribution}. The benefit of the above permutation-based procedure is
not only that we do not need to break ties when computing $T_{
	\text{obs}}$, but also that 
any statistic can be chosen to compute $T_{\text{obs}}$, including those that are not distribution-free under the null hypothesis.
An alternative test statistic can be
\begin{equation*}
	T^{\text{alt}}_{\text{obs}} = \frac{1}{n}\sum_{i=1}^n \rm{WSD}(P_{i,m}; \widetilde{\mathbf{P}}_{n,m}) - \frac{1}{n}\sum_{k=1}^n \rm{WSD}(Q_{k,m}; \widetilde{\mathbf{P}}_{n,m}).
\end{equation*}
One may choose a test statistic that adapts to specific distributions of $\big\{ \rm{WSD}(P_{i,m}; \widetilde{\mathbf{P}}_{n,m}) \big\}_i$ and $\big\{ \rm{WSD}(Q_{k,m}; \widetilde{\mathbf{P}}_{n,m}) \big\}_k$.

The two propositions below provide theoretical guarantees on the Type I error and power of the proposed testing procedure. First we show that it achieves exact finite-sample control of the Type I error.

\begin{Proposition}
	Under $H_0:{\bf P}={\bf Q}$, the p-value computed in Step 4  follows a uniform distribution on $\{ \frac{1}{B+1}, \frac{2}{B+1}, \ldots, \frac{B}{B+1}, 1 \}$ for any fixed $(n, m, B)$. 
	Additionally, when $B\rightarrow \infty$, 
	\begin{equation*}
		p \stackrel{w}{\longrightarrow} \rm{Unif}[0, 1].
	\end{equation*} 
	\label{prop:level:two:sample}
\end{Proposition}

Next, Proposition \ref{testing_power} shows that the power of the testing procedure goes to one asymptotically.
There, we address one-stage sampling for simplicity. 
\begin{Proposition}\label{testing_power}
	Assume $\mathbf{P}, \mathbf{Q} \in \mathcal{P}(\mathcal{P}_2^{a.c}(\R^d))$. 
	Let $P \sim \mathbf{P}$ and $Q \sim \mathbf{Q}$.
	Let $F$ be the 
	cumulative distribution function
	(CDF) of the distribution $\rm{WSD}(P; \mathbf{P})$; let $G$ be the CDF of the distribution $\rm{WSD}(Q; \mathbf{P})$. 
	Consider the proposed testing procedure adapted to the one-stage sampling, with $(P_{i,m},Q_{k,m},\widetilde{\mathbf{P}}_{n,m})$ replaced by 
	$(P_{i},Q_{k},\widetilde{\mathbf{P}}_{n})$,
	with $\widetilde{\mathbf{P}}_{n}$ an independent copy of $\mathbf{P}_{n}$.
	If $F$ and $G$ are continuous and different, then
	the power of the proposed testing procedure goes to one when $n,B\rightarrow\infty$, that is $ p \to 0$ in probability.
\end{Proposition}

Note that in Proposition \ref{testing_power}, we assume that the two WSD distributions are different, which is stronger than assuming that $\mathbf{P}$ and $ \mathbf{Q} $ are different. 
Nevertheless, this type of assumption is frequent in the two-sample testing literature to establish power properties. In particular, a similar assumption is made in \cite{dubey2024metric} where the power is established under the assumption that distance profiles are different between the two distributions of the two samples.

The proofs of Propositions \ref{prop:level:two:sample} and \ref{testing_power} use standard arguments for permutation-based tests,
combined with Theorem \ref{Theorem:Glivenk},
but we provide all the details for completeness.

\section{Numerical simulations}\label{section:simulation}
In this section, we carry out extensive numerical simulations to validate our notion of Wasserstein spatial depth and support its theoretical properties and practical utility. Specifically, we confirm the consistency of the empirical WSD, examine its relationship with conventional spatial depth in certain cases, evaluate its effectiveness in outlier detection and show its benefit compared to applying functional depths 
and general metric depths
to distributions. 
We also show the merits of the two-sample test based on the WSD. 
Throughout this section, we use the \texttt{R} package \texttt{transport} to compute all the Wasserstein distances and optimal transport maps from data clouds. Based on the two-stage sampling model in \eqref{eq:dataTensor}, the empirical WSDs are calculated via the formula below. For $Q_m = \frac{1}{m}\sum_{j=1}^m \delta_{\X_{q,j}}$ with $\X_{q,1} , \ldots , \X_{q,m} \stackrel{\textit{i.i.d.
}}{\sim} Q$,
\begin{equation}\label{empirical_WSD_1}
	{\rm WSD}(Q_m; {\bf P}_{n,m}) = 1- \sqrt{ \frac{1}{m} \sum_{j=1}^m  \bigg\| \frac{1}{n_q}\sum_{i\neq q} \frac{\X_{q,j}-T_{Q_m, P_{i,m}}(\X_{q,j})}{\mathcal{W}_2(Q_m, P_{i,m})} \bigg\|^2 },
\end{equation}
where $Q_m$ could be outside (with the convention $q = n+1$ and $n_q=n$) or within (with the convention $q \in \{1,\ldots,n\}$ and $n_q= n-1$) the sampled distributions $\{ P_{1,m}, \ldots, P_{n,m} \}$, and where we recall the convention $\mathbf{0}/0 = \mathbf{0}$. 
The code for all simulations is publicly available at \url{https://github.com/YishaYao/Wasserstein-Spatial-Depth/tree/main}.

Since computing the optimal transport map between any pair of empirical distributions costs $O(m^2)$ \cite{peyre2019computational}, and once the optimal transport map between a pair of empirical distributions is available, the corresponding Wasserstein distance immediately follows with almost zero extra cost,
the computational complexity of $ {\rm WSD}(Q_m, \mathbf{P}_{n,m})$ is of order $O(nm^2)$.

\subsection{Consistency of the empirical Wasserstein spatial depth}\label{simulation_consistency}
The simulation results below support the theoretical results in Section \ref{consistency}. That is, the empirical WSD, formulated in \eqref{empirical_WSD_1}, is close to the theoretical value $ {\rm WSD}(Q; {\bf P})$ in Definition \ref{Wdepth_definition}. Hence, the WSD can be inferred accurately from sample data and has practical value. 
Four cases are considered and described below.
\begin{itemize}
	\item Case 1: ${\bf P}$ is supported on a family of exponential distributions indexed by the rate parameter $\lambda$ which follows a $\mathrm{Beta}(2,2)$ distribution. The theoretical WSD of the exponential distribution with rate parameter $\lambda_Q \in (0,1]$, denoted as $\mathrm{exp}(\lambda_Q)$, with respect to ${\bf P}$ is
	\begin{align*}
		{\rm WSD}(Q; {\bf P}) &= 1- \sqrt{ \int_0^\infty \Big( \mathbb{E}_{\lambda\sim \mathrm{Beta}(2,2)} \frac{x-(\lambda_Q/\lambda)x}{\mathcal{W}_2(F_{\lambda_Q}, F_\lambda)}  \Big)^2 \lambda_Q e^{-\lambda_Q x} \mathrm{d} x}\\
		&= 1- \sqrt{ \int_0^\infty \frac{\lambda_Q^2 x^2}{2} \Big( \mathbb{E}_{\lambda\sim \mathrm{Beta}(2,2)} \frac{1/\lambda_Q - 1/\lambda}{|1/\lambda_Q - 1/\lambda|} \Big)^2 \lambda_Q e^{-\lambda_Q x} \mathrm{d} x}\\
		&= 1- \sqrt{ \int_0^\infty \frac{\lambda_Q^2}{2} \Big( 4\lambda_Q^3 -6\lambda_Q^2 +1 \Big)^2 x^2 \lambda_Q e^{-\lambda_Q x} \mathrm{d}x}\\
		&= 1- \Big| 1+4\lambda_Q^3 - 6\lambda_Q^2 \Big| ,
	\end{align*}
	where $F_\lambda$ is the CDF of the exponential distribution with rate parameter $\lambda$, the optimal map from $\mathrm{exp}(\lambda_Q)$ to $\mathrm{exp}(\lambda)$ is 
	\begin{equation*}
		T_{\lambda_Q, \lambda} (x) = F_\lambda^{-1} \circ F_{\lambda_Q}(x) = \frac{\lambda_Q x}{\lambda} ,
	\end{equation*}
	and $\mathcal{W}_2(F_{\lambda_Q}, F_\lambda)$ is derived by
	\begin{equation*}
		\mathcal{W}_2(F_{\lambda_Q}, F_\lambda) = \sqrt{\int_0^\infty \Big(x-(\lambda_Q/\lambda)x \Big)^2 \lambda_Q e^{-\lambda_Q x} \mathrm{d}x} = \sqrt{2}\Big| \frac{1}{\lambda_Q}-\frac{1}{\lambda} \Big| .
	\end{equation*}
	Note that the WSD is equal to $1$ (maximal) for $\lambda_Q = 1/2$ which is the mean of the $\mathrm{Beta}(2,2)$ distribution.
	
	\item Case 2: ${\bf P}$ is supported on a family of Weibull  distributions with fixed scale parameter $1$ and varying shape parameter $k$. This family of Weibull distributions is indexed by the shape parameter $k$ which takes value either $1$ or $2$ with equal probabilities, i.e., $k\sim \mathrm{Unif}\{1,2\}$. Let $Q$ be the Weibull distribution with shape parameter $k_Q$ ($k_Q$ equating either $1$ or $2$). Its theoretical WSD with respect to ${\bf P}$ is
	\begin{align*}
		{\rm WSD}(Q; {\bf P}) &= 1- \sqrt{ \int_0^\infty \Big( \mathbb{E}_{k\sim \mathrm{Unif}\{1,2\} } \frac{x-x^{k_Q/k}}{\mathcal{W}_2(k_Q, k)}  \Big)^2 k_Q x^{k_Q-1} e^{-x^{k_Q}} \mathrm{d}x}\\
		&= 1- \sqrt{ \int_0^\infty \Big( \frac{x-x^{k_Q/\overline{k}_{Q}}}{2\mathcal{W}_2(k_Q, \overline{k}_{Q})} \Big)^2  k_Q x^{k_Q-1} e^{-x^{k_Q}} \mathrm{d}x}\\
		&= 1- \frac{1}{2\mathcal{W}_2(k_Q, \overline{k}_{Q})} \sqrt{\int_0^\infty \big(x-x^{k_Q/\overline{k}_{Q}} \big)^2 k_Q x^{k_Q-1} e^{-x^{k_Q}} \mathrm{d}x} \\
		&= 1/2,
	\end{align*}
	where the optimal map from $\mathrm{Weibull}(k_Q)$ to $\mathrm{Weibull}(k)$ is 
	\begin{equation*}
		T_{k_Q, k} (x) = F_k^{-1} \circ F_{k_Q}(x) = x^{k_Q/k},
	\end{equation*}
	using $\overline{k}_{Q} = 3 - k_Q$, the convention $0/0 = 0$,
	and where $\mathcal{W}_2(k_Q, \overline{k}_{Q})$ is derived by
	\begin{align*}
		\mathcal{W}_2(k_Q, \overline{k}_{Q}) &= \sqrt{\int_0^\infty \big(x-x^{k_Q/\overline{k}_{Q}} \big)^2 k_Q x^{k_Q-1} e^{-x^{k_Q}} \mathrm{d}x} .
	\end{align*}
	
	\item Case 3: ${\bf P}$ is supported on a family of isotropic bivariate Gaussian distributions with varying centers. The distribution of the Gaussian centers is supported on four points $\{ \boldsymbol{\mu}_1=(1,0)^\top, \boldsymbol{\mu}_2=(-1,0)^\top, \boldsymbol{\mu}_3=(0,1)^\top, \boldsymbol{\mu}_4=(0,-1)^\top \}$ with equal probabilities $1/4$. Let $Q$ be  $\mathcal{N}(\boldsymbol{\mu}_q, \boldsymbol{I})$ for $q \in \{1, \ldots, 4\}$. The theoretical WSD is computed as, see Section~\ref{gaussian_exp}, 
	\begin{align*}
		{\rm WSD}(Q; {\bf P}) &= 1- \Bigg\| \frac{1}{4}\sum_{k\neq q} \frac{\boldsymbol{\mu}_q-\boldsymbol{\mu}_k}{\|\boldsymbol{\mu}_q-\boldsymbol{\mu}_k\|} \Bigg\| = \frac{3-\sqrt{2}}{4} .
	\end{align*}
	
	\item Case 4: ${\bf P}$ is supported on a family of bivariate uniform distributions $\mathrm{Unif} [0, c]^2 $ with $c\sim \mathrm{Unif}[1,2]$. Let $Q$ be $\mathrm{Unif} [0, c_q]^2 $.  Its theoretical WSD with respect to ${\bf P}$ is
	\begin{align*}
		{\rm WSD}(Q; {\bf P}) &= 1- \sqrt{ \int_{[0,c_q]^2} \bigg\| \mathbb{E}_{c\sim \mathrm{Unif}[1,2]}\ \frac{\mathbf{x}-(c/c_q)\mathbf{x}}{\mathcal{W}_2(c_q, c)}  \bigg\|^2 \frac{1}{c_q^2} \mathrm{d} \mathbf{x} } \\
		&= 1- \sqrt{ \int_{[0,c_q]^2} \bigg\| \sqrt{3/2} \mathbf{x} \mathbb{E}_{c\sim \mathrm{Unif}[1,2]} \frac{1-(c/c_q)}{|c_q-c|}  \bigg\|^2 \frac{1}{c_q^2} \mathrm{d} \mathbf{x} } \\
		&= 1- \sqrt{ 3(2-3/c_q)^2/2 \int_{[0,c_q]^2} \| \mathbf{x} \|^2 \frac{1}{c_q^2} \mathrm{d} \mathbf{x} } \\
		&= 1- |2c_q-3| ,
	\end{align*}
	where the optimal map from $\mathrm{Unif} [0, c_q]^2 $ to $\mathrm{Unif} [0, c]^2 $ is the dilation 
	\begin{equation*}
		T_{c_q, c}(\mathbf{x}) = \frac{c}{c_q}\mathbf{x},
	\end{equation*}
	and $\mathcal{W}_2 (c_q, c)$ is computed as 
	\begin{equation*}
		\mathcal{W}_2 (c_q, c) = \sqrt{ \int_{[0,c_q]^2} \|\mathbf{x}-(c/c_q)\mathbf{x}\|^2 (1/c_q^2) \mathrm{d} \mathbf{x} }= \sqrt{2/3} |c_q-c|.
	\end{equation*}
\end{itemize} 
For each of the above cases, we repeat independently the following experiment for $100$ times: generate the data array $\mathbf{X}$ via the two-stage sampling procedure in Section \ref{two_stg_sampling}; then compute the empirical WSDs of the sampled distributions; finally, compare the theoretical WSD and the ensemble of $100$ empirical WSDs.  We choose $m=1000,\ n=2000$.
As shown in Figure~\ref{consis}, the empirical estimates are gathering tightly around the corresponding theoretical values. 
\begin{figure}[!htbp]
	\centering
	\includegraphics[width=1\linewidth]{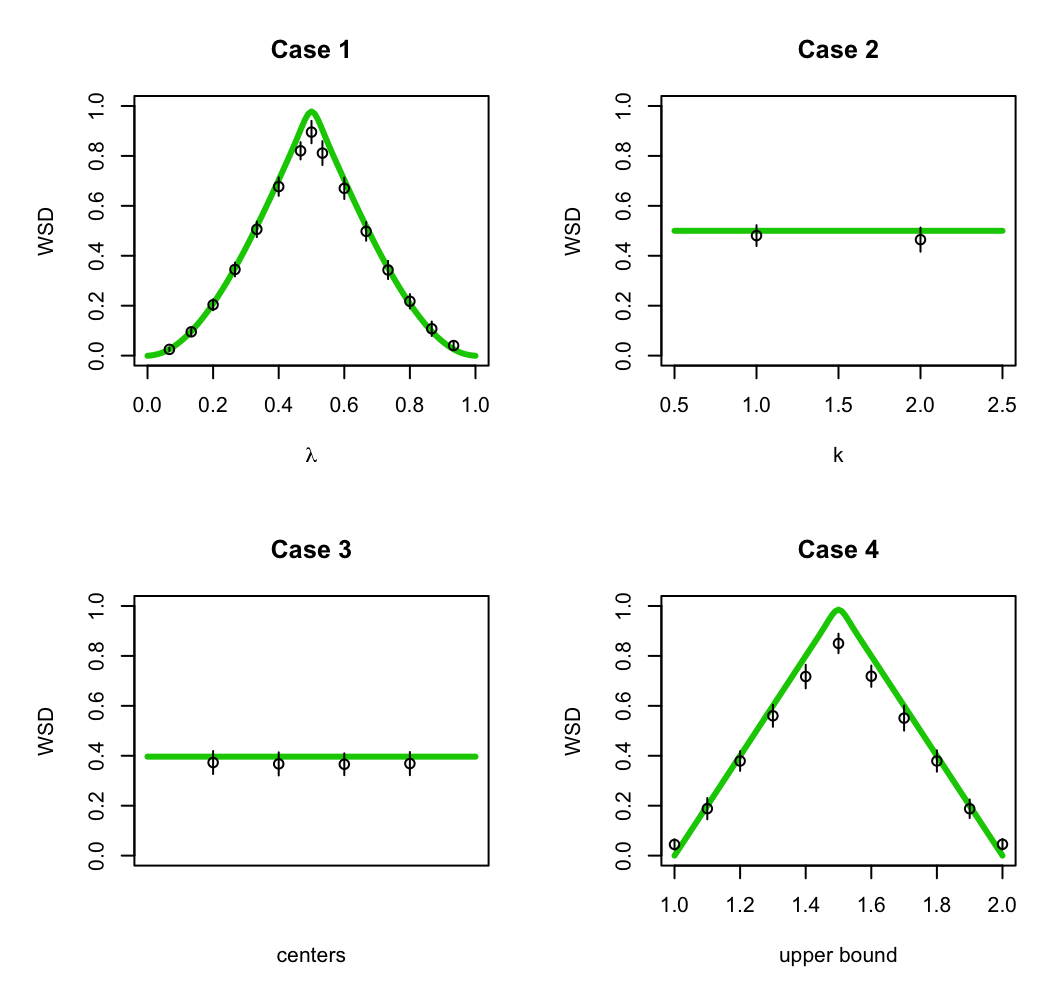}
	\caption{The green solid lines depict the change of theoretical WSD along the parameter indexing ${\bf P}$. The black circles represent the distribution of empirical WSDs, with error bars indicating one standard deviation above and below the mean.}
	\label{consis}
\end{figure}

\subsection{Wasserstein spatial depth vs. conventional spatial depth}
\label{subsection:WSd:vs:conv}
As discussed in Section \ref{section:examples}, when $\mathbf{P}$ is supported on a location family, 
the WSD coincides with the spatial depth of the location parameter. We verify the equivalence between WSD and spatial depth in the four cases described below.
\begin{itemize}
	\item Case 1: $\mathbf{P}$ is supported on a set of $d=10$-dimensional Gaussian distributions with identity covariance matrix. The Gaussian centers are \textit{i.i.d.} from $\mathrm{Unif}[-2,2]^d$. 
	\item Case 2: $\mathbf{P}$ is supported on a set of $d=10$-dimensional Gaussian distributions with a common covariance matrix. The common covariance matrix is chosen as $\Sigma_{i,j} = 0.2^{|i-j|}$. The Gaussian centers are drawn in the same way as in Case 1.
	\item Case 3: The support of $\mathbf{P}$ is a set of uniform distributions on $d=10$-dimensional unit cubes with varying centers. The centers of the cubes are identically independently drawn from $\mathcal{N}(\mathbf{0}, \boldsymbol{I})$.
	\item Case 4: $\mathbf{P}$ is supported on a set of univariate double exponential distributions with fixed rate equaling $1$ and varying locations. The locations are identically independently drawn from $\mathcal{N}(0,1)$.
\end{itemize}
The simulation procedure is as follows. First, $n=500$ distributions are drawn as described above in each case. Second, $m=500$ data points are randomly drawn for each sampled distribution. Third, the empirical WSD of each empirical distribution is computed according to \eqref{empirical_WSD_1}, and the empirical spatial depths of the locations are computed as in \eqref{spatial_depth}. Finally, we check whether the empirical WSDs and spatial depths are approximately equal. As shown in Figure \ref{WSD_Sdepth}, there are nice equality relationships between the two depths.
\begin{figure}[!htbp]
	\centering
	\includegraphics[width=1\textwidth]{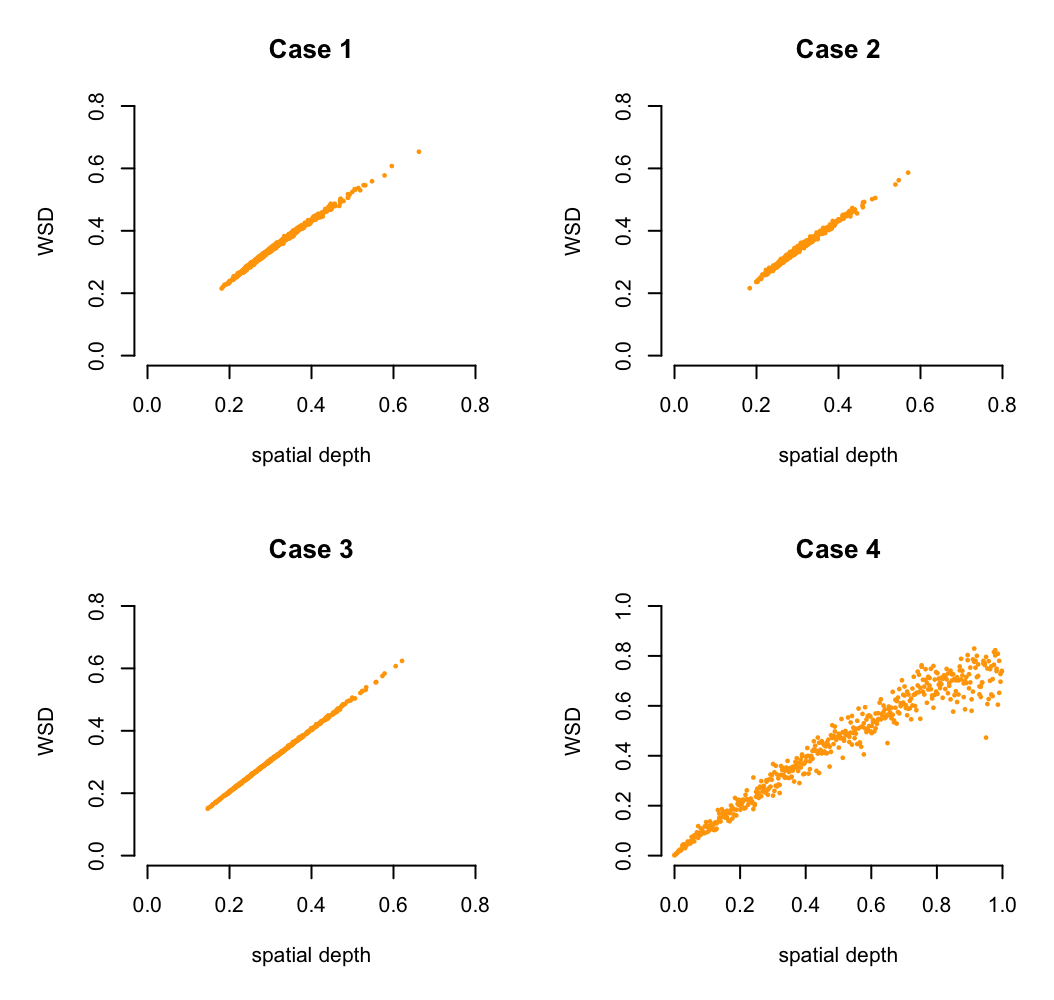}
	\caption{
		The relationships between the WSD and conventional spatial depth in the four cases of Section \ref{subsection:WSd:vs:conv}. 
	}
	\label{WSD_Sdepth}
\end{figure}

\subsection{Outlier detection}\label{subsection:outlier_detect}
Like conventional statistical depth, WSD can be used to detect outlier distributions. 
We demonstrate its utility for outlier detection in two cases. In each case, we draw $n=500$ distributions from a population ${\bf P}$ and six outlier distributions which are relatively far away from the population. For each sampled distribution, we draw $m=500$ data points.  All the distributions are on $\R^d$ with $d=10$. 
\begin{itemize}
	\item Case 1: the population is a collection of Gaussian distributions with common identity covariance matrix and random centers, where the centers follow \textit{i.i.d.} $\mathcal{N}(\mathbf{0}, \boldsymbol{I})$; 
	the six outlier distributions are 
	\begin{align*}
		&\mathcal{N}((4, \ldots, 4)^\top, \boldsymbol{I}),\quad \mathcal{N}((4, \ldots, 4)^\top, \bSigma)\ \text{with}\ \Sigma_{i,j}=0.5^{|i-j|},\\
		&\big[ \mathrm{Gamma}(3,2) \big]^d,\quad \big[ \mathrm{Unif}[-6, 6] \big]^d, \quad \big[ 8\cdot\mathrm{Beta}(0.1, 0.1)-4 \big]^d,\\
		&\mathrm{Multinomial}\big(2d, (0.25, 0.25, 0.15, 0.15, 0.15, 0.01, 0.01, 0.01, 0.01, 0.01)\big).
	\end{align*}
	Here for a distribution $\mu$, $[\mu]^d$ is the distribution such that for
	$\mathbf{Z} = (\mathbf{Z}_1 , \ldots , \mathbf{Z}_d) \sim [\mu]^d $ we have $\mathbf{Z}_1 , \ldots , \mathbf{Z}_d 
	\stackrel{\textit{i.i.d.}}{\sim} 
	\mu$.
	\item Case 2: the population is a collection of uniform distributions $\big[ \mathrm{Unif}[0, u] \big]^d$ with $u\sim \mathrm{Unif}[1,2]$; the outlier distributions are 
	\begin{align*}
		&\mathcal{N}\big( (3, \ldots, 3)^\top, \boldsymbol{I} \big), \quad \mathcal{N}\big( (-1, \ldots, -1)^\top, \bSigma \big)\ \text{with}\ \Sigma_{i,j}=0.5^{|i-j|},\\
		&\big[\mathrm{Poisson}(4)\big]^d, \quad \big[2\cdot\mathrm{Binomial}(d, 0.2)-1\big]^d, \quad \big[\chi^2_{10}\big]^d ,\\
		&\mathrm{Multinomial}\big(2d, (0.25, 0.15, 0.1, 0.1,\linebreak[1] 0.15, 0.05, 0.05, 0.05, 0.05, 0.05)\big).
	\end{align*}
\end{itemize}

For each case, we repeat the same experiment for $200$ times. In each replica: we draw the data array $\mathbf{X}$ according to Case 1 or Case 2; compute their empirical WSD according to \eqref{empirical_WSD_1}; detect the outlier distributions whose empirical WSDs are smaller than the $1\%$ quantile of all the empirical WSDs.
As shown in Table \ref{tab:outlier}, each outlier distribution in fact has abnormally small WSD values (compared to the population) in all $200$ replicas of the experiment. There are  gaps between the maxima of the WSDs of the outliers and the minima of the WSDs of the population.
Therefore, the outlier distributions can be easily separated from the population based on the magnitude of WSD. 
Figure \ref{outlier_fig} shows the result of one randomly chosen experiment. 

\begin{table}[h]
	\caption{Ranges of the empirical WSD values in $200$ replicas}
	\label{tab:outlier}
	\begin{tabular}{|l|c|c|}
		\hline
		& Case 1           & Case 2           \\ \hline
		population & (0.1413, 0.7126) & (0.2513, 0.7254) \\ \hline
		outlier 1  & (0.0452, 0.0489) & (0.0167, 0.0176) \\ \hline
		outlier 2  & (0.0451, 0.0492) & (0.0143, 0.0165) \\ \hline
		outlier 3  & (0.0171, 0.0190) & (0.0091, 0.0095) \\ \hline
		outlier 4  & (0.0836, 0.0910) & (0.0313, 0.1416) \\ \hline
		outlier 5  & (0.0862, 0.0932) & (0.0196, 0.0213) \\ \hline
		outlier 6  & (0.0771, 0.0830) & (0.0024, 0.0025) \\ \hline
	\end{tabular}
\end{table}

\begin{figure}[!htbp]
	\centering
	\includegraphics[width=0.95\textwidth]{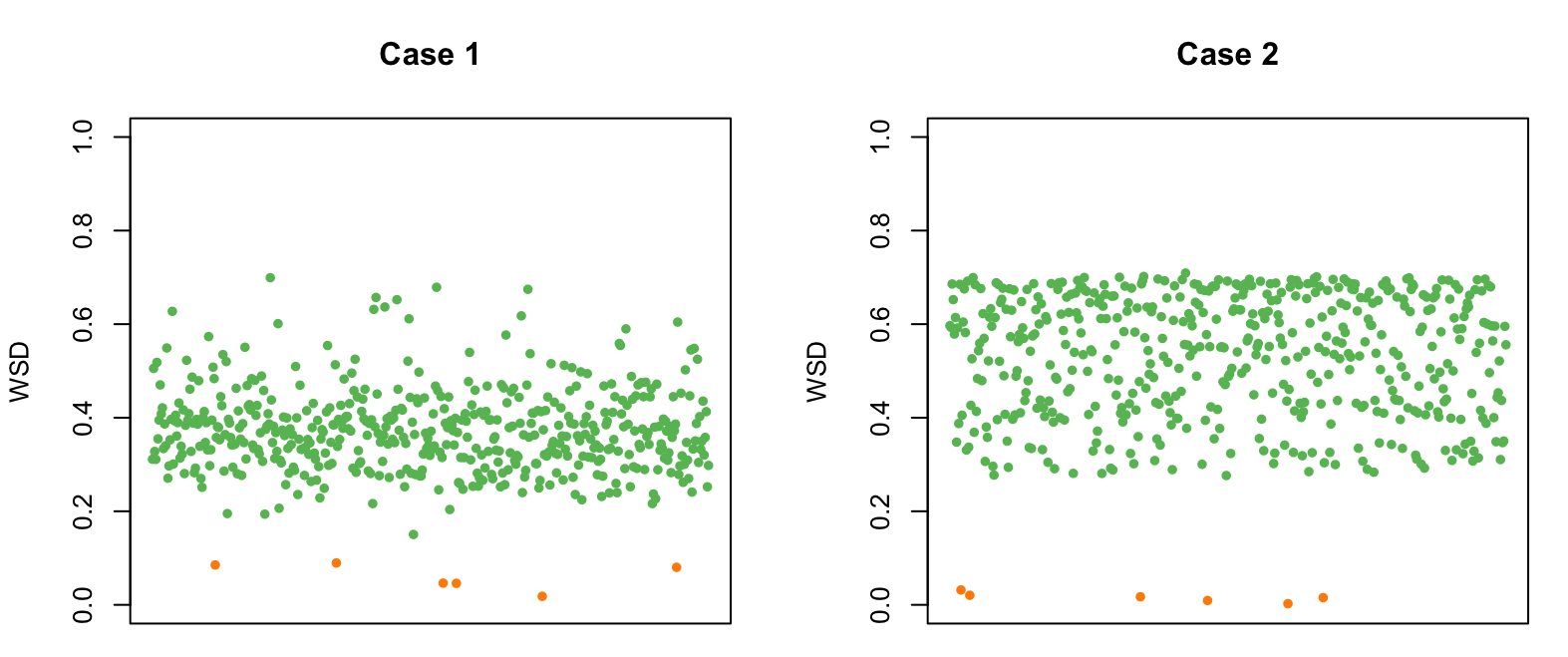}
	\caption{
		Left panel: the distributions are drawn according to Case 1. Right panel: the distributions are drawn according to Case 2. The green dots represent regular distributions from the population $\mathbf{P}$, and the orange dots represent the outlier distributions.
	}
	\label{outlier_fig}
\end{figure}

\subsection{Wasserstein spatial depth vs. functional depth}
\label{subsection:vs:functional}

Hilbertian embedding of probability measures (into a RKHS) is a powerful technique in machine learning and statistics  \citep{sriperumbudur2010hilbert}, which allows for a functional representation of probability measures. 
This approach maps a probability distribution $\mu \in \mathcal{P}(\R^d)$ to an element $f_{\mu}$ in the RKHS $\mathcal{H}_K$ via kernel mean embedding, 
\begin{equation*}
	f_{\mu}(t) = \int_{\R^d} K(x,t) d\mu(x).
\end{equation*}
Here $K : \R^d \times \R^d \to \R$ is a kernel on $\R^d$, yielding the Hilbert space $\mathcal{H}_K$ (the RKHS) of functions from $\R^d$ to $\R$ \cite{berlinet2011reproducing}. 
Given this embedding machinery and an available notion of depth for functional data \citep{fraiman2001trimmed, lopez2009concept}, one could first transform a distribution into a functional data point and then compute its functional depth. However, such an approach neglects the rich geodesic structure of the Wasserstein space. The relative distance and ``ordering'' of pairs of distributions are probably distorted after Hilbertian embedding. The simulation results in this subsection support the above point of view. 

We consider two cases here. In each case, $n=100$ similar distributions (denoted as regular distributions) and four exotic distributions are drawn. By ``similar'' we mean that these $n$ distributions are of the same parametric family and are close to each other in terms of Wasserstein distance. All the distributions are on $\R^3$ so that visualization is possible. We draw $m=300$ data points for each distribution. 
\begin{itemize}
	\item Case 1: the regular distributions are from a collection of spherical Gaussian distributions $\mathcal{N}(\boldsymbol{\mu}, \sigma^2\boldsymbol{I})$ with varying centers $\boldsymbol{\mu} \sim  \mathcal{N}(\boldsymbol{0}, \boldsymbol{I})$ and varying variances $\sigma
	\sim
	\mathrm{Unif}[0.8,1]$;   
	the four exotic distributions are 
	\begin{align*}
		&\big[ \mathrm{Gamma}(3,2) \big]^d,\quad \big[ \mathrm{Weibull}(2,1) \cdot 3\mathrm{Bernoulli}(-1,1,1/2)\big]^d,   \\
		&\big[ \mathrm{Unif}\{-3.5, -2.5, 2.5, 3.5\} \big]^d, \quad
		\mathcal{N}((-3, 3, -3)^\top, \bSigma)\ \text{with}\ \Sigma_{i,j}=0.5^{|i-j|} .
	\end{align*}
	
	\item Case 2: the regular distributions are from a collection of uniform distributions $\big[ \mathrm{Unif}[0, u] \big]^d$ with $u\sim \mathrm{Beta}(2,2) + 1$; the exotic distributions are 
	\begin{align*}
		&\big[\mathrm{Poisson}(1)\big]^d, \quad \big[\mathrm{Exponential}(2) \cdot \mathrm{Bernoulli}(-1,1,1/2)\big]^d, \\
		&\big[\mathrm{Unif}\{ 1,2,3 \}\big]^d, \quad \mathrm{Multinomial}\big(2d, (0.1, 0.2, 0.7)\big).
	\end{align*}
\end{itemize}
Here $\mathrm{Bernoulli}(-1,1,1/2)$ means an independent Bernoulli random variable taking value $-1$ or $+1$ with probability $1/2$.
In each case, the regular distributions are close to each other in the Wasserstein space because
\begin{align*}
	&\mathcal{W}_2\Big(N(\boldsymbol{\mu}_1, \sigma_1^2\boldsymbol{I}), N(\boldsymbol{\mu}_2, \sigma_2^2\boldsymbol{I}) \Big) =\sqrt{\|\boldsymbol{\mu}_1 - \boldsymbol{\mu}_2\|^2 + d(\sigma_1-\sigma_2)^2}
	\lesssim 1.5\sqrt{d},
\end{align*}
\begin{align*}
	\mathcal{W}_2\Big(\big[ \mathrm{Unif}[0, u_1] \big]^d, \big[ \mathrm{Unif}[0, u_2] \big]^d \Big) 
	&= \sqrt{\int_{[0,u_1]^d} \frac{\|\x- (u_2/u_1)\x\|^2}{u_1^d} \mathrm{d}\x} \\
	&= \sqrt{d/3} \ |u_2-u_1| \leq \sqrt{d/3} .
\end{align*}
Also shown in Figure \ref{WSD_Fdepth_1} (a) and  Figure \ref{WSD_Fdepth_2} (a), the regular distributions (represented by green triangles) tend to form a data cloud and are not visually distinguishable, while the exotic distributions are visually distant from the regular distributions. 

We compare the WSD with two types of functional depth, Modified Band Depth (MBD) \citep{lopez2009concept} and Functional Spatial Depth (FSD) \citep{Chakraborty2014spatial} in terms of detecting those exotic distributions. To compute the functional depth of a distribution, we first embed the distribution into a RKHS via a Gaussian kernel, and then compute the functional depth of the embedded function. The MBD and FSD are computed, respectively, by the \texttt{R} packages \texttt{depthTools} and \texttt{fda.usc}.  
As shown in Figures \ref{WSD_Fdepth_1} and \ref{WSD_Fdepth_2}, the WSD is able to discriminate exotic distributions in both cases, while the functional depths are not informative on the ``ordering'' of the distributions. 
\begin{figure}[!htbp]
	\centering
	\includegraphics[width=1\textwidth]{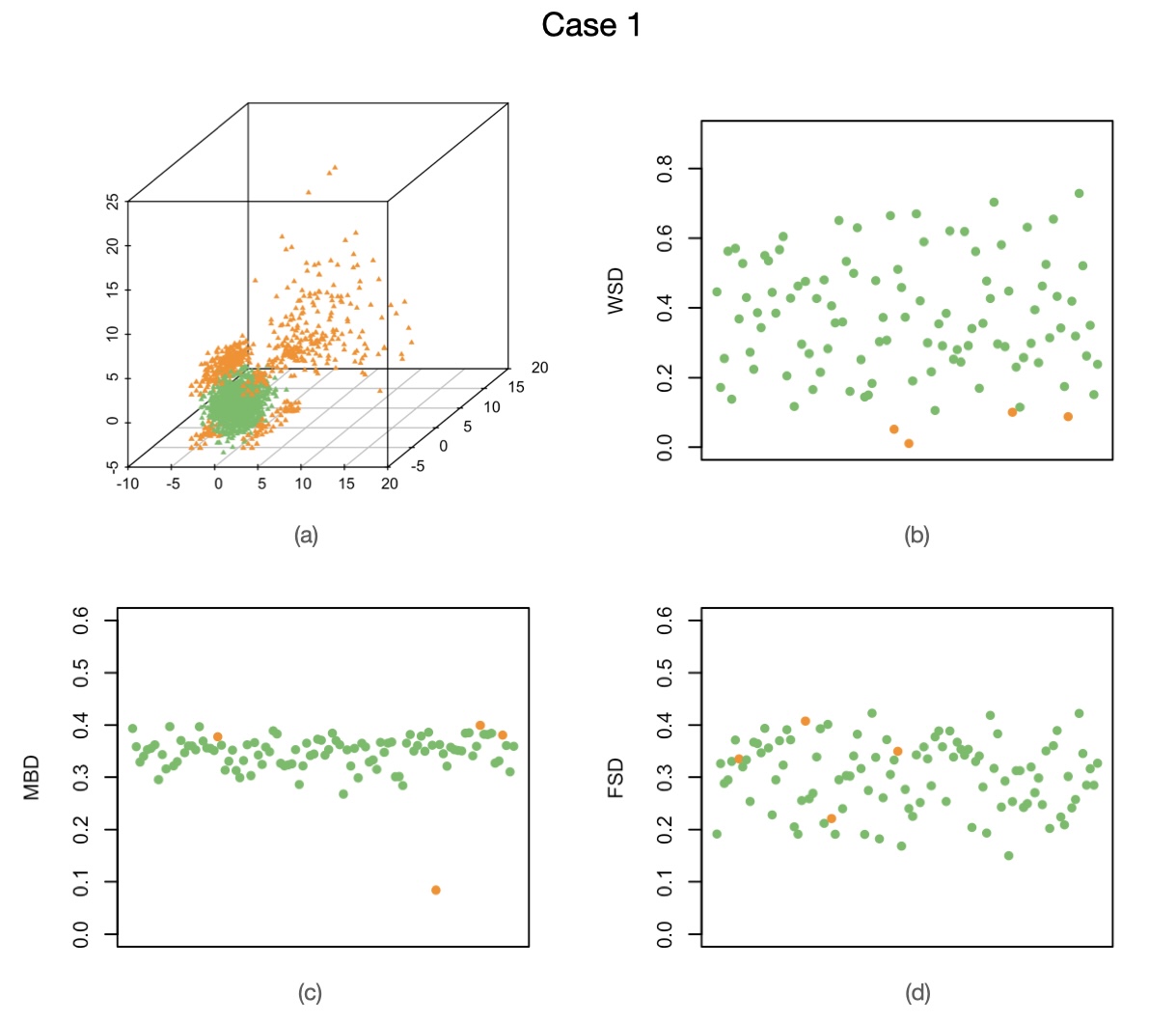}
	\caption{
		(a): The data points are drawn from the distributions of Case 1. The green triangles represent data points from the regular distributions, while orange triangles represent data points from the exotic distributions. (b): The green dots represent the WSD values of the regular distributions, while the orange dots represent the WSD values of the exotic distributions. (c): Each dot represents the MBD of a distribution. The coloring pattern is the same as before. (d):  Each dot represents the FSD of a distribution. The coloring pattern remains the same.
	}
	\label{WSD_Fdepth_1}
\end{figure}

\begin{figure}[!htbp]
	\centering
	\includegraphics[width=1\textwidth]{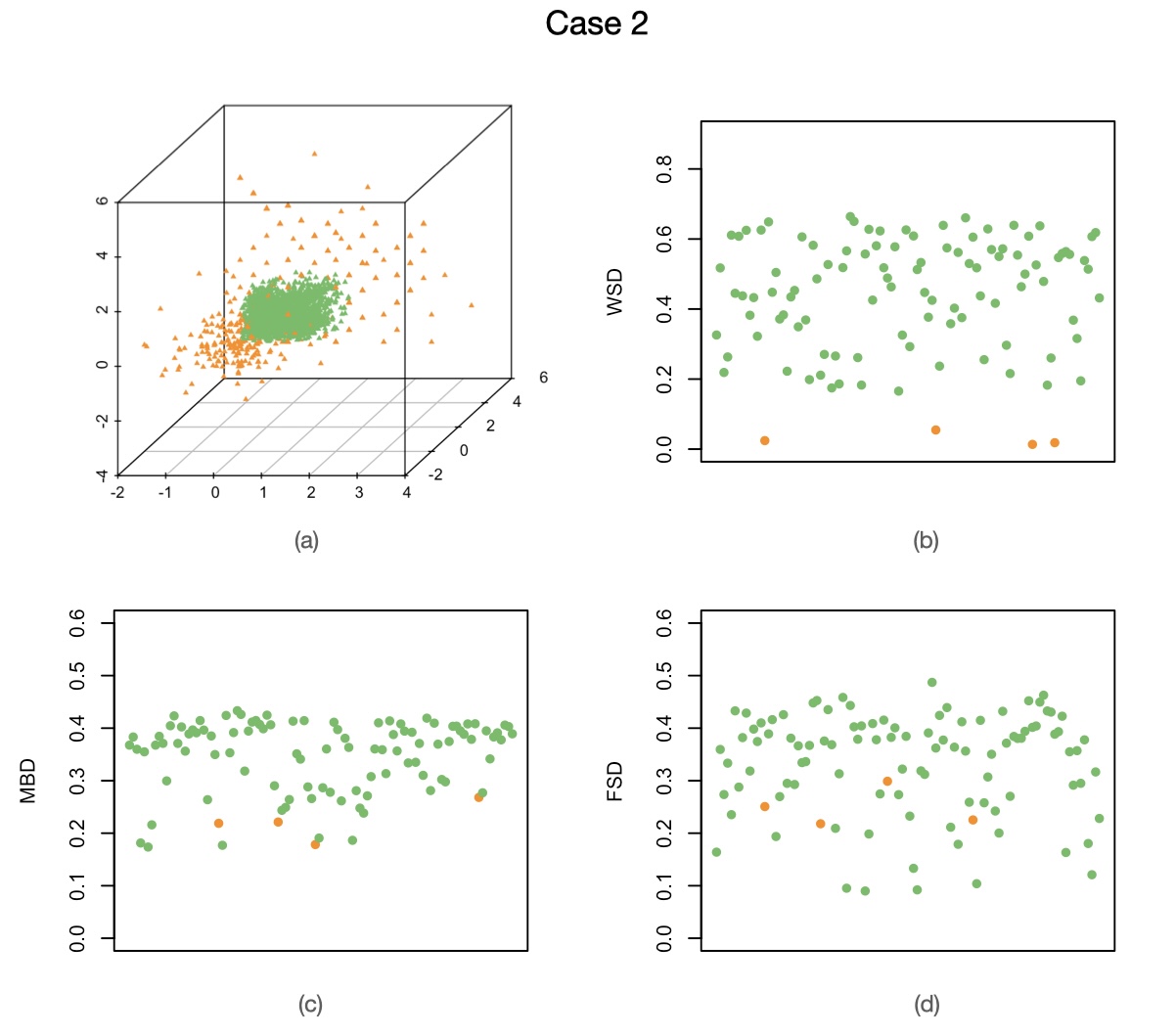}
	\caption{
		(a): The data points are drawn from the distributions of Case 2. The green triangles represent data points from the regular distributions, while orange triangles represent data points from the exotic distributions. (b): The green dots represent the WSD values of the regular distributions, while the orange dots represent the WSD values of the exotic distributions. (c): Each dot represents the MBD of a distribution. The coloring pattern is the same as before. (d):  Each dot represents the FSD of a distribution. The coloring pattern remains the same.
	}
	\label{WSD_Fdepth_2}
\end{figure}
The numerical results show the superiority of the proposed WSD when applied to distribution-valued data objects, which is expected since the WSD is specially designed for distribution-valued data objects and adapts to the geometry of the Wasserstein space.

\subsection{Wasserstein spatial depth vs. general metric depths}
\label{subsection:vs:general_metric_depth}
As discussed in Section \ref{section:comparison}, WSD enjoys more desirable theoretical properties than several general metric depths when adapted to the Wasserstein space. Here we also show empirically that WSD is more informative of the relative ``orderings'' of the distributions when compared to metric Lens depth \cite{Geenens2023} and metric spatial depth \cite{virta2023spatial} adapted to the Wasserstein space. We do not compare with the metric Tukey depth \cite{LopezTukeyMetric} because it is computationally too expensive. 
The comparison is carried out under the same two cases as in Section \ref{subsection:outlier_detect}. 
As shown in Figure \ref{compare_gegeral_metric_depths}, neither metric Lens depth nor metric spatial depth is able to detect outlier distributions. The outlier distributions are well embedded within the regular population. 
\begin{figure}[!htbp]
	\centering
	\includegraphics[width=1\textwidth]{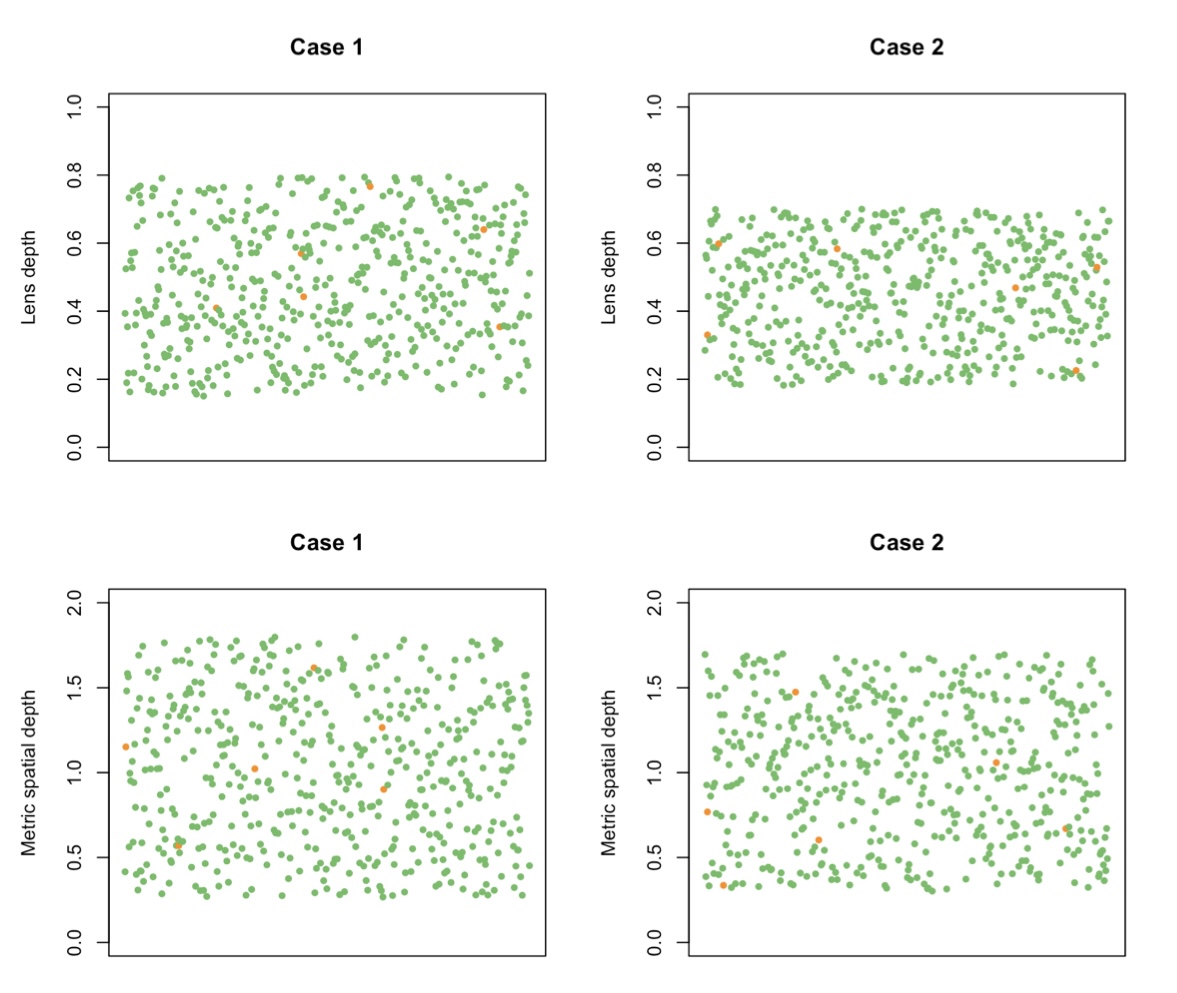}
	\caption{
		Upper left panel: empirical metric Lens depths of the distributions drawn according to Case 1 in Section \ref{subsection:outlier_detect}. Upper right panel:  empirical metric Lens depths of the distributions drawn according to Case 2 in Section \ref{subsection:outlier_detect}. 
		Lower left panel: empirical metric spatial depths of the distributions drawn according to Case 1. Lower right panel:  empirical metric spatial depths of the distributions drawn according to Case 2.
		The green dots represent regular distributions from the population $\mathbf{P}$, and the orange dots represent the outlier distributions.
	}
	\label{compare_gegeral_metric_depths}
\end{figure}

\subsection{Nonparametric testing based on WSD} \label{subsection:simulation:test} 
Here we assess the empirical performance of the testing procedure proposed in Section \ref{section:testing}. The goal is to test $H_0$ against $H_1$ 
\begin{equation*}
	H_0: \mathbf{P}=\mathbf{Q}  \qquad H_1: \mathbf{P}\neq\mathbf{Q}
\end{equation*}
given two sets of empirical distributions, $\mathbf{P}_{n,m}, \mathbf{Q}_{n,m}$. 
Four different cases are considered below. In each case we test varying alternatives $\{ \mathbf{Q}^k\}_{k=0,1,2,\ldots}$ against one null population $\mathbf{P}$. As $k$ gets larger, $\mathbf{Q}^k$ is increasingly different from $\mathbf{P}$.
In each case, we repeat the testing procedure $T=200$ times with $n=200$ distributions sampled from $\mathbf{P}$ and $\mathbf{Q}$, respectively, and $m=200$ data points sampled from each sampled distribution.  
The nominal level is chosen to be $\alpha = 0.05$ across four cases. 

\begin{itemize}
	\item Case 1: ${\bf P}$ is supported on a family of isotropic Gaussian distributions on $\R^2$, $\mathcal{N}(\boldsymbol{\mu}, \sigma^2\boldsymbol{I})$ with varying centers $\boldsymbol{\mu} \sim [\mathrm{Beta}(3,3) ]^4$ and varying variances $\sigma^2 \sim \mathrm{Unif}[3/16, 5/16]$. 
	The alternative populations $\{ \mathbf{Q}^k\}_{k=0,1,2,\ldots}$ are also families of isotropic Gaussian distributions on $\R^2$, $\mathcal{N}(\boldsymbol{\mu}_k, \sigma^2\boldsymbol{I})$ with varying centers $\boldsymbol{\mu}_k \sim [\mathrm{Beta}(3-0.2k, 3-0.2k) ]^4$ and varying variances $\sigma^2 \sim \mathrm{Unif}[3/16, 5/16]$. 
	
	\item Case 2: ${\bf P}$ is supported on a family of coordinate-wise Gamma distributions on $\R^2$, that is, each coordinate follows a $\rm{Gamma}(2, r)$ distribution with fixed shape parameter $2$ and varying rate parameters $r \sim
	\mathrm{Unif}[0,0.4]$, and two coordinates are independent. Each of the alternative populations $\{ \mathbf{Q}^k \}_{k=0, 1,2,\ldots}$ is supported on a family of coordinate-wise Gamma distributions with fixed shape parameter $2$ and varying rate parameters $r_k \sim 
	\mathrm{Unif}[0.03k, 0.4+0.03k]$. 
	
	\item Case 3: ${\bf P}$ is supported on a family of Poisson distributions with varying means $\lambda \sim \mathrm{Binomial}(17, 0.5)$. Each of the alternative populations $\{ \mathbf{Q}^k \}_{k=0, 1,2,\ldots}$ is supported on a family of Poisson distributions with varying means $\lambda_k \sim \mathrm{Binomial}(17-2k, 0.5)$. 
	
	\item Case 4: ${\bf P}$ is supported on a family of ``irregular'' distributions on $\R^3$, denoted by the random vector $Z \in \R^3$. $Z_1 \sim \mathrm{Unif}[0, c]$ with $c \sim  \mathrm{Weibull}(1,2)$; $Z_2 \sim \mathrm{Exp}(\lambda)$ with $\lambda \sim \mathrm{Unif}[1.4, 1.6]$ and $Z_1 \independent Z_2$; $Z_3 = 0.2 Z_1 + 0.1 Z_2 + 0.7 \mathcal{N}(0,1)$.  The $k$-th alternative population $\mathbf{Q}^k$ is distributed as $Z_1^k \sim \mathrm{Unif}[0, (1+0.1k)c_k]$ with $c_k \sim \mathrm{Weibull}(1,2)$; $Z_2^k \sim \mathrm{Exp}(\lambda_k)$ with $\lambda_k \sim \mathrm{Unif}[1.4-0.05k, 1.6-0.05k]$ and $Z_1^k \independent Z_2^k$; $Z_3^k = 0.2 Z_1^k + 0.1 Z_2^k + 0.7 \mathcal{N}(0,1)$.
\end{itemize} 

Shown in Figures \ref{nonparam_testing}, the Type I error is well controlled and the empirical power increases to one as the alternative deviates more and more from the null population. 
\begin{figure}[!htbp]
	\centering
	\includegraphics[width=1\linewidth]{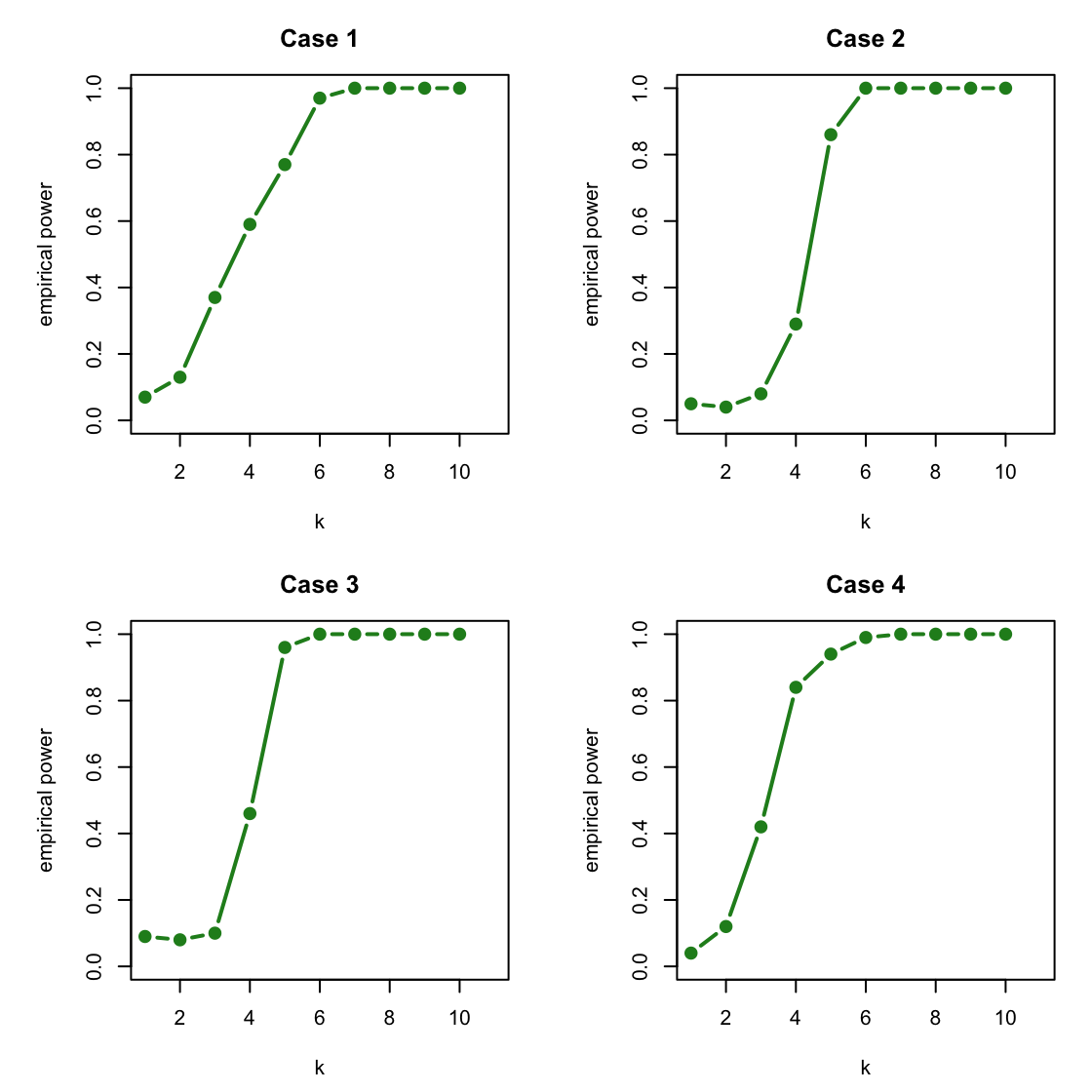}
	\caption{The Type I errors and empirical powers of the testing procedure proposed in Section \ref{section:testing}. Each plot corresponds to one case. In each plot, the first dot represents the Type I error because $\mathbf{P}=\mathbf{Q}$ at this point; the other dots represent empirical powers as $\mathbf{Q}^k$ deviates from $\mathbf{P}$.   }
	\label{nonparam_testing}
\end{figure}

\subsection{Wasserstein spatial depth vs. distance profiles}
\label{subsection:vs:distance_profile}
Distance Profile (DP) \cite{dubey2024metric} is a recently proposed metric to measure the centrality of a point with respect to a distribution in a general metric space. The DP of any point $\omega$ with respect to a distribution $\mu$ in a metric space is a univariate CDF that captures the relative position of $\omega$ with respect to $\mu$. 
The authors of \cite{dubey2024metric} discuss that if the metric space is of strong negative type (which is not known to be the case of the Wasserstein space), the collection of DPs at all points uniquely characterize the distribution $\mu$. 
Then DP was used to construct a permutation test for comparing two distributions in Wasserstein space. 
Here we compare the empirical performances of WSD-based (Section \ref{section:testing}) and DP-based two sample tests.
DP-based testing is conducted using the \texttt{R} package \texttt{ODP}, which is publicly available at \url{https://github.com/yqgchen/ODP}.
Among the four cases below, Case 1 and Case 2 are  the same as those in Section 6.3 of \cite{dubey2024metric} for fair comparison. The simulations below are conducted with $n=400$, $m=200$, and number of repetitions $T=200$ to estimate the empirical power. The nominal level is chosen to be $\alpha = 0.05$ across all four cases. 
\begin{itemize}
	\item Case 1: ${\bf P}$ is supported on a family of two-dimensional Gaussian distributions with fixed covariance matrix $0.25\boldsymbol{I}$ and varying centers, that is, $\mathcal{N}(\boldsymbol{\mu}, 0.25\boldsymbol{I})$ with $\boldsymbol{\mu}\sim \mathcal{N}(\mathbf{0}, 0.25\boldsymbol{I})$. ${\bf Q}$ is also supported on $\{ \mathcal{N}(\boldsymbol{\mu}, 0.25\boldsymbol{I}) \}$ with $\boldsymbol{\mu}\sim \mathcal{N}((\delta,0)^\top, 0.25\boldsymbol{I})$ and an increasing $\delta$.
	\item Case 2: ${\bf P}$ is same as in Case 1 except that $\boldsymbol{\mu}\sim \mathcal{N}(\mathbf{0}, 0.16\boldsymbol{I})$. ${\bf Q}$ is same as in Case 1 except that $\boldsymbol{\mu}\sim \mathcal{N}(\mathbf{0}, (0.4+\delta)^2\boldsymbol{I})$ with an increasing $\delta$.
	\item Case 3: ${\bf P}$ is supported on a family of two-dimensional centered Gaussian distributions with covariance matrix $\sigma_P^2\boldsymbol{I}$.  ${\bf Q}$ is also supported on a family of two-dimensional centered Gaussian distributions but with covariance matrix $\sigma_Q^2\boldsymbol{\Sigma}$, where $\Sigma_{ij} = \delta^{|i-j|}$ with an increasing $\delta$.  
	Here $\sigma_P, \sigma_Q \sim \rm{Unif}[0.1, 0.3]$.
	\item Case 4: ${\bf P}$ and $\mathbf{Q}$ are supported on  families of Gaussian mixtures on $\R^2$, \textit{i.e.}, 
	\begin{align*}
		&w_1 \mathcal{N}(\mathbf{c}_1, 0.15^2 \boldsymbol{I}) +  w_2 \mathcal{N}(\mathbf{c}_2, 0.15^2 \boldsymbol{I}) +  w_3 \mathcal{N}(\mathbf{c}_3, 0.15^2 \boldsymbol{I}) +  w_4 \mathcal{N}(\mathbf{c}_4, 0.15^2 \boldsymbol{I}), \\
		& \mathbf{c}_1 = (0, 0)^\top, \quad \mathbf{c}_2 = \big(0.4\cos(-\pi/6), 0.4\sin(-\pi/6)\big)^\top,\\ &\mathbf{c}_3 = \big(0.4\cos(7\pi/6), 0.4\sin(7\pi/6)\big)^\top, \quad \mathbf{c}_4 = (0, 0.4)^\top
	\end{align*}
	with varying weights $w_1, w_2, w_3, w_4$. The weights $\mathbf{w}^P= (w_1^P, w_2^P, w_3^P, w_4^P)$ of $P\sim \mathbf{P}$ follow a Dirichlet distribution with $\boldsymbol{\alpha}^P = (20, 20, 20, 20)^\top$; the weights $\mathbf{w}^Q$ of $Q\sim \mathbf{Q}$ follow a Dirichlet distribution with $\boldsymbol{\alpha}^Q = (20-\delta, 20-\delta, 20-\delta, 20-\delta)^\top$.
\end{itemize}
\begin{figure}[!htbp]
	\centering
	\includegraphics[width=1\linewidth]{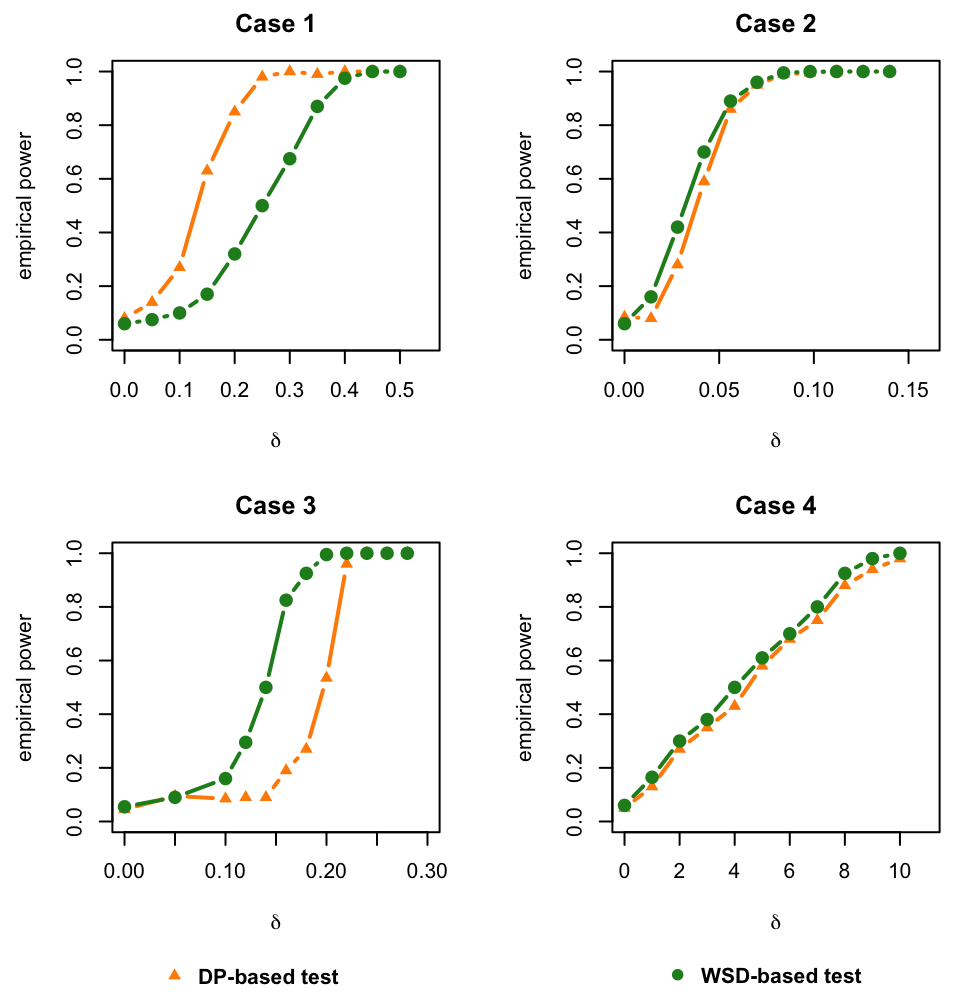}
	\caption{The empirical powers of the nonparametric two-sample tests based on WSD and on DP. Each plot corresponds to one case. The orange triangles correspond to the Type I errors and empirical powers of DP-based tests. The green dots correspond to the Type I errors and empirical powers of WSD-based tests. }
	\label{WSD_vs_DP}
\end{figure}
Shown in Figure \ref{WSD_vs_DP}, WSD has slightly better performances than DP in Cases 2 and 4.
WSD significantly outperforms DP in case 3, while the converse holds in Case 1.
Furthermore, we observed in extensive simulations that WSD-based tests maintain greater robustness across different levels of point dispersion in Wasserstein space. They are also more stable when the pairwise Wasserstein distances of the points vary, \textit{i.e.}, scaled invariance.

\section{Application}
\label{section:application}
Nowadays, climate change is a major concern across the society.  Considerable amount of information can be extracted from longitudinal series of daily temperatures. We apply the notion of WSD to explore a dataset recording European daily temperatures during the past two centuries.  

The data is collected from the public database ``European Climate Assessment and Dataset''\footnote{ https://www.ecad.eu/dailydata/index.php.}. 
It contains the daily average temperatures collected at $40$ meteorological stations located across Europe, including Austria, Croatia, Czech Republic, Denmark, Finland, Germany, Sweden, and United Kingdom, from year 1874 to 2023.  These $40$ meteorological stations cover a broad range of Europe and are representative of the region. The goal is to explore the temperature change over the years. 

We consider monthly temperatures obtained by averaging daily temperatures per month. For each weather station, we obtain a $12$ monthly-average temperature curve,  represented by a vector in $\R^{12}$. Hence, the monthly temperatures of each year correspond to one distribution on $\R^{12}$. For a particular year, the $12$ monthly temperatures (forming a vector in $\R^{12}$) collected at each station act as a sample point drawn from this distribution. Finally, we gather $150$ distributions (from year 1874 to year 2023) with each distribution associated to $40$ sample points (for the $40$ meteorological stations), and where each sample point is a $12$-dimensional real vector. In the following we assume that the distributions are drawn each year independently. \vskip .1in
Contrary to other work, we do not consider the annual evolution of the temperatures for a particular place but rather analyze the different temperature curves at all locations at the same time. We aim at understanding weather change at a global scale by considering the $40$ different locations as representatives of the European climate.

Within this framework, we compute the empirical WSDs of these $150$ distributions as in \eqref{empirical_WSD_1}. Several outlier years are identified based on their excessively small WSDs. As we discuss next, these identified ``abnormal years'' are consistent with historical records, which further validate the practical utility of the WSD.\\ For the reproducibility of our research, the code for data analysis is publicly available at \url{https://github.com/YishaYao/Wasserstein-Spatial-Depth/tree/main}.

\begin{figure}[!htbp]
	\centering
	\includegraphics[width=1\textwidth]{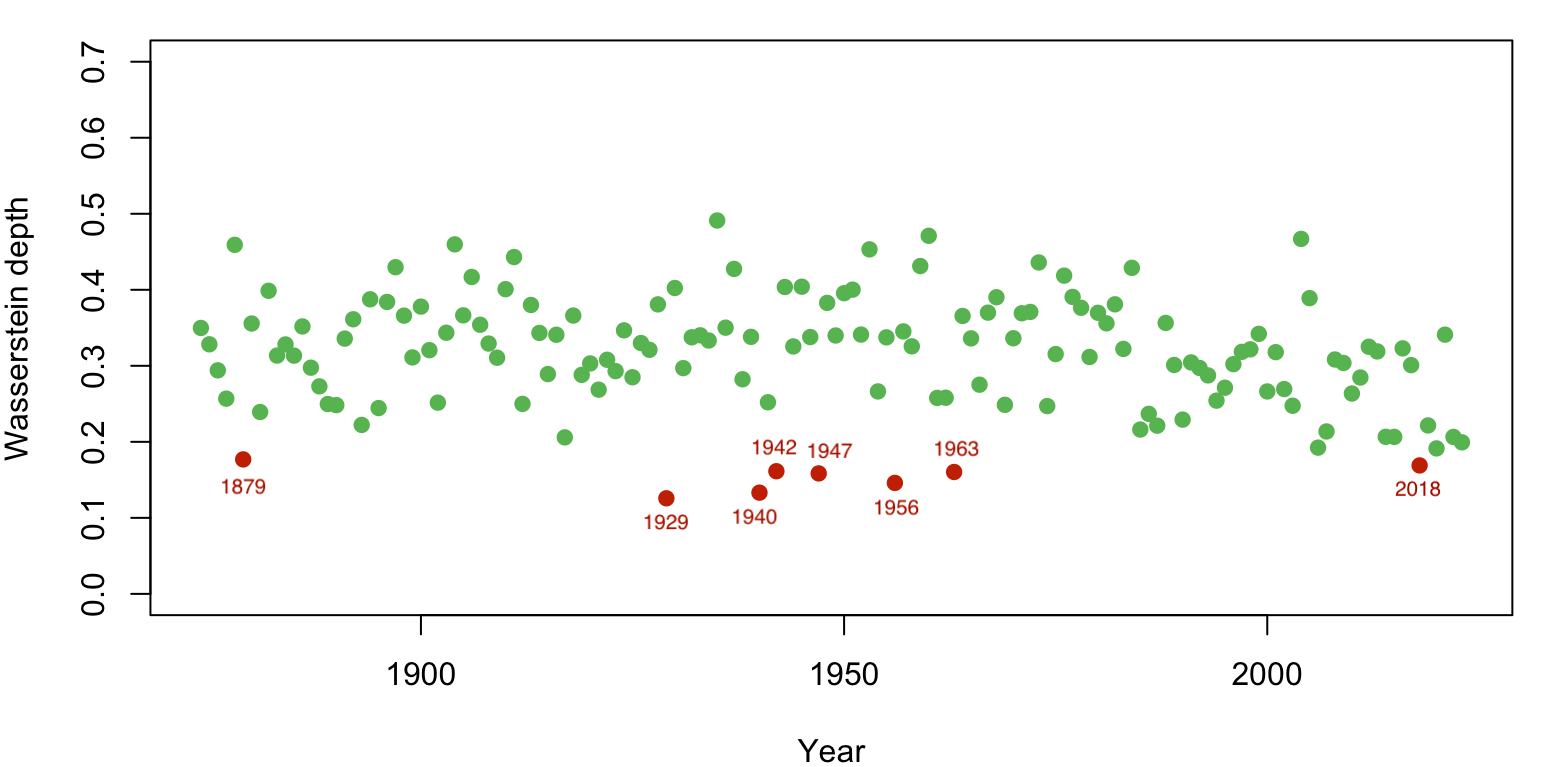}
	\caption{The green dots represent regular/representative/central distributions, while the red dots correspond to the distributions near the outskirt and “far” from the center.}
	\label{application-1}
\end{figure}
The values of the $150$ empirical WSDs are shown in Figure \ref{application-1}.  The lowest $5\%$ values, which we consider as outliers, are colored red, and the corresponding years are also marked. Based on empirical WSDs, the temperatures at years 1879, 1929, 1940, 1942, 1947, 1956, 1963, and 2018 are more ``exotic'' or near outskirt. After searching among historical documentations, we indeed fond evidences to support this discovery. 
Year 1879 was an extremely cold year, featured with a unusually snowy winter (November and December). 
The first two months of 1929 were recorded as one of the coldest winters in Europe during the past century with temperature reaching down to -30°C in central Europe. 
Both year 1940 and year 1942 were marked by severe winters with dramatic ice storms, and year 1942 had a cool summer.
The weather in year 1947 was unusually cold in winter and record-breaking hot in summer.
Europe experienced severe cold waves in both winters of 1956 and 1963. The well-known 2018 European drought and heat wave led to record-breaking temperatures and wildfires in many parts of Europe. 

To get a better view on how these years' temperatures differ from other regular years', we compare the four most ``exotic'' years with the most regular years. We pick the two years with the largest WSDs as our ``regular years'', year 1935 and year 1960. 
In each plot of Figure \ref{application2}, the bundle of green curves represents the temperature trends of the $40$ locations in the regular years (1935 and 1960), while the bundle of red curves corresponds to one particular outlier year. The green bundle and red bundle do exhibit clear visual differences in temperature trends over the months. 
\begin{figure*}[!htbp] 
	\subfloat[outlier year 1929]{%
		\includegraphics[width=.45\linewidth]{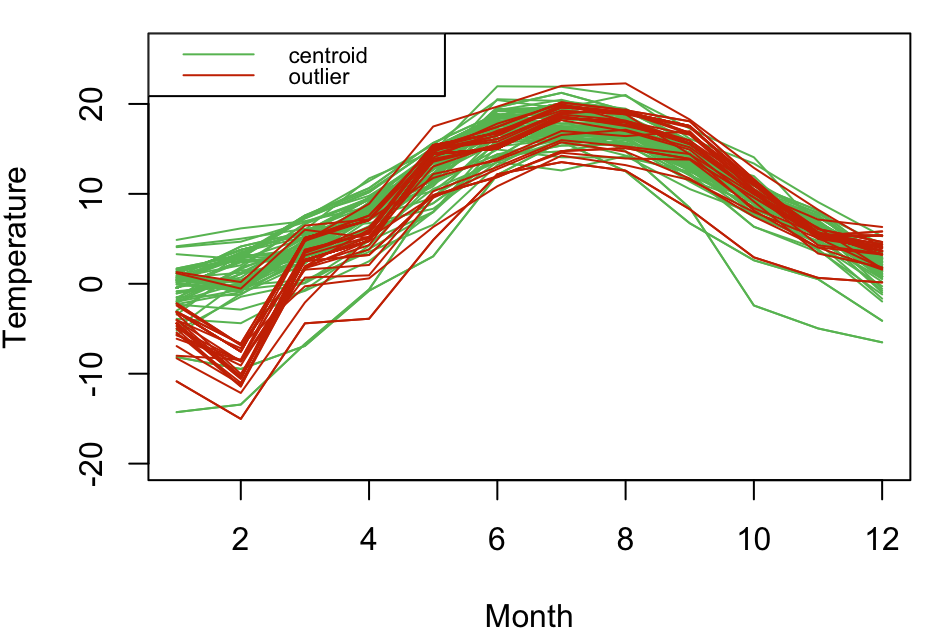}%
	}\hfill
	\subfloat[outlier year 1940]{%
		\includegraphics[width=.45\linewidth]{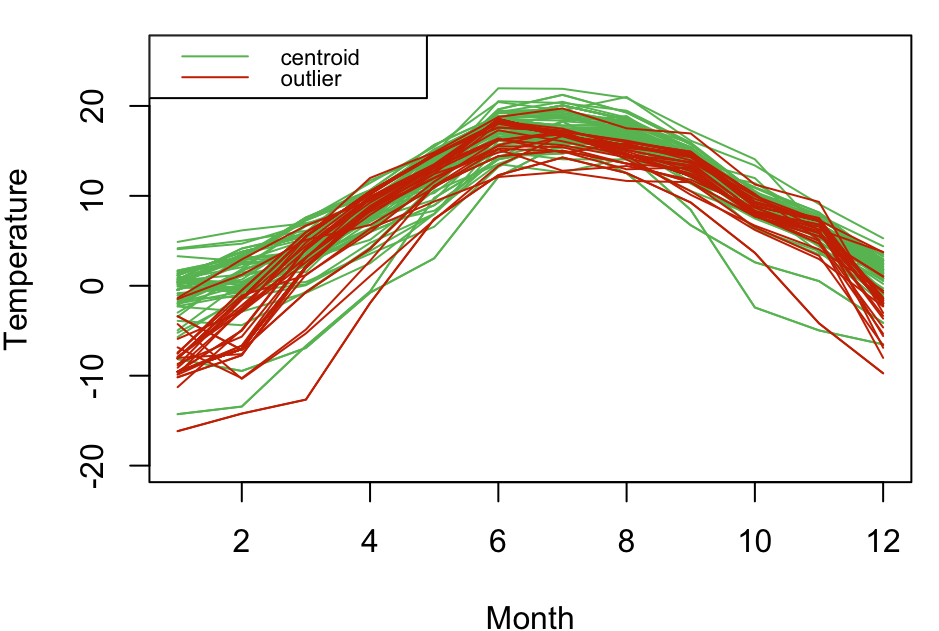}%
	}\\
	\subfloat[outlier year 1956]{%
		\includegraphics[width=.45\linewidth]{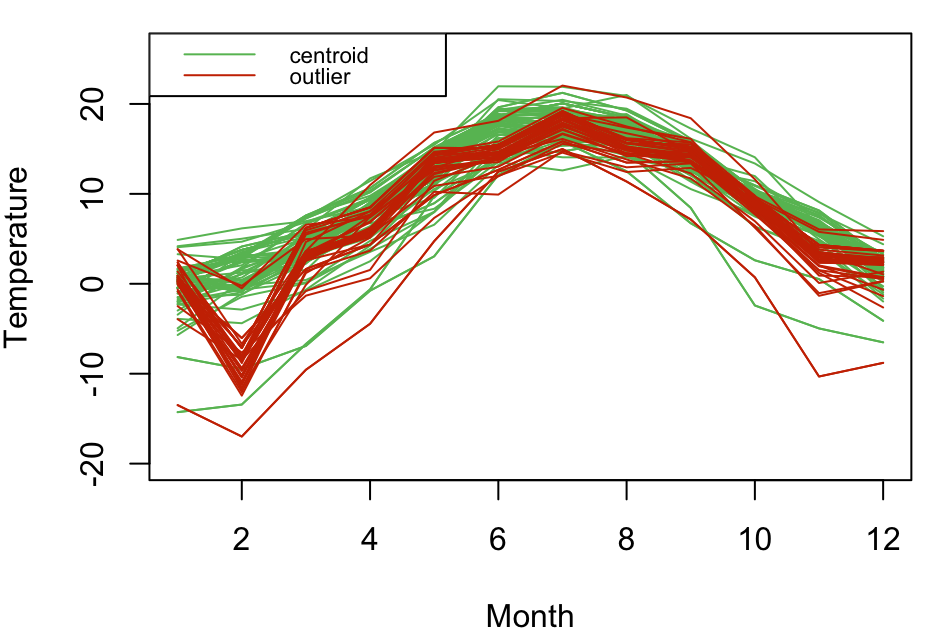}%
	}\hfill
	\subfloat[outlier year 2018]{%
		\includegraphics[width=.45\linewidth]{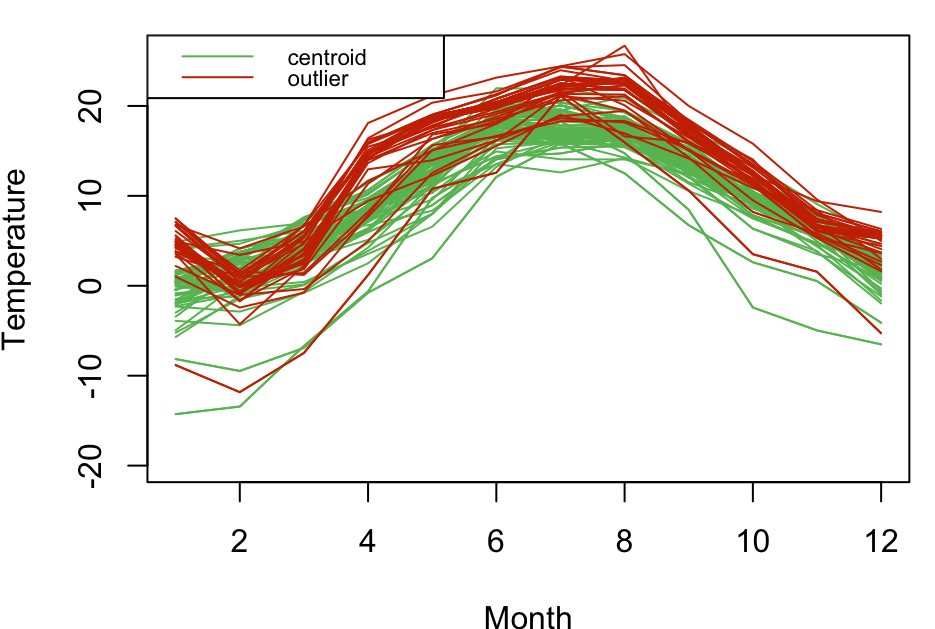}%
	}
	\caption{Comparisons between the most regular years 1935 and 1960 (the two years with the largest WSDs) and four outlier years. In each plot, the bundle of green curves represents the temperature trends in years 1935 and 1960 at 40 locations in Europe (totally $80$ green curves); the bundle of red curves represents the temperature trends in an outlier year at the same 40 locations (totally $40$ red curves).  }
	\label{application2}
\end{figure*}

\section{Further directions and future work}
\label{section:future:directions}

In this work, we propose a new notion of depth on the Wasserstein space. We demonstrate that it preserves critical properties of conventional statistical depths. Additionally, it has a straightforward empirical counterpart that can be easily computed from sample data and is asymptotically consistent.  Numerical simulations and real data analysis further support its practical utility. Importantly, in Section \ref{subsection:vs:functional}, we demonstrate that simply embedding distributions into linear Hilbert spaces, and relying on existing FDA methods, is not satisfying. In contrast, the WSD proves to be very informative in this section.

Note that we have defined the new notion of WSD,
$ {\rm WSD}(Q; {\bf P})$,
for absolutely continuous distributions $Q$ and where ${\bf P}$ can be arbitrary. This is because our approach exploits the definition of the geodesics in the Wasserstein space (see Section~\ref{subsec:WassersteinGeom}). 

When $Q$ is not absolutely continuous, the geodesic between $Q$ and another distribution $P$ might be not unique.
In this case, the set of geodesics is given by the laws of the random vectors $(1-t)\X+ t\Y$ where the law of the random vector $(\X,\Y)$, namely $\pi_{P,Q}$, is an optimal transport plan, as in \eqref{Wassertesin} with $p=2$.
Hence, uniqueness of the geodesics is equivalent to uniqueness of the transport plans.

Thus, if with ${\bf P}$-probability one  $P \sim {\bf P}$ is absolutely continuous, even if $Q$ is not absolutely continuous, the geodesics are unique and, following the route of Section~\ref{subsec:WassersteinGeom},  we can still define a notion of depth as follows: 
\begin{equation} \label{eq:new:depth}
	\rm WSD^{\text{discr}}(Q; {\bf P})
	:=
	1-
	\left( 
	\underset{{(P,P')\sim \mathbf{P}\otimes \mathbf{P}}}{ \E}
	\left[ \int  {\left\langle
		\frac{\x-\y }{W_2(P,Q)} , 
		\frac{\x-\y' }{W_2(P',Q)} \right\rangle }d\pi_{Q,P,P'}(\x,\y, \y' ) \right]\right)^{1/2},
\end{equation} 
where $\pi_{Q,P,P'}(\x,\y, \y' )$ is the distribution of a vector $(\X,\Y, \Y')$ with $(\X,\Y)\sim \pi_{Q,P}$, $(\X,\Y')\sim \pi_{Q,P'}$ and $\Y$ and $\Y'$ are independent given $\X$.
Here $\pi_{Q,P}$ (resp. $\pi_{Q,P'}$) is the unique optimal transport plan from $Q$ to $P$ (resp. $P'$).
This provides a  definition of WSD for any distribution $Q$ when ${\bf P}$ samples a.s. absolutely continuous distributions.
It can be seen, similarly as the proof of Theorem~\ref{Theo:Main}, that $ \rm WSD^{\text{discr}}(Q; {\bf P})$ would be $[0,1]$-valued (and the quantity in the square root being non-negative). 

We leave for future exploration the practical utility of this complementary WSD, along with the task of establishing analogous favorable mathematical properties as those demonstrated in this paper. Note that the depth in \eqref{eq:new:depth} coincides with $ {\rm WSD}(Q; {\bf P})$ in the special case where both $Q$ and (a.s.) the samples from {\bf P} are absolutely continuous. This can be seen from the arguments leading to \eqref{eq:An:inner:product} in the Appendix. 

Finally, for computational reasons, the statistics and machine learning community has also focused on regularized optimal transport \citep{cuturi2013sinkhorn,peyre2019computational}. It is an interesting prospect as well to extend the WSD to regularized optimal transport.



\appendix

\section{Proof of Theorem~\ref{Theo:Main}}

\subsection{Values in $[0,1]$}\label{subsection:liesIn01}   Here we prove that $\mathrm{SD}(Q; {\bf P})\in [0,1]$ for all $Q\in  \mathcal{P}^{a.c}_2(\R^d)$, which is probably the easiest statement to prove. To prove the upper bound we realize that 
$$ \left(\int{ \left\|\E_{P\sim \mathbf{P}}\left[  \frac{\x-T_{Q,P}(\x) }{\mathcal{W}_2(P,Q)}\right]\right\|^2} \mathrm{d} Q(\x)\right)^{\frac{1}{2}} \geq 0,$$
so that 
$$ \mathrm{SD}(Q; {\bf P})=1-  \left(\int{ \left\|\E_{P\sim \mathbf{P}}\left[  \frac{\x-T_{Q,P}(\x) }{\mathcal{W}_2(P,Q)}\right]\right\|^2} \mathrm{d} Q(\x)\right)^{\frac{1}{2}} \leq 1 . $$
To prove the lower bound we observe that 
\begin{align*}
	& \left(\int{ \left\|\E_{P\sim \mathbf{P}}\left[  \frac{\x-T_{Q,P}(\x) }{\mathcal{W}_2(P,Q)}\right]\right\|^2} \mathrm{d} Q(\x)\right)^{\frac{1}{2}} \\
	&=\sup_{\|G\|_{L^2(Q)}\leq 1} \left(\int{ \left\langle \E_{P\sim \mathbf{P}}\left[  \frac{\x-T_{Q,P}(\x)  }{\mathcal{W}_2(P,Q)}\right],  G(\x)\right\rangle} \mathrm{d} Q(\x)\right) \\
	&= \sup_{\|G\|_{L^2(Q)}\leq 1} \left(\int{  \E_{P\sim \mathbf{P}}\left[  \frac{\left\langle\x-T_{Q,P}(\x),G(\x)\right\rangle  }{\mathcal{W}_2(P,Q)}\right] } \mathrm{d} Q(\x)\right) \\
	& \leq   \E_{P\sim \mathbf{P}}\left[  \frac{ \sup_{\|G\|_{L^2(Q)}\leq 1} \int{ \left\langle\x-T_{Q,P}(\x),G(\x)\right\rangle } \mathrm{d} Q(\x) }{\mathcal{W}_2(P,Q)}\right] 
	\\
	&=   \E_{P\sim \mathbf{P}}\left[  \frac{ \|I-T_{Q,P}\|_{L^2(Q)}}{\mathcal{W}_2(P,Q)}\right] =  \E_{P\sim \mathbf{P}}\left[  \frac{ \mathcal{W}_2(P,Q)}{\mathcal{W}_2(P,Q)}\right] =1,
\end{align*}
so that 
$$ \mathrm{SD}(Q; {\bf P})=1-  \left(\int{ \left\|\E_{P\sim \mathbf{P}}\left[  \frac{\x-T_{Q,P}(\x) }{\mathcal{W}_2(P,Q)}\right]\right\|^2} \mathrm{d} Q(\x)\right)^{\frac{1}{2}} \geq 0 . $$

\subsection{Transformation invariance}
Theorem~1.2 in \cite{kloeckner2010geometric} describes the group of isometries of $(\mathcal{P}_2(\R^d),\mathcal{W}_2)$ for $d \ge 2$. Any isometry $F$ can be written as the composition of   $\Phi(\varphi)$
and a trivial isometry. Recall that $\Phi(\varphi):P\mapsto \Phi(\varphi)(P)$  where $\varphi:\R^d\to \R^d$ is a linear isometry and $\Phi(\varphi)(P)$ is the
law of the random variable 
$$ \varphi(\X-\E[\X])+\E[\X], \quad {\rm for
}\ \X\sim P . $$

Therefore, it is enough to show that the WSD is invariant with respect to trivial isometries and isometries of type $\Phi(\varphi)$ for some linear isometry $\varphi:\R^d\to \R^d$.

\paragraph*{Invariance under trivial isometries}
Let  $\A$ be a $d\times d$ orthogonal matrix and ${\bf b}\in \R^d$. We write 
$$ f_{\A,{\bf b}}(\x)=\A\x+{\bf b}.   $$
The mapping
$$ S_P=f_{\A,{\bf b}} \circ T_{Q,P} \circ (f_{\A,{\bf b}})^{-1}:\x\mapsto  \A\, T_{Q,P}(\A^T(\x-{\bf b}) ) +{\bf b}$$
is the a.s.\ defined gradient of a convex function and (by construction) pushes $(f_{\A,{\bf b}})_\# Q $ forward to $(f_{\A,{\bf b}})_\# P $. 
Therefore, $S_P$ is the optimal transport map from  $(f_{\A,{\bf b}})_\# Q $ forward to $(f_{\A,{\bf b}})_\# P $ (cf.\ \cite{McCann}). 
Hence, the following holds for the induced isometry $F:P\mapsto F(P)=(f_{\A,{\bf b}})_{\#} P$:
\begin{align*}
	\mathrm{SD}(F(Q); F_\# {\bf P})&=1-  \left(\int{ \left\|\E_{P}\left[  \frac{\x-S_P(\x)}{\mathcal{W}_2(P,Q)}\right]\right\|^2} \mathrm{d}((f_{\A,{\bf b}})_{\#} Q)(\x)\right)^{\frac{1}{2}}\\
	&=1-  \left(\int{ \left\|\E_{P}\left[  \frac{f_{\A,{\bf b}}(\x)-f_{\A,{\bf b}} \circ T_{Q,P}(\x) }{\mathcal{W}_2(P,Q)}\right]\right\|^2} \mathrm{d}Q(\x)\right)^{\frac{1}{2}}\\
	&=1-  \left(\int{ \left\|\E_{P}\left[  \frac{\A(\x- T_{Q,P}(\x)) }{\mathcal{W}_2(P,Q)}\right]\right\|^2} \mathrm{d}Q(\x)\right)^{\frac{1}{2}}\\
	&=1-  \left(\int{ \left\|\A\,\E_{P}\left[  \frac{\x- T_{Q,P}(\x) }{\mathcal{W}_2(P,Q)}\right]\right\|^2} \mathrm{d}Q(\x)\right)^{\frac{1}{2}}\\
	&=1-  \left(\int{ \left\|\E_{P}\left[  \frac{\x- T_{Q,P}(\x) }{\mathcal{W}_2(P,Q)}\right]\right\|^2} \mathrm{d}Q(\x)\right)^{\frac{1}{2}}=\mathrm{SD}(Q;  {\bf P}).
\end{align*}
This proves the invariance  under trivial isometries.

\paragraph*{Invariance under  isometries of type $\Phi(\varphi)$} Let $\varphi$ be a linear isometry. Then the mapping $S_P$ solving  
$$ S_P(\varphi(\x-\E_{\X\sim Q}[\X])+\E_{\X\sim Q}[\X])=  \varphi(T_{Q,P}(\x)-\E_{\Y\sim P}[\Y])+\E_{\Y\sim P}[\Y] $$
is, as in the previous case, the optimal transport map from $\Phi(\varphi)(Q)$ to $\Phi(\varphi)(P)$. Then it holds that 
\begin{align*}
	\mathrm{SD}(\Phi(\varphi)(Q); (\Phi(\varphi))_\# {\bf P})&=1-  \left(\int{ \left\|\E_{P\sim  {\bf P} }\left[  \frac{\x-S_P(\x)}{\mathcal{W}_2(P,Q)}\right]\right\|^2} \mathrm{d}\Phi(\varphi) (Q))(\x)\right)^{\frac{1}{2}}\\
	&=1-  \bigg(\int   \bigg\|\E_{P\sim {\bf P}}  \bigg[  \frac{\varphi(\x-\E_{\X\sim Q}[\X])+\E_{\X\sim Q}[\X]}{\mathcal{W}_2(P,Q)}  \\
	&\qquad \qquad -\frac{\varphi(T_{Q,P}(\x)-\E_{\Y\sim P}[\Y])+\E_{\Y\sim P}[\Y]}{\mathcal{W}_2(P,Q)}\bigg]  \bigg\|^2 \mathrm{d} Q(\x)  \bigg)^{\frac{1}{2}}.  
\end{align*}
As $\varphi$ is linear, we get the equality 
\begin{align*}
	\mathrm{SD}(\Phi(\varphi)(Q); (\Phi(\varphi))_\# {\bf P})=
	&1-  \bigg(\int   \bigg\|\E_{P\sim {\bf P}}  \bigg[  \frac{\varphi(\x-T_{Q,P}(\x)-\E_{\X\sim Q}[\X]+\E_{\Y\sim P}[\Y])}{\mathcal{W}_2(P,Q)}\\
	&\qquad \qquad\qquad \qquad +\frac{\E_{\X\sim Q}[\X]-\E_{\Y\sim P}[\Y]}{\mathcal{W}_2(P,Q)}\bigg]  \bigg\|^2 \mathrm{d} Q(\x)  \bigg)^{\frac{1}{2}}\\
	=&1-  \bigg(\int   \bigg\|\varphi \left(\E_{P\sim {\bf P}}  \bigg[  \frac{\x-T_{Q,P}(\x)-\E_{\X\sim Q}[\X]+\E_{\Y\sim P}[\Y])}{\mathcal{W}_2(P,Q)}\bigg]\right)
	\\
	&\qquad \qquad\qquad \qquad +\E_{P\sim {\bf P}}  \bigg[\frac{\E_{\X\sim Q}[\X]-\E_{\Y\sim P}[\Y]}{\mathcal{W}_2(P,Q)}\bigg]  \bigg\|^2 \mathrm{d} Q(\x)  \bigg)^{\frac{1}{2}}.
\end{align*}
Develop the squares and use the fact that $\varphi$ is an isometry to obtain 
\begin{align*}
	&\mathrm{SD}(\Phi(\varphi)(Q); (\Phi(\varphi))_\# {\bf P})\\
	=
	&1-  \bigg(\int \bigg\{  \bigg\| \E_{P\sim {\bf P}}  \bigg[  \frac{\x-T_{Q,P}(\x)}{\mathcal{W}_2(P,Q)}\bigg] \bigg\|^2+ 2\bigg\|\E_{P\sim {\bf P}}  \bigg[  \frac{\E_{\X\sim Q}[\X]-\E_{\Y\sim P}[\Y]}{\mathcal{W}_2(P,Q)} \bigg] \bigg\|^2\\
	& +2\bigg\langle \E_{P\sim {\bf P}}  \bigg[  \frac{\x-T_{Q,P}(\x)}{\mathcal{W}_2(P,Q)}\bigg], \E_{P\sim {\bf P}}  \bigg[  \frac{\E_{\Y\sim P}[\Y]-\E_{\X\sim Q}[\X]}{\mathcal{W}_2(P,Q)}\bigg]\bigg\rangle \\
	&  -2\bigg\langle \varphi\left(\E_{P\sim {\bf P}}  \bigg[  \frac{\x-T_{Q,P}(\x)}{\mathcal{W}_2(P,Q)}\bigg]\right), \E_{P\sim {\bf P}}  \bigg[  \frac{\E_{\Y\sim P}[\Y]-\E_{\X\sim Q}[\X]}{\mathcal{W}_2(P,Q)}\bigg]\bigg\rangle \\
	& -2\bigg\langle \varphi\left(\E_{P\sim {\bf P}}  \bigg[  \frac{\E_{\Y\sim P}[\Y]-\E_{\X\sim Q}[\X]}{\mathcal{W}_2(P,Q)}\bigg]\right), \E_{P\sim {\bf P}}  \bigg[  \frac{\E_{\Y\sim P}[\Y]-\E_{\X\sim Q}[\X]}{\mathcal{W}_2(P,Q)}\bigg]\bigg\rangle \bigg\}\mathrm{d}Q(\x)  \bigg)^{\frac{1}{2}}.
\end{align*}
The second term of the sum cancels with the third and the fourth with the last one as a consequence of Fubini's theorem, the linearity of $\varphi$ and the fact that $(T_{Q,P})_\#Q=P$. 
Therefore, the result follows.

\subsection{Vanishing at infinity}
The goal of this section is to prove that
$ \mathrm{SD}(Q_n; {\bf P}) \to 0 $ as $ \mathcal{W}_2(Q_n,P)\to \infty$
for one $P\in \mathcal{P}_2(\R^d)$. 
\begin{Remark}\label{RemarkOneToAll}
	Note that $\mathcal{W}_2(Q_n,P)\to \infty$ implies that for any other $P'\in \mathcal{P}_2(\R^d)$, 
	$$ \mathcal{W}_2(Q_n,P') \geq \mathcal{W}_2(Q_n,P)-\mathcal{W}_2(P',P) \to + \infty. $$
	Moreover, for any compact set $K$, 
	$$ \inf_{P\in K} \mathcal{W}_2(Q_n,P)\to + \infty. $$
\end{Remark}
Let $\{Q_n\}_{n\in \N}\subset \mathcal{P}^{a.c}_2(\R^d)$ be such that $ \mathcal{W}_2(Q_n,P)\to \infty$ for all $P\in \mathcal{P}_2(\R^d)$. Recall that 
$$ \mathrm{SD}(Q_n; {\bf P}):=1-  \left(\int{ \left\|\E_{P\sim \mathbf{P}}\left[  \frac{\x-T_{Q_n ,P}(\x) }{\mathcal{W}_2(P,Q_n)}\right]\right\|^2} \mathrm{d}Q_n (\x)\right)^{\frac{1}{2}} $$
with the convention 
$ \frac{\x-T_{Q_n ,P}(\x) }{\mathcal{W}_2(P,Q_n)} =\boldsymbol{0}$
if ${\mathcal{W}_2(P,Q_n)} =0$. First we want to get rid of this last pathological case. 
Let
\begin{equation}
	\label{eq:An}
	A_n:=\int{\left\|\E_{P\sim \mathbf{P}}\left[  \frac{\x-T_{Q_{n} ,P}(\x) }{\mathcal{W}_2(P,Q_{n})}\right]\right\|^2} \mathrm{d}Q_{n} (\x).
\end{equation}
Let $E_n = \{Q_{n}\}$.
Note that when $P \in E_n$, a $0$ appears in the expression of $A_n$
(recall the convention $\boldsymbol{0}/0 = \boldsymbol{0}$).
For each $n$, we modify ${\bf P}={\bf P}_1+ {\bf P}_2$, where ${\bf P}_1$ is a measure on $\mathcal{P}_2(\R^d) \backslash E_n$ and ${\bf P}_2$ is a measure on $E_n$,
by ${\bf P}'={\bf P}_1+{\bf \widetilde{P}}_2$, 
where ${\bf \widetilde{P}}_2$ is an arbitrary measure on $\mathcal{P}_2(\R^d) \backslash E_n$ such that ${\bf \widetilde{P}}_2(\mathcal{P}_2(\R^d) ) = {\bf P}(E_n)$. Note that ${\bf P}'$ is also a probability measure. 

Since the measure ${\bf P}$ is tight
and $Q_n$ diverges, it is clear that  ${\bf P}(E_n) \to 0$ as $n \to \infty$. Moreover, 
\begin{align*}
	& \left|
	\left(
	\int{\left\|\E_{P\sim \mathbf{P}'}\left[  \frac{\x-T_{Q_{n} ,P}(\x) }{\mathcal{W}_2(P,Q_{n})}\right]\right\|^2} \mathrm{d}Q_{n} (\x)
	\right)^{1/2}
	-(A_n)^{1/2}
	\right| 
	\\ 
	& \le{\bf P}(E_n)
	\left(
	\int{\left\|
		\E_{P\sim \frac{{\bf \widetilde{P}}_2}{{\bf P}(E_n)}}
		\left[ 
		\frac{\x-T_{Q_{n} ,P}(\x) }{\mathcal{W}_2(P,Q_{n})}\right]\right\|^2} \mathrm{d}Q_{n} (\x)
	\right)^{1/2}.
\end{align*}
Since the spatial depth lies in $[0,1]$, we can upper bound this quantity by 
${\bf P}(E_n)$ and obtain that the limit of $\mathrm{SD}(Q_{n}; {\bf P})$ is that of $\mathrm{SD}(Q_{n}; {\bf P}')$. Therefore, we can feel free to assume that ${\mathcal{W}_2(P,Q_n)} =0$ does not happen for $n$ big enough and for $P \sim {\bf P}$. 

We prove that $A_n \to 1$, where $A_n$ is defined in \eqref{eq:An}.
To do so, let $P'$ be an independent copy of $P$, so that 
\begin{align} \label{eq:An:inner:product}
	A_n&=\int{\left\langle\E_{P\sim \mathbf{P}}\left[  \frac{\x-T_{Q_{n} ,P}(\x) }{\mathcal{W}_2(P,Q_{n})}\right], \E_{P'\sim \mathbf{P}}\left[  \frac{\x-T_{Q_{n} ,P'}(\x) }{\mathcal{W}_2(P',Q_{n})}\right] \right\rangle} \mathrm{d}Q_{n} (\x) \notag \\
	& =\int{\E_{P, P'\sim \mathbf{P}}\left[  \frac{ \left\langle\x-T_{Q_{n} ,P}(\x), \x-T_{Q_{n} ,P'}(\x)  \right\rangle}{\mathcal{W}_2(P,Q_{n}) \mathcal{W}_2(P',Q_{n})}\right] } \mathrm{d}Q_{n} (\x).
\end{align}
In order to reduce the size of the formulas we call 
$ B_{P,n}(\x)=\x-T_{Q_{n} ,P}(\x)$ and $ B_{P',n}(\x)=\x-T_{Q_{n} ,P'}(\x)$. Then 
$$  A_n=\int{\E_{P, P'\sim \mathbf{P}}\left[  \frac{ \left\langle B_{P,n}(\x), B_{P',n}(\x)  \right\rangle}{\|B_{P,n}\|_{L^2(Q_n)} \|B_{P',n}\|_{L^2(Q_n)}}\right] } \mathrm{d}Q_{n} (\x),$$
and, via Fubini's theorem, 
$$  A_n=\E_{P, P'\sim \mathbf{P}}\left[  \frac{ \left\langle B_{P,n}, B_{P',n} \right\rangle_{L^2(Q_n)}}{\|B_{P,n}\|_{L^2(Q_n)} \|B_{P',n}\|_{L^2(Q_n)}}\right].$$
Since 
\begin{equation*}
	\left|   C_n(P,P') \right|
	:=
	\left|
	\frac{ \left\langle B_{P,n}, B_{P',n} \right\rangle_{L^2(Q_n)}}{\|B_{P,n}\|_{L^2(Q_n)} \|B_{P',n}\|_{L^2(Q_n)}}
	\right|
	\leq \frac{ \| B_{P,n}\|_{L^2(Q_n)}\| B_{P',n} \|_{L^2(Q_n)}}{\|B_{P,n}\|_{L^2(Q_n)} \|B_{P',n}\|_{L^2(Q_n)}} = 1,
\end{equation*}
the dominated convergence theorem can be applied and we only need to show that 
\begin{equation}
	\label{eq:Cn}
	C_n(P,P') \longrightarrow   1,\quad \text{for } {\bf P }\otimes {\bf P}-{\rm a.e.}\  (P,P').
\end{equation}
We decompose $C_n(P,P')$ in two terms: $C_n(P,P')=C_{n,1}(P,P')+C_{n,2}(P,P')$ with
$$ C_{n,1}(P,P')= \frac{ \|B_{P,n}\|_{L^2(Q_n)}^2} {\|B_{P,n}\|_{L^2(Q_n)}^2}=1 , $$
and
$$ C_{n,2}(P,P')=  \left\langle \frac{  B_{P,n}}{\|B_{P,n}\|_{L^2(Q_n)}},  \frac{  B_{P',n}}{\|B_{P',n}\|_{L^2(Q_n)}} -\frac{  B_{P,n}}{\|B_{P,n}\|_{L^2(Q_n)}}  \right\rangle_{L^2(Q_n)}  . $$
The goal, of course, is to show that $C_{n,2}(P,P')\to 0$, for ${\bf P }\otimes {\bf P}$-a.e. $(P,P')$.  Since 
\begin{align*}
	&C_{n,2}(P,P')\\
	&= \left\langle \frac{  B_{P,n}}{\|B_{P,n}\|_{L^2(Q_n)}},  \frac{  B_{P',n}-B_{P,n}}{\|B_{P',n}\|_{L^2(Q_n)}} +  B_{P,n}\left(\frac{1}{\|B_{P',n}\|_{L^2(Q_n)}}-\frac{1}{\|B_{P,n}\|_{L^2(Q_n)}}  \right)\right\rangle_{L^2(Q_n)} \\
	&=  \left\langle \frac{  B_{P,n}}{\|B_{P,n}\|_{L^2(Q_n)}},  \frac{  T_{Q_n,P}-T_{Q_n, P'}}{\|B_{P',n}\|_{L^2(Q_n)}} +  B_{P,n}\left(\frac{1}{\|B_{P',n}\|_{L^2(Q_n)}}-\frac{1}{\|B_{P,n}\|_{L^2(Q_n)}}  \right)\right\rangle_{L^2(Q_n)} ,
\end{align*}
we can upper bound $|C_{n,2}(P,P')|$ by 
\begin{multline} \label{AN2}
	\frac{  \|B_{P,n}\|_{L^2(Q_n)}}{\|B_{P,n}\|_{L^2(Q_n)}}  \frac{  \|T_{Q_n,P}\|_{L^2(Q_n)}+\|T_{Q_n, P'}\|_{L^2(Q_n)}}{\|B_{P',n}\|_{L^2(Q_n)}}  \\
	+  \left\vert   \left\langle \frac{  B_{P,n}}{\|B_{P,n}\|_{L^2(Q_n)}},B_{P,n}\left(\frac{1}{\|B_{P,n}\|_{L^2(Q_n)}}-\frac{1}{\|B_{P',n}\|_{L^2(Q_n)}}  \right)\right\rangle_{L^2(Q_n)} \right\vert. 
\end{multline}
The first term of \eqref{AN2} tends to $0$ for ${\bf P }\otimes {\bf P}$-a.e. $(P,P')$. Indeed, using the equality $ \|T_{Q_n,P}\|_{L^2(Q_n)}^2 = \int \|\x\|^2 \mathrm{d}P(\x) $, the first term of \eqref{AN2} is equal to 
\begin{equation}
	\label{UpperFirstTerm}
	\frac{ \sqrt{\int \|\x\|^2 \mathrm{d}P(\x)}+\sqrt{\int \|\x\|^2 \mathrm{d}P'(\x)}}{\|B_{P',n}\|_{L^2(Q_n)}}.
\end{equation}
The latter clearly tends to $0$ since $\|B_{P',n}\|_{L^2(Q_n)}= \mathcal{W}_2(Q_n,P')$. 

To show that  the second term of \eqref{AN2} also tends to $0$ we use the bound 
\begin{multline*}
	\left\vert   \left\langle \frac{  B_{P,n}}{\|B_{P,n}\|_{L^2(Q_n)}},B_{P,n}\left(\frac{1}{\|B_{P,n}\|_{L^2(Q_n)}}-\frac{1}{\|B_{P',n}\|_{L^2(Q_n)}}  \right)\right\rangle_{L^2(Q_n)} \right\vert\\
	\leq \left\vert    \|B_{P,n}\| \left(\frac{\|B_{P,n}\|_{L^2(Q_n)}-\|B_{P',n}\|_{L^2(Q_n)} }{\|B_{P,n}\|_{L^2(Q_n)} \|B_{P',n}\|_{L^2(Q_n)}}  \right) \right\vert
\end{multline*}
followed by the  triangle inequality
\begin{align*}
	\left\vert    \|B_{P,n}\| \left(\frac{\|B_{P,n}\|_{L^2(Q_n)}-\|B_{P',n}\|_{L^2(Q_n)} }{\|B_{P,n}\|_{L^2(Q_n)} \|B_{P',n}\|_{L^2(Q_n)}}  \right) \right\vert &=   \left|\frac{\|B_{P,n}\|_{L^2(Q_n)}-\|B_{P',n}\|_{L^2(Q_n)} }{\|B_{P',n}\|_{L^2(Q_n)}}  \right|\\
	&\leq   \left(\frac{\|B_{P,n}-B_{P',n}\|_{L^2(Q_n)} }{\|B_{P',n}\|_{L^2(Q_n)}}  \right)\\
	&=   \left(\frac{\|T_{Q_n,P}-T_{Q_n,P'}\|_{L^2(Q_n)} }{\|B_{P',n}\|_{L^2(Q_n)}}  \right).
\end{align*}
The latter can be upper bounded by \eqref{UpperFirstTerm}, so that the second term of \eqref{AN2} also tends to $0$ for ${\bf P }\otimes {\bf P}$-a.e. $(P,P')$. This implies $C_{n,2}(P,P')$ tends to $0$ for ${\bf P }\otimes {\bf P}$-a.e. $(P,P')$. Hence, \eqref{eq:Cn} holds. 

\section{Proof of Lemma \ref{lemma:geo-inva}}
Since, from \eqref{eq:unique:geodesic},
we have
$T_{Q,\gamma^{Q\to P}_\lambda}(\x)= (1-\lambda)\x+\lambda T_{Q,P}(\x) $, we have
\begin{align*}
	\int{ \left\|\E_{P\sim \mathbf{P}_{\lambda,Q} }\left[  \frac{\x-T_{Q,P}(\x) }{\mathcal{W}_2(P,Q)}\right]\right\|^2} dQ(\x) &=\int{ \left\|\E_{P\sim \mathbf{P}}\left[  \frac{\x-T_{Q,\gamma^{Q\to P}_\lambda}(\x) }{\|I-T_{Q,\gamma^{Q\to P}_\lambda}\|_{L^2(Q)}}\right]\right\|^2} dQ(\x)  \\
	&=\int{ \left\|\E_{P\sim \mathbf{P}}\left[  \frac{ \x- T_{Q,P}(\x) }{\|I-T_{Q,P}\|_{L^2(Q)}}\right]\right\|^2} dQ(\x) \\
	&= \int{ \left\|\E_{P\sim \mathbf{P} }\left[  \frac{\x-T_{Q,P}(\x) }{\mathcal{W}_2(P,Q)}\right]\right\|^2} dQ(\x) ,
\end{align*}
which concludes the proof.

\section{Proof of Theorem~\ref{Theo:RelationSpatial}}
Consider $P \in \{ P_1 , \ldots , P_n \}$ and $Q \in \mathcal{P}_{2}^{a.c}(\R^d)$ such that $Q \not \in \{ P_1 , \ldots , P_n \}$.
We recall from \cite{villani2003topics} that 
\begin{equation}\label{WassertesinAppen}
	\mathcal{W}_2^2(P,Q)=\inf_{\pi\in \Pi(P,Q) } \frac{1}{2} \int \|\x-\y\|^2 \mathrm{d}\pi(\x,\y)
\end{equation}
admits a dual formulation 
\begin{equation}
	\label{dual:}
	\mathcal{W}_2^2(P,Q) =\sup_{(f,g)\in \Phi} \left\{ \int f(\x) \, \mathrm{d}Q(\x) + \int g(\y) \, \mathrm{d}P(\y) \right\},
\end{equation}
where $ \Phi=\{ (f,g)\in \mathcal{C}(\R^d)\times \mathcal{C}(\R^d): 
\  f(\x)+g(\y)\leq\frac{1}{2}\|\x-\y\|^2 \}$.
Here $\mathcal{C}(\R^d)$ is the set of continuous functions on $\R^d$. 
We denote as $(f_{Q,P},f_{P,Q})$ the solutions of \eqref{dual:}. It is well-known that $\nabla f_{Q,P}(\x)=\x-T_{Q,P}(\x)$.  
Now we argue by contradiction. 
We assume first that there exists 
$$ Q\in  \mathcal{P}_2^{a.c} (\R^d) \cap \argmin_{Q'}  \E_{P\sim \mathbf{P}}[\mathcal{W}_2(P,Q')] 
$$
with $Q \not \in \{ P_1 , \ldots , P_n \}$
and we assume that the set $\mathcal{K}'$ of all $\x$ such that 
$$
s(\x):=\E_{P\sim \mathbf{P}}\left[  \frac{\x-T_{Q,P}(\x) }{\mathcal{W}_2(P,Q)}\right]\neq 0 
$$
has positive measure $Q(\mathcal{K}')>0$. 
As $T_{Q,P}$ is the gradient of a lower semi continuous convex function, it is continuous $Q$-a.e., so that $s$ is also continuous $Q$-a.e. Therefore, there exists a compact convex set with non-empty interior $U$ such that   $U\subset \mathcal{K}'$.   
Consider the signed measure $h$ such that 
$\frac{d h}{dQ}=-{\bf 1}_U \left(f_{Q,P} -
\frac{1}{Q(U)}
\int_U f_{Q,P}(\z) \mathrm{d}Q(\z) \right) $, where ${\bf 1}_{U} $ is the indicator function of the set $U$.

Note that $h(\R^d)=0$ and $Q+t h$ is a probability measure with  finite second order moment for all $t$ in a neighborhood of zero. Since $(\cdot)^{1/2}$ is concave, 
$$  \mathcal{W}_2(P,Q+th)\leq  \mathcal{W}_2(P,Q) + \frac{\mathcal{W}_2^2(P,Q+th)-\mathcal{W}_2^2(P,Q)}{2\mathcal{W}_2(P,Q)}.  $$
Using the dual formulation \eqref{dual:} we obtain for $t$ in a neighborhood of zero,
$$  \frac{\mathcal{W}_2(P,Q+th)-\mathcal{W}_2(P,Q)}{t}\leq    \frac{\int f_{Q+th,P}(\x) \, \mathrm{d}h(\x)}{2\mathcal{W}_2(P,Q)}.  $$
Since $h(\R^d)=0$, we have for $t$ in a neighborhood of zero
\begin{multline*}
	\frac{\mathcal{W}_2(P,Q+th)-\mathcal{W}_2(P,Q)}{t}\\
	\leq    -\frac{ \int_U \left( f_{Q,P}(\x)-\frac{1}{Q(U)} \int_U f_{Q,P}(\z) \mathrm{d}Q(\z) \right) \left(f_{Q+th,P}(\x)-\frac{1}{Q(U)} \int_U f_{Q+th,P}(\z) \mathrm{d}Q(\z)\right) \mathrm{d}Q(\x) }{2\mathcal{W}_2(P,Q)}.   
\end{multline*}
Set 
$$ M(P):=\frac{1}{2}\frac{\int_U \left(f_{Q,P}(\x)-\frac{1}{Q(U)}\int_U f_{Q,P}(\z) \mathrm{d}Q(\z) \right)^2 \mathrm{d}Q(\x) }{\mathcal{W}_2(P,Q)}$$
and the norm 
$$ \|\phi\|_{U}:= \left(\int_U \left(\phi(\x)-\frac{1}{Q(U)}\int_U \phi(\z) \mathrm{d}Q(\z)\right)^2 \mathrm{d}Q(\x)\right)^{\frac{1}{2}}. $$
Then 
\[\frac{\mathcal{W}_2(P,Q+th)-\mathcal{W}_2(P,Q)}{t}\leq    - M(P)+\frac{ \| f_{Q,P}\|_U \| f_{Q,P}-f_{Q+th,P}\|_U }{2\mathcal{W}_2(P,Q)}.\]
Since $s(\x)\neq 0$ for $\x \in U$, the function $U\ni \x\mapsto \E_{P \sim {\bf P}}[f_{Q,P}(\x)]$ is non constant, which implies that  
$$  \E_{P\sim {\bf P}}[M(P)]:=\frac{1}{2}\E_{P\sim {\bf P}}\left[\frac{\int_U \left(f_{Q,P}(\x)-\frac{1}{Q(U)} \int_U f_{Q,P}(\z) \mathrm{d}Q(\z) \right)^2 \mathrm{d}Q(\x) }{\mathcal{W}_2(P,Q)}\right]>0.  $$
The theorem follows upon showing that 
\begin{equation}
	\E_{P\sim {\bf P}}\left[ \frac{ \| f_{Q,P}\|_U \| f_{Q,P}-f_{Q+th,P}\|_U }{\mathcal{W}_2(P,Q)} \right]\to 0\quad {\rm as} \ t\to 0,
\end{equation}
which is a trivial consequence of the main result of \cite{Segers2022GraphicalAU} and the assumption $Q \not \in \{ P_1 , \ldots , P_n \}$. 

\section{Proofs of Section \ref{subsection:robustness}}

\begin{proof}[Proof of Lemma~\ref{lemma:breakdown}]
	First we show the lower bound. Fix $\alpha\in (0,1]$ and assume that $\ell$ is such that
	\begin{equation}
		\label{Escaping:horizon}
		d_H \left(\mathcal{R}(\alpha; {\bf P}_n) ,\mathcal{R}(\alpha;{\bf Q}_{m})\right) \to \infty
	\end{equation}
	for some sequence $\{{\bf Q}_{m}\}_m\subset \mathcal{P}({\ell,n}) $. Without loosing generality we assume that the perturbed points are $P_{n-\ell+1}, \dots, P_n$. Since the  WSD vanishes at infinity  (cf.~Theorem~\ref{Theo:Main}), it follows that $\mathcal{R}(\alpha; {\bf P}_n)$ is bounded. Hence,  \eqref{Escaping:horizon} is equivalent to the existence of a  sequence $Q_m\in \mathcal{R}(\alpha;{\bf Q}_{m})$ containing a divergence subsequence, i.e., $\mathcal{W}_2(Q_{m},Q)\to \infty $ for some $Q\in \mathcal{P}_2(\R^d)$. (Note here that $Q_{m}$ diverges if and only if   $\mathcal{W}_2(Q_{m},Q)\to \infty $ for all $Q\in \mathcal{P}_2(\R^d)$.) We use the same notation for the subsequence.
	Let $\widetilde{P}_1,\ldots,\widetilde{P}_n$ be the support points of ${\bf Q}_m$ with $\widetilde{P}_1=P_1,\ldots,\widetilde{P}_{n-\ell} = P_{n - \ell}$.
	By triangle inequality, we get 
	\begin{align*}
		1-\alpha & \ge \left\|\frac{1}{n}\sum_{i=1}^n \left[  \frac{I-T_{Q_m,\widetilde{P}_i} }{\mathcal{W}_2(Q_m, \widetilde{P}_i)}\right]\right\|_{L^2(Q_m)}\\
		&\geq \left\|\frac{1}{n}\sum_{i=1}^{n-\ell} \left[  \frac{I-T_{Q_m,P_i} }{\mathcal{W}_2(Q_m, P_i)}\right]\right\|_{L^2(Q_m)}- \left\|\frac{1}{n}\sum_{i = n-\ell +1}^n \left[  \frac{I-T_{Q_m,\widetilde{P}_i} }{\mathcal{W}_2(Q_m, \widetilde{P}_i)}\right]\right\|_{L^2(Q_m)}.
	\end{align*}
	Since $Q_m$ diverges, Theorem~\ref{Theo:Main} (vanishing at infinity and values in $[0,1]$ properties) implies 
	$$ \lim_{m \to \infty} \left\|\frac{1}{n}\sum_{i=1}^{n-\ell} \left[  \frac{I-T_{Q_m,P_i} }{\mathcal{W}_2(Q_m, P_i)}\right]\right\|_{L^2(Q_m)}= \frac{n-\ell}{n} $$
	and 
	$$ \left\|\frac{1}{n}\sum_{i = n-\ell +1}^n \left[  \frac{I-T_{Q_m,\widetilde{P}_i} }{\mathcal{W}_2(Q_m, \widetilde{P}_i)}\right]\right\|_{L^2(Q_m)} \leq \frac{\ell}{n}.$$
	Hence, we get 
	$$ 1-\alpha \geq  \frac{n-\ell-\ell}{n}=1-\frac{2\ell}{n}, $$
	which yields the lower bound.

	Consider now the lower bound and fix $\alpha\in (0,1 - \frac{2}{n} ]$.
	We define the group $\{S_{m}\}_{m\in \R}$ of Euclidean isometries, with $S_{m}(\x)=\x+m \u$, for some $\u\in \R^d$ with $\|\u\|=1$. Note that for each $m\in \mathbb{R}$, it induces the isometry $\mathbb{S}_{m}(P)=(S_{m})_\# P $ over $\mathcal{P}_2(\R^d)$. Next, fix $\ell\in \{1, \dots, n\}$, with $\ell \le n/2$.
	Fix also $Q\in \Ptac(\R^d)$. Now we construct the sequence of adversarial samples. For each $m\in \N$, we exchange  $P_{n-\ell+1}, \dots, P_n$ by $P^{m}_{1}, \dots, P^{m}_\ell$, where $P^m_j= \mathbb{S}_{m}(P_{n-\ell+j})$ for $j=1, \dots, \ell$. We call $\mathbf{P}^{m}_n$ the empirical measure of the data $$ P_1, \dots, P_{n-\ell},  P^{m}_{1}, \dots, P^{m}_\ell.$$
	We will increase $ m $ and show that $ \mathbb{S}_{\frac{m}{2}}(Q) \in \mathcal{R}(\alpha;{\bf P}_n^m)$ for $m$ large enough and $\alpha <   \frac{ 2\ell}{n} $. Since $\{\mathbb{S}_{\frac{m}{2}}(Q)\}_{m\in \N}$ diverges, this will finish the proof. The fact that WSD is transformation invariant (Theorem \ref{Theo:Main}) implies that, for every $Q\in \mathcal{P}_2^{a.c}(\R^d)$ and $m\in \N$, 
	$$  \rm WSD(\mathbb{S}_{\frac{m}{2}}(Q); {\bf P}_n^m) =   \rm WSD(Q; (\mathbb{S}_{-\frac{m}{2}})_\# {\bf P}_n^m),  $$
	where, by definition,  $(\mathbb{S}_{-\frac{m}{2}})_\# {\bf P}_n^m$ is the empirical measure of 
	$$ (\mathbb{S}_{-\frac{m}{2}}(P_1), \dots, \mathbb{S}_{-\frac{m}{2}}(P_{n-\ell}),  \mathbb{S}_{\frac{m}{2}}(P_{n - \ell +1}), \dots, \mathbb{S}_{\frac{m}{2}}(P_n)).$$
	Since 
	$ T_{Q, \mathbb{S}_{\frac{m}{2}}(P) } (\x)  =  T_{Q,P } (\x)+ \frac{m}{2}\u $ 
	and 
	$ T_{Q, \mathbb{S}_{-\frac{m}{2}}(P) } (\x)  =  T_{Q,P } (\x)-\frac{m}{2}\u, $
	we get  
	\begin{align*}
		1- & \rm WSD(\mathbb{S}_{\frac{m}{2}}(Q); {\bf P}_n^m)&\\
		& =  \frac{1}{n}\left\|\sum_{i=1}^{n-\ell}  \frac{I+\frac{m}{2} \u -T_{Q, P_i }  }{\| I+\frac{m}{2} \u -T_{Q, P_i } \|_{L^2(Q)} }+ \sum_{i=1}^\ell  \frac{I-\frac{m}{2} \u -T_{Q, P_i } }{\| I-\frac{m}{2} \u -T_{Q, P_i } \|_{L^2(Q)}}\right\|_{L^2(Q)}\\
		& \leq \frac{1}{n}\left\|\sum_{i=1}^\ell  \frac{I+\frac{m}{2} \u -T_{Q, P_i }  }{\| I+\frac{m}{2} \u -T_{Q, P_i } \|_{L^2(Q)} }+ \frac{I-\frac{m}{2} \u -T_{Q, P_i } }{\| I-\frac{m}{2} \u -T_{Q, P_i } \|_{L^2(Q)}}\right\|_{L^2(Q)}+ \frac{n-2\ell}{n}.
	\end{align*}
	Note that, for every $i$, 
	\begin{multline*}
		\left\|  \frac{I+\frac{m}{2} \u -T_{Q, P_i }  }{\| I+\frac{m}{2} \u -T_{Q, P_i } \|_{L^2(Q)} }+ \frac{I-\frac{m}{2} \u -T_{Q, P_i } }{\| I-\frac{m}{2} \u -T_{Q, P_i } \|_{L^2(Q)}}\right\|_{L^2(Q)} \\
		= \left\|  \frac{\frac{2}{m}I+ \u -\frac{2}{m}T_{Q, P_i }  }{\| \frac{2}{m}I+ \u -\frac{2}{m}T_{Q, P_i } \|_{L^2(Q)} }+ \frac{\frac{2}{m}I-\u -\frac{2}{m}T_{Q, P_i } }{\| \frac{2}{m}I- \u -\frac{2}{m}T_{Q, P_i } \|_{L^2(Q)}}\right\|_{L^2(Q)} \to 0,
	\end{multline*}
	so that, for every $\epsilon >0$,
	$ \rm WSD(\mathbb{S}_{\frac{m}{2}}(Q); {\bf P}_n^m) \geq  \frac{ 2\ell}{n}  - \epsilon $ for $m$ large enough and the result follows.
\end{proof}

\begin{proof}[Proof of Lemma~\ref{lemma:influence}]
	First note that 
	\begin{multline*}
		\E_{P\sim \mathbf{P}+t (\delta_\mu -{\bf P})}\left[  \frac{\x-T_{Q,P}(\x) }{\mathcal{W}_2(P,Q)}\right]=\underbrace{\E_{P\sim \mathbf{P}}\left[  \frac{\x-T_{Q,P}(\x) }{\mathcal{W}_2(P,Q)}\right]}_{=:A} \\
		+ t \underbrace{\left(  \frac{\x-T_{Q,\mu}(\x) }{\mathcal{W}_2(\mu,Q)}- \E_{P\sim \mathbf{P}}\left[  \frac{\x-T_{Q,P}(\x) }{\mathcal{W}_2(P,Q)}\right] \right)}_{=:H}.
	\end{multline*}
	Since the norm in a Hilbert space admits directional derivatives (except at zero), if  ${\rm WSD}(Q;{\bf P})\neq 1$ (or equivalently, if $A\neq 0$), we get 
	$${\rm IC}( \mu, {\rm WSD}(Q;{\bf P} ))= - \frac{d}{d t } \bigg\vert_{t=0}\|A+ t H\|_{L^2(Q)} =-\frac{\langle A,H\rangle_{L^2(Q)}}{\|A\|_{L^2(Q)}} $$
	and, if ${\rm WSD}(Q;{\bf P})= 1$ (or equivalently, if $A=0$), 
	$${\rm IC}( \mu, {\rm WSD}(Q;{\bf P} ))= - \frac{d}{d t } \bigg\vert_{t=0} t\| H\|_{L^2(Q)} = -\| H\|_{L^2(Q)} .$$
	Hence, the result follows.
\end{proof}

\section{Proofs of Section \ref{Sec:Continuity}}
\begin{proof}[Proof of Theorem~\ref{Theo:COnti}] 
	As $\mathbf{P}\in \mathcal{P}(\mathcal{P}_{2}(\R^d)) $ is atomless there exists an open  Wasserstein ball
	$$ \mathbb{B}_{\mathcal{W}_2}(Q,\beta)=\{P\in \mathcal{P}_{2}(\R^d): \ \mathcal{W}_2(P,Q)< \beta\}$$
	with ${\bf P}(\mathbb{B}_{\mathcal{W}_2}(Q,\beta))\leq \epsilon/2$. Since ${\bf P} \in  \mathcal{P}(\mathcal{P}_{2}(\R^d)) $ is tight, there exists a compact set $K\subset \mathcal{P}_{2}(\R^d)$ such that 
	${\bf P}(\mathcal{P}_{2}(\R^d)\setminus K) \leq \epsilon/2 $.
	Set 
	$\mathcal{V}_\beta= K\cap (\mathcal{P}_{2}(\R^d)\setminus \mathbb{B}_{\mathcal{W}_2}(Q,\beta))$ and $\mathcal{V}_\beta^c= \mathcal{P}_{2}(\R^d)\setminus \mathcal{V}_\beta$. In summary, it holds that 
	\begin{equation}
		\label{epsilonResidual}
		{\bf P}(\mathcal{V}_\beta^c) \leq \epsilon.
	\end{equation}
	Moreover, as $\mathcal{W}_2(Q_n,Q) \to 0 $, we can assume that $n$ is large enough such that $\mathcal{W}_2(Q_n,Q)\leq \beta/2$, which implies that
	\begin{equation}
		\label{eq:farfromBeta}
		\mathcal{W}_2(Q_n,P)\geq \mathcal{W}_2(P,Q) -\mathcal{W}_2(Q_n,Q) \geq  \beta/2,
	\end{equation}
	for all $P\in \mathcal{V}_\beta$.  Next, call 
	$$ A_n^2=\int{ \left\|\E_{P\sim \mathbf{P}}\left[  \frac{\x-T_{Q_n,P}(\x) }{\mathcal{W}_2(P,Q_n)}\right]\right\|^2} \mathrm{d}Q_n(\x) $$
	and 
	$$ A^2=\int{ \left\|\E_{P\sim \mathbf{P}}\left[  \frac{\x-T_{Q,P}(\x) }{\mathcal{W}_2(P,Q)}\right]\right\|^2} \mathrm{d}Q(\x). $$
	The result follows by showing that $ A_n^2\to A^2$. Triangle inequality implies that 
	\begin{multline*}
		\left\vert A_n-\left(\underbrace{\int{ \left\|\E_{P\sim \mathbf{P}}\left[ {\bf 1}_{\mathcal{V}_\beta}(P) \frac{\x-T_{Q_n,P}(\x) }{\mathcal{W}_2(P,Q_n)}\right]\right\|^2} \mathrm{d}Q_n(\x)}_{=:B_n^2} \right)^{\frac{1}{2}} \right|\\
		\leq \left(\int{ \left\|\E_{P\sim \mathbf{P}}\left[ {\bf 1}_{\mathcal{V}_\beta^c}(P) \frac{\x-T_{Q_n,P}(\x) }{\mathcal{W}_2(P,Q_n)}\right]\right\|^2} \mathrm{d}Q_n (\x) \right)^{\frac{1}{2}},
	\end{multline*}
	so that, arguing as in Section~\ref{subsection:liesIn01} and using \eqref{epsilonResidual}, we derive the bound 
	$ |A_n-B_n|\leq \epsilon $ for all $n\in \N$. By the same means 
	$ |A-B|\leq \epsilon $ where 
	$$ B^2=\int{ \left\|\E_{P\sim \mathbf{P}}\left[ {\bf 1}_{\mathcal{V}_\beta}(P) \frac{\x-T_{Q,P}(\x) }{\mathcal{W}_2(P,Q)}\right]\right\|^2} \mathrm{d}Q(\x).$$
	Therefore, since   $ \epsilon$ is arbitrary,  the result follows after showing that $B_n\to B$. To do so, we set $\X_n\sim Q_n$ for $n\in \N$, $\X \sim Q$ 
	and $P, P'\in \mathcal{P}_2(\R^d)$.
	Arguing as in the proof of Theorem~2.1 in \cite{BodhisattvaDeb} we get for every $P,P' \in \mathcal{P}_2(\R^d)$, 
	\begin{equation} \label{eq:from:thm:coupling}
		(\X_n,T_{Q_n,P}(\X_n),T_{Q_n,P'}(\X_n))
		\xrightarrow{w} 
		(\X,T_{Q,P}(\X),T_{Q,P'}(\X)).
	\end{equation}
	Indeed a straightforward adaptation of the arguments there shows first that there is a limit in distribution which is the distribution of the random vector
	\[
	( \Z_1,\Z_2,\Z_3),
	\]
	where of course we have $\Z_1 \sim Q$. Then the arguments there show that $(\X_n,T_{Q_n,P}(\X_n)) \xrightarrow{w}  (\X,T_{Q,P}(\X))$ and thus a.s.
	\[
	\Z_2 = T_{Q,P}(\Z_1).
	\]
	Similarly,
	\[
	\Z_3 = T_{Q,P'}(\Z_1)
	\]
	and thus \eqref{eq:from:thm:coupling} holds. The continuous mapping theorem with the function 
	$ (\x, \y,\z)\mapsto (\y-\x, \z-\x)$ implies that
	$$ 
	\begin{pmatrix}
		T_{Q_n,P}(\X_n) -\X_n \\
		T_{Q_n,P'}(\X_n) -\X_n
	\end{pmatrix}
	\xrightarrow{w} 
	\begin{pmatrix}
		T_{Q,P}(\X) -\X  \\
		T_{Q,P'}(\X) -\X 
	\end{pmatrix}.
	$$
	Since for all $P\in \mathcal{P}_2(\R^d)$, it holds that $\mathcal{W}_2(Q_n,P)\to \mathcal{W}_2(Q,P) $, Slutsky's theorem
	yields 
	\begin{equation}
		\label{weakLimitQn}
		\begin{pmatrix}
			\frac{T_{Q_n,P}(\X_n) -\X_n}{\mathcal{W}_2(Q_n,P)} 
			\\
			\frac{T_{Q_n,P'}(\X_n) -\X_n}{\mathcal{W}_2(Q_n,P')} 
		\end{pmatrix}
		\xrightarrow{w}
		\begin{pmatrix}
			\frac{T_{Q,P}(\X) -\X}{\mathcal{W}_2(Q,P)} \\
			\frac{T_{Q,P'}(\X) -\X}{\mathcal{W}_2(Q,P')}
		\end{pmatrix}
	\end{equation}
	for  all $ P , P' $ such that $\mathcal{W}_2(Q,P)>0$ and $\mathcal{W}_2(Q,P')>0$. As a consequence, \eqref{weakLimitQn} holds for ${\bf P}$-a.e. $P,P'$.  
	
	Let ${\bf P}_\beta$ be the probability measure  $A\mapsto {\bf P}_\beta(A)= \frac{{\bf P}(\mathcal{V}_\beta\cap A)}{ {\bf P}(\mathcal{V}_\beta)}$.
	Therefore, for $(P,P')\sim {\bf P}_\beta\otimes {\bf P}_\beta$ with $({P},P')$ independent of $ \{\X_n\}_{n\in \N} $, we obtain 
	$$
	{\Y}_n:= \frac{ \left\langle\X_n-T_{Q_{n} ,P}(\X_n), \X_n-T_{Q_{n} ,P'}(\X_n)  \right\rangle}{\mathcal{W}_2(P,Q_{n}) \mathcal{W}_2(P',Q_{n})}\xrightarrow{w} {\Y}:=\frac{ \left\langle \X-T_{Q ,P}(\X), \X-T_{Q ,P'}(\X)  \right\rangle}{\mathcal{W}_2(P,Q) \mathcal{W}_2(P',Q)}.
	$$
	Indeed, for a bounded continous function $F: \R \to \R$,
	\begin{align*}
		\E \left[ 
		F({\Y}_n)
		\right]
		&= 
		\E \left[ 
		\E \left[ 
		\left.
		F({\Y}_n)
		\right| 
		P,P'
		\right]
		\right]
		\\
		&=  \int 
		\int 
		\E \left[ 
		\left.
		F({\Y}_n)
		\right| 
		P=\tilde{P},P'=\tilde{P}'
		\right]
		\mathrm{d} {\bf P}_\beta(\tilde{P})
		\mathrm{d} {\bf P}_\beta(\tilde{P}')
		\\ 
		&= 
		\int 
		\int 
		\E \left[ 
		F
		\left(  
		\frac{ \left\langle\X_n-T_{Q_{n} ,\tilde{P}}(\X_n), \X_n-T_{Q_{n} ,\tilde{P}'}(\X_n)  \right\rangle}{\mathcal{W}_2(\tilde{P},Q_{n}) \mathcal{W}_2(\tilde{P}',Q_{n})}
		\right)
		\right]
		\mathrm{d} {\bf P}_\beta(\tilde{P})
		\mathrm{d} {\bf P}_\beta(\tilde{P}')
		\\
		&\underset{n \to \infty}{\longrightarrow} 
		\int 
		\int 
		\E \left[ 
		F
		\left(  
		\frac{ \left\langle\X-T_{Q ,\tilde{P}}(\X), \X-T_{Q ,\tilde{P}'}(\X)  \right\rangle}{\mathcal{W}_2(\tilde{P},Q) \mathcal{W}_2(\tilde{P}',Q)}
		\right)
		\right]
		\mathrm{d} {\bf P}_\beta(\tilde{P})
		\mathrm{d} {\bf P}_\beta(\tilde{P}')
		\\
		&= 
		\E[F(\Y)],
	\end{align*}
	where the above limit holds due to dominated convergence.
	
	Skorokhod's representation theorem yields the existence of a sequence of random variables $\{\tilde{\Y}_n\}$ defined on a common probability space $(\Omega', \mathcal{A}',\mathbb{P}')$ taking values in $\R$  with $\tilde{\Y}_n\overset{d}{=}{\Y}_n$ converging $\mathbb{P}'$-a.e. to a random variable  $\tilde{\Y}:\Omega'\to \R^d$ with  $\tilde{\Y}\overset{d}{=} \Y$. 
	Since
	\begin{align*}
		B_n^2&={\bf P}(\mathcal{V}_\beta)^2 {\E\left[  \frac{ \left\langle\X_n-T_{Q_{n} ,P}(\X_n), \X_n-T_{Q_{n} ,P'}(\X_n)  \right\rangle}{\mathcal{W}_2(P,Q_{n}) \mathcal{W}_2(P',Q_{n})}\right] }= {\bf P}(\mathcal{V}_\beta)^2 \E[\tilde{\Y}_n]
	\end{align*}
	and 
	\begin{align*}
		B^2&={\bf P}(\mathcal{V}_\beta)^2 {\E\left[  \frac{ \left\langle \X-T_{Q ,P}(\X), \X-T_{Q ,P'}(\X)  \right\rangle}{\mathcal{W}_2(P,Q) \mathcal{W}_2(P',Q)}\right] }={\bf P}(\mathcal{V}_\beta)^2 \E[\tilde{\Y}],
	\end{align*}
	we only need to prove that $\Y_n$ is uniformly integrable. The bound \eqref{eq:farfromBeta} implies that it is enough to show that each of the terms of the right hand side of 
	\begin{multline}\label{eq:decompositionUnfiormInt}
		|\left\langle\X_n-T_{Q_{n} ,P}(\X_n), \X_n-T_{Q_{n} ,P'}(\X_n)  \right\rangle|\\
		\leq \|\X_n\|^2+ \|T_{Q_{n} ,P}(\X_n)\|\|\X_n\|+\|T_{Q_{n} ,P'}(\X_n)\|\|\X_n\|+\|T_{Q_{n} ,P}(\X_n)\|\|T_{Q_{n} ,P'}(\X_n)\|
	\end{multline}
	are uniformly integrable. Recall that a set $S$ of random variables is uniformly integrable if 
	$$ \lim_{R\to +\infty}\sup_{U\in S}\E[|U|{\bf 1}_{|U|>R}] =0 . $$
	Since $\mathcal{V}_\beta$ and $\{Q_n\}_{n\in \N}$  are relatively compact subsets in the 2-Wasserstein topology, 
	Theorem~7.12 in \cite{villani2003topics} implies that 
	\begin{equation}
		\label{TightWasserst}
		\lim_{R\to +\infty}\sup_{P\in \mathcal{V}_\beta}\int_{{ \| \x\|^2>R}} \|\x\|^2 {\rm d} P(\x) =0
	\end{equation}
	and 
	\begin{equation}
		\label{TightWasserstQn}
		\lim_{R\to +\infty}\sup_{n\in \N}\int_{{ \| \x\|^2>R}} \|\x\|^2 {\rm d} Q_n(\x) =0.
	\end{equation}
	The last limit \eqref{TightWasserstQn} implies that the sequence $\{\|\X_n\|^2\}_{n\in \N}$ is uniformly integrable, so that the first term of  the right-hand-side of \eqref{eq:decompositionUnfiormInt} is uniformly integrable. For the second, we observe that 
	\begin{align*}
		\E[&\|T_{Q_{n} ,P}(\X_n)\|\|\X_n\|{\bf 1}_{\|T_{Q_{n} ,P}(\X_n)\|\|\X_n\|>R}] \\
		&\leq \E\left[\|T_{Q_{n} ,P}(\X_n)\|\|\X_n\|{\bf 1}_{\|\X_n\|>R^{\frac{1}{2}}}\right]+\E\left[\|T_{Q_{n} ,P}(\X_n)\|\|\X_n\|{\bf 1}_{\|T_{Q_{n} ,P}(\X_n)\|>R^{\frac{1}{2}}}\right]\\
		&\leq \left(\E\left[\|T_{Q_{n} ,P}(\X_n)\|^2\right]\E\left[\|\X_n\|^2{\bf 1}_{\|\X_n\|>R^{\frac{1}{2}}}\right]\right)^{\frac{1}{2}}\\
		&\qquad \qquad+\left(\E\left[\|T_{Q_{n} ,P}(\X_n)\|^2 {\bf 1}_{\|T_{Q_{n} ,P}(\X_n)\|>R^{\frac{1}{2}}}\right] \E\left[\|\X_n\|^2\right]\right)^{\frac{1}{2}}\\
		&\leq  \left(\sup_{P\in \mathcal{V}_\beta}\int \|\x\|^2 {\rm d} P(\x)\int_{\|\x\|^2> R} \|\x\|^2 {\rm d} Q_n(\x) \right)^{\frac{1}{2}}\\
		&\qquad \qquad+\left(\sup_{P\in \mathcal{V}_\beta}\int_{\|\x\|^2> R} \|\x\|^2 {\rm d} P(\x) \int \|\x\|^2 {\rm d} Q_n(\x)\right)^{\frac{1}{2}},
	\end{align*}
	where we used the fact that $T_{Q_{n} ,P}(\X_n)\sim P$ for all $n\in \N$.  Since, 
	$ \sup_{n\in \N}\int \|\x\|^2 {\rm d} Q_n(\x)  $ and $\sup_{P\in \mathcal{V}_\beta}\int \|\x\|^2 {\rm d} P(\x)$ are bounded, the previous display,  \eqref{TightWasserst} and \eqref{TightWasserstQn} imply that the second  term of \eqref{eq:decompositionUnfiormInt} is uniformly integrable. Since $P$ and $P'$ are exchangeable, the same holds for the third term. The uniform integrability of the last one follows directly from \eqref{TightWasserst}. 
\end{proof}

\begin{proof}[Proof of Lemma~\ref{Stabilty}]
	
	From \cite[Corollary~5.23]{villani2008optimal},
	for every $\epsilon>0$, it holds that 
	$Q(\| T_{Q,P_n} - T_{Q,P}\| \geq \epsilon) \to 0$. As $\| T_{Q,P_n} - T_{Q,P}\|_{L^2(Q)}$ is uniformly bounded, the sequence   $\{ T_{Q,P_n} - T_{Q,P}\}_{n\in \N}$ is compact w.r.t.\ the weak topology of $L^2(Q)$ by the Banach-Alaoglu–Bourbaki theorem (cf.\ \cite[Theorem~3.16]{Brezis}). Therefore, for each subsequence $\{ T_{Q,P_{n_k}} - T_{Q,P}\}_{k\in \N}$  there exists a further subsequence $\{ T_{Q,P_{n_{k_\ell}}} - T_{Q,P}\}_{\ell\in \N}$ such that 
	$$ \langle T_{Q,P_{n_{k_\ell}}} - T_{Q,P},h \rangle_{L^2(Q)}\to  \langle  L,h \rangle_{L^2(Q)}$$
	for some $L\in L^2(Q)$ and all $h\in L^2(Q)$. We prove now that $L=0$, irrespective of the subsequences. To improve readability, we write $\{ T_{Q,P_n} - T_{Q,P}\}_{n\in \N}$ instead of $\{ T_{Q,P_{n_{k_\ell}}} - T_{Q,P}\}_{\ell\in \N}$.   
	Since $Q(\| T_{Q,P_n} - T_{Q,P} \| \geq \epsilon) \to 0 $ and $$\|T_{Q,P_n}\|_{L^2(Q)}^2=\int \|\x\|^2 {\rm d} P_n(\x) \to  \int \|\x\|^2 {\rm d} P(\x)=\|T_{Q,P}\|^2_{L^2(Q)}<+\infty,$$ 
	Vitali convergence theorem implies that $\{ T_{Q,P_{n}} - T_{Q,P}\}_{n\in \N}$ converges to zero in the reflexive space  $L^{\frac{3}{2}}(Q)$.  Therefore,
	$0$ is also the weak limit of $ T_{Q,P_n} - T_{Q,P}$ in $L^{\frac{3}{2}}(Q)$, i.e., 
	$$ \int \langle T_{Q,P_{n}} - T_{Q,P},h \rangle dQ \to  0 $$
	for all $h\in L^{3}(Q)$. As a consequence, $L=0$,  $Q$-a.e. Moreover, 
	\begin{align*}
		\| T_{Q,P_{n}} - T_{Q,P}\|_{L^{2}(Q)}^2
		& =  \|T_{Q,P_{n}}\|_{L^{2}(Q)}^2 + \|T_{Q,P}\|_{L^{2}(Q)}^2-2 \langle T_{Q,P_{n}} , T_{Q,P}\rangle_{L^{2}(Q)}
		\\ 
		& \to  2\|T_{Q,P}\|^2-2 \langle T_{Q,P} , T_{Q,P}\rangle_{L^{2}(Q)}
		\\
		& = 0. 
	\end{align*}
	This concludes the proof.
\end{proof}

\begin{proof}[Proof of Theorem~\ref{Theorem:contP}]
	Fix $\epsilon>0$. As $\mathbf{P}\in \mathcal{P}(\mathcal{P}_{2}(\R^d)) $ is atomless there exists an open  Wasserstein ball
	$$ \mathbb{B}_{\mathcal{W}_2}(Q,\beta)=\{P\in \mathcal{P}_{2}(\R^d): \ \mathcal{W}_2(P,Q)< \beta\}$$
	with ${\bf P}(\mathbb{B}_{\mathcal{W}_2}(Q,\beta))\leq \epsilon/8$. Since ${\bf P}_n \xrightarrow{w} {\bf P} $ in $\mathcal{P}(\mathcal{P}_{2}(\R^d))$ and the closure of $\mathbb{B}_{\mathcal{W}_2}(Q,\beta/2)$ under the $\mathcal{W}_{2} $-metric,
	is contained in 
	$  \mathbb{B}_{\mathcal{W}_2}(Q,\beta) $,  there exists $n_0\in \N$  such that 
	$$  {\bf P}_n(\mathbb{B}_{\mathcal{W}_2}(Q,\beta/2)) \leq \epsilon/4 \quad \text{for all } n\geq n_0.$$
	As $\{{\bf P}_n\}_{n\in \N}\subset  \mathcal{P}(\mathcal{P}_{2}(\R^d)) $ is tight, there exists a compact set $K\subset \mathcal{P}_{2}(\R^d)$ such that 
	$$ {\bf P}_n(\mathcal{P}_{2}(\R^d)\setminus K) \leq \epsilon/4 \quad \text{for all } n\geq n_0. $$
	Call 
	$V= K\cap (\mathcal{P}_{2}(\R^d)\setminus \mathbb{B}_{\mathcal{W}_2}(Q,\beta/2))$ and $V^c= \mathcal{P}_{2}(\R^d)\setminus V$. Then 
	$$ {\bf P}( V^c)+ {\bf P}_n( V^c) \leq \epsilon \quad \text{for all } n\geq n_0. $$
	We call 
	$$ A:= \left| \left\| \int   \frac{I-T_{Q,P} }{\mathcal{W}_2(P,Q)} \mathrm{d}{\bf P}_n(P)\right\|_{L^2(Q)} - \left\| \int   \frac{I-T_{Q,P} }{\mathcal{W}_2(P,Q)} \mathrm{d}{\bf P}(P)\right\|_{L^2(Q)}  \right|.$$
	The triangle inequality yields 
	\begin{align*}
		A&\leq \left\| \int   \frac{I-T_{Q,P} }{\mathcal{W}_2(P,Q)} \mathrm{d}({\bf P}_n-{\bf P})(P)\right\|_{L^2(Q)}\\
		&\leq \left\| \int_{V}   \frac{I-T_{Q,P} }{\mathcal{W}_2(P,Q)} \mathrm{d}({\bf P}_n-{\bf P})(P)\right\|_{L^2(Q)}+ \left\| \int_{V^c}   \frac{I-T_{Q,P} }{\mathcal{W}_2(P,Q)} \mathrm{d}{\bf P} (P)\right\|_{L^2(Q)}\\
		&\qquad \qquad +\left\| \int_{V^c}   \frac{I-T_{Q,P} }{\mathcal{W}_2(P,Q)} \mathrm{d}{\bf P}_n (P)\right\|_{L^2(Q)}.
	\end{align*}
	Arguing as in Section~\ref{subsection:liesIn01} we get that, for $n \ge n_0$, 
	$$ \left\| \int_{V^c}   \frac{I-T_{Q,P} }{\mathcal{W}_2(P,Q)} \mathrm{d}{\bf P} (P)\right\|_{L^2(Q)} +\left\| \int_{V^c}   \frac{I-T_{Q,P} }{\mathcal{W}_2(P,Q)} \mathrm{d}{\bf P}_n (P)\right\|_{L^2(Q)}\leq {\bf P}( V^c)+ {\bf P}_n( V^c) \leq \epsilon. $$
	Moreover, as the function 
	$$ V \ni  P\mapsto  \frac{I-T_{Q,P} }{\mathcal{W}_2(P,Q)} \in  L^2(Q) $$ 
	is continuous and bounded (Lemma~\ref{Stabilty}),  for every $h\in L^2(Q)$ it holds that 
	$$ \int_{V}  \left\langle  \frac{I-T_{Q,P} }{\mathcal{W}_2(P,Q)} ,  h \right\rangle_{L^2(Q)}  \mathrm{d}({\bf P}_n-{\bf P})(P) \to 0 ,$$
	meaning that 
	$ \int_{V}    \frac{I-T_{Q,P} }{\mathcal{W}_2(P,Q)}  \mathrm{d}({\bf P}_n-{\bf P})(P)  $ converges to zero in the weak topology of $L^2(Q)$. However, as the set
	$$ \left\{ \frac{I-T_{Q,P} }{\mathcal{W}_2(P,Q)} : P\in V\right\} \cup \{ 0\} $$
	is compact (note that $V$ is compact in $\mathcal{P}_{2}(\R^d)$ and  $\mathcal{P}_{2}(\R^d)
	\backslash 
	\{ Q \}
	\ni  P\mapsto  \frac{I-T_{Q,P} }{\mathcal{W}_2(P,Q)}$ is continuous, see Lemma~\ref{Stabilty}), its closed convex hull, namely $C$, is compact as well.   Since $ \int_V  \frac{I-T_{Q,P} }{\mathcal{W}_2(P,Q)}  \mathrm{d} {\bf P}_n $ lies in $C$ for all $n\in \N$, the convergence of  $ \int_{V}    \frac{I-T_{Q,P} }{\mathcal{W}_2(P,Q)}  \mathrm{d}({\bf P}_n-{\bf P})(P)  $ towards zero holds in the strong topology of  $L^2(Q)$.  We have proven that 
	$A\leq 2 \epsilon$ for $n$ big enough. Since $\epsilon$ was arbitrarily chosen, the result follows. 
\end{proof}

\section{Proofs for Section \ref{consistency}}

\begin{proof}[Proof of Lemma \ref{lem:asympt:var:CLT}]
	First, by \eqref{eq:LLN:two} and the continuous mapping theorem,
	\[
	\frac{1}{
		\left(
		\rm WSD(Q; {\bf P}_n ) - 1
		\right)^2 
	}
	\xrightarrow{a.s.}
	\frac{1}{
		\left(
		\rm WSD(Q; {\bf P} ) - 1
		\right)^2},
	\]
	where the right-hand-side is deterministic.
	
	Next,  from \eqref{eq:LLN:one} and the continuous mapping theorem,
	\[
	\left\| 
	S_{{\bf P}_n,Q }
	\right\|_{L^2(Q)}^4
	\xrightarrow{a.s.}
	\left\| 
	S_{{\bf P},Q }
	\right\|_{L^2(Q)}^4.
	\]
	
	Finally,
	\begin{align*}
		\frac{1}{n}\sum_{i=1}^n
		\left( 
		\langle
		\xi_i
		,
		S_{{\bf P}_n,Q }
		\rangle_{L^2(Q)}
		\right)^2
		= &
		\frac{1}{n}\sum_{i=1}^n
		\left( 
		\langle
		\xi_i
		,
		S_{{\bf P},Q }
		\rangle_{L^2(Q)}
		+
		\langle
		\xi_i
		,
		S_{{\bf P}_n,Q }
		-
		S_{{\bf P},Q }
		\rangle_{L^2(Q)}
		\right)^2
		\\
		=  &
		\underbrace{\frac{2}{n}\sum_{i=1}^n 
			\langle
			\xi_i
			,
			S_{{\bf P},Q }
			\rangle_{L^2(Q)} \langle
			\xi_i
			,
			S_{{\bf P}_n,Q }-S_{{\bf P},Q }
			\rangle_{L^2(Q)}}_{=:A_n}\\
		& +\underbrace{\frac{1}{n}\sum_{i=1}^n
			\left( 
			\langle
			\xi_i
			,
			S_{{\bf P}_n,Q }
			-
			S_{{\bf P},Q }
			\rangle_{L^2(Q)}
			\right)^2}_{=:B_n}
		+\underbrace{\frac{1}{n}\sum_{i=1}^n
			\left( 
			\langle
			\xi_i
			,
			S_{{\bf P},Q }
			\rangle_{L^2(Q)}
			\right)^2}_{=:C_n}.
	\end{align*}
	We claim that $B_n \xrightarrow{a.s.} 0$ and that $C_n \xrightarrow{a.s.} \E
	\left[ 
	\left(
	\langle 
	\xi_1
	,
	S_{{\bf P},Q }
	\rangle_{L^2(Q)}
	\right)^2
	\right] \le 1$. From this claim and Cauchy-Schwartz inequality in $\R^n$, we will conclude that $A_n \xrightarrow{a.s.} 0$. From here, we obtain
	\[
	\widehat{\sigma}_{n,Q}^2
	\xrightarrow{a.s.}
	\frac{
		\E
		\left[ 
		\left(
		\langle 
		\xi_1
		,
		S_{{\bf P},Q }
		\rangle_{L^2(Q)}
		\right)^2
		\right]
		-
		\left\| 
		S_{{\bf P},Q }
		\right\|_{L^2(Q)}^4
	}{
		\left(
		\rm WSD(Q; {\bf P} ) - 1
		\right)^2
	}
	=
	\Var
	\left( 
	\frac{\langle \mathbb{G}_{{\bf P},Q}, S_{{\bf P},Q } \rangle_{L^2(Q)} }{ \rm WSD(Q; {\bf P} )-1}
	\right),
	\]
	since 
	$\xi_1$ has expectation $S_{{\bf P},Q }$ in $L^2(Q)$ and since
	$\mathbb{G}_{{\bf P},Q} \in L^2(Q)$ has the same covariance operator as $\xi_1$ (by the central limit theorem leading to Theorem \ref{Theorem:Glivenk}).
	
	Let us now prove the claim. For $B_n$ we have
	\[
	0 \le
	B_n 
	\le 
	\frac{1}{n}\sum_{i=1}^n
	\| 
	\xi_i
	\|_{L^2(Q)}^2
	\left\| 
	S_{{\bf P}_n,Q }
	-
	S_{{\bf P},Q }
	\right\|_{L^2(Q)}^2
	\le 
	\left\| 
	S_{{\bf P}_n,Q }
	-
	S_{{\bf P},Q }
	\right\|_{L^2(Q)}^2
	\xrightarrow{a.s.}0.
	\]
	The claim for $C_n$ follows directly from the strong law of large number in $\mathbb{R}$. 
	Hence, the lemma holds.
\end{proof}

\begin{proof}[Proof of Lemma~\ref{Lemma:ConvergenceBL}]
	Let  $S\subset \mathcal{P}_{p}(\R^d) $ be a closed set and define
	$$ {\rm BL}_1(S)=\left\{f:S\to \R : \ |f(P)|\leq 1\ {\rm and}\   |f(P)-f(Q) |\leq \mathcal{W}_{p}(P,Q), \ \forall \, P,Q\in S\right\}. $$
	Fix $f\in  {\rm BL}_1(\mathcal{P}_{p}(\R^d))$. Then 
	$$  \left|\int f(P)\mathrm{d}({\bf P}_{n,m}- {\bf P})(P)\right| \leq \underbrace{\left|\int f(P)\mathrm{d}({\bf P}_{n,m}- {\bf P}_{n})(P)\right|}_{A_{n,m}(f)}+ \underbrace{\left|\int f(P)\mathrm{d}( {\bf P}_{n}- {\bf P})(P)\right|}_{B_{n}(f)}, $$
	where ${\bf P}_{n}=\frac{1}{n}\sum_{i=1}^n \delta_{P_{i}}$.
	It can be proved by standard means that  $$\E\left[\sup_{ f\in  {\rm BL}_1(\mathcal{P}_{p}(\R^d))}B_n(f) \right] \to 0$$ as 
	$n\to \infty$. 
	Since $f\in {\rm BL}_1(\mathcal{P}_{p}(\R^d))$, it holds that  
	$$ A_{n,m}(f)=\left|  \frac{1}{n}\sum_{i=1}^n f(P_{i,m})-f(P_{i})  \right|\leq   \frac{1}{n}\sum_{i=1}^n \min(2,\mathcal{W}_p(P_{i,m},P_{i}))   $$
	which, by taking expectations, implies  
	$$\E\left[\sup_{ f\in  {\rm BL}_1(\mathcal{P}_{p}(\R^d))} A_{n,m}(f)\right] \leq  \frac{1}{n}\sum_{i=1}^n\E[\min(2,\mathcal{W}_p(P_{i,m},P_{i}))] .  $$
	Since the sequence 
	$\{\mathcal{W}_p(P_{i,m},P_{i})\}_{i=1}^n$ is exchangeable, it holds that 
	$$\E\left[\sup_{ f\in  {\rm BL}_1(\mathcal{P}_{p}(\R^d))} A_{n,m}(f)\right] \leq  \E[\min(2,\mathcal{W}_p(P_{1,m},P_{1}))] .  $$
	The latter tends to zero by Glivenko–Cantelli theorem and the fact that, conditionally to $P_1$, 
	$$\frac{1}{m} \sum_{j=1}^m  \X_{1,j}^p \xrightarrow{a.s.}  \int \|\x\|^p \mathrm{d}P_1(\x)  $$
	as $m\to \infty$.
\end{proof}

\begin{proof}[Proof of Lemma~\ref{lemma:rate:two:stage}]
	First, triangle inequality yields 
	$$ |  {\rm WSD}(Q; {\bf P}_{n,m})- {\rm WSD}(Q; {\bf P}_n)| \leq \| S_{{\bf P}_{n,m},Q}- S_{{\bf P}_{n},Q}\|_{L^ 2(Q)}. $$
	Next, since
	$$  S_{{\bf P}_{n,m},Q}= \frac{1}{n}  \sum_{i=1}^n \frac{I-T_{Q,P_{i,m}} }{\|I-T_{Q,P_{i,m}}\|_{L^2(Q)}}  $$
	and 
	$$  S_{{\bf P}_{n},Q}= \frac{1}{n}  \sum_{i=1}^n \frac{I-T_{Q,P_{i}}  }{\|I-T_{Q,P_{i}}\|_{L^2(Q)}}, $$
	we can bound 
	$$ \| S_{{\bf P}_{n,m},Q}- S_{{\bf P}_{n},Q}\|_{L^ 2(Q)} \leq \frac{1}{n}  \sum_{i=1}^n \left\| \frac{I-T_{Q,P_{i}}  }{\|I-T_{Q,P_{i}}\|_{L^2(Q)}} - \frac{I-T_{Q,P_{i,m}} }{\|I-T_{Q,P_{i,m}}\|_{L^2(Q)}}\right\|_{L^2(Q)}, $$
	and use Lemma~\ref{lem:diff:renormalized} to get 
	$$  \| S_{{\bf P}_{n,m},Q}- S_{{\bf P}_{n},Q}\|_{L^ 2(Q)} \leq \frac{2}{n}  \sum_{i=1}^n  \frac{\|T_{Q,P_{i,m}}-T_{Q,P_{i}}\|_{L^2(Q)}}{\|I-T_{Q,P_{i}}\|_{L^2(Q)}}.   $$
	Finally, we take expectations to get 
	\begin{align*}
		& \E 
		\left[ 
		\| S_{{\bf P}_{n,m},Q}- S_{{\bf P}_{n},Q}\|_{L^ 2(Q)}
		\right]
		\\
		&\leq \frac{2}{n}  \sum_{i=1}^n  \E\left[\frac{\|T_{Q,P_{i,m}}-T_{Q,P_{i}}\|_{L^2(Q)}}{\|I-T_{Q,P_{i}}\|_{L^2(Q)}} \right]\\
		&=\frac{2}{n}  \sum_{i=1}^n  \E\left[\E\left[\frac{\|T_{Q,P_{i,m}}-T_{Q,P_{i}}\|_{L^2(Q)}}{\|I-T_{Q,P_{i}}\|_{L^2(Q)}} \bigg\vert P_i \right] \right]\\
		&=\frac{2}{n}  \sum_{i=1}^n  \E\left[\E\left[ \|T_{Q,P_{i,m}}-T_{Q,P_{i}}\|_{L^2(Q)} \bigg\vert P_i \right] \frac{1}{\|I-T_{Q,P_{i}}\|_{L^2(Q)}} \right]\\
		\mbox{(using \eqref{Rates-OT}:)} 
		~ ~ ~
		&\leq 
		2
		\left( \alpha(d,m)\right)^{\frac{1}{2}} \E_{P_1\sim {\bf P}}\left[ W_2^{-1}(Q,P_{1})\right].
	\end{align*}
	This concludes the proof.
\end{proof}

\begin{Lemma}
	\label{lem:diff:renormalized}
	Let $x$, $y$ be two elements of a Hilbert space with  norm $\| \cdot \|$. Then we have
	\begin{equation} \label{eq:renorm:diff}
		\left\|  
		\frac{x}{\|x\|}
		-
		\frac{y}{\|y\|}
		\right\|
		\le 
		2 
		\frac{\|x-y\|}{\|x\|}.
	\end{equation}
\end{Lemma} 

\begin{proof}[Proof of Lemma \ref{lem:diff:renormalized}]
	If $\| x\| =0$ or $\| y\| =0$, it is simple to show that \eqref{eq:renorm:diff} holds, with the convention $\mathbf{0}/0 = \mathbf{0}$.
	Consider then $\| x\| > 0$ and $\| y\|  > 0$.  
	We have
	\begin{align*}
		\left\|  
		\frac{x}{\|x\|}
		-
		\frac{y}{\|y\|}
		\right\|
		&
		\le 
		\left\|  
		\frac{x}{\|x\|}
		-
		\frac{y}{\|x\|}
		\right\|
		+
		\left\|  
		\frac{y}{\|x\|}
		-
		\frac{y}{\|y\|}
		\right\|
		\\ & =
		\frac{\| x-y \|}{\|x\|}
		+
		\left\|  
		\frac{y \| y\|}{\|x\| \cdot \|y\| }
		-
		\frac{y \| x\|}{\|y\| \cdot \|x \|}
		\right\|
		\\ 
		& = 
		\frac{\| x-y \|}{\|x\|}
		+
		\frac{\left| \| y\| - \| x\| \right|}{\|x\|}
		\\ 
		& \le 
		2 \frac{\| x-y \|}{\|x\|},
	\end{align*}
	which concludes the proof.
\end{proof}

\section{Proof of Lemma~\ref{lemma:RelationDephts}}

Since $ \mathcal{W}_2(P,P')\leq \|T_{Q,P}-T_{Q,P'}\|_{L^2(Q)} $ then 
\begin{align*}
	\mathrm{MSD}(Q;{\bf P} )&= 1-\frac{1}{2}\E_{(P',P)\sim {\bf P} \otimes {\bf P} }\left[\frac{\mathcal{W}_2^2( P,Q)+\mathcal{W}_2^2( P',Q)-\mathcal{W}_2^2( P,P')}{\mathcal{W}_2( Q,P)\mathcal{W}_2( Q,P')} \right]\\
	&\leq 1-\underbrace{\frac{1}{2}\E_{(P',P)\sim {\bf P} \otimes {\bf P} }\left[\frac{\mathcal{W}_2^2( P,Q)+\mathcal{W}_2^2( P',Q)-\|T_{Q,P}-T_{Q,P'}\|_{L^2(Q)}^2  }{\mathcal{W}_2( Q,P)\mathcal{W}_2( Q,P')} \right]}_{=:A}
\end{align*}
with equality if and only if 
$ \mathcal{W}_2(P,P')=\|T_{Q,P}-T_{Q,P'}\|_{L^2(Q)} $ for ${\bf P} \otimes {\bf P}$-almost all
$P,P'$. Therefore, since 
\begin{align*}
	\langle T_{Q,P}-I, T_{Q,P'}-I\rangle_{L^2(Q)} &=  \| T_{Q,P}-I\|_{L^2(Q)}^2+ \langle T_{Q,P}-I,  T_{Q,P'}-T_{Q,P}\rangle_{L^2(Q)}
\end{align*}
and 
\begin{align*}
	\langle T_{Q,P}-I, T_{Q,P'}-I\rangle_{L^2(Q)} &=  \| T_{Q,P'}-I\|_{L^2(Q)}^2+ \langle T_{Q,P'}-I,  T_{Q,P}-T_{Q,P'}\rangle_{L^2(Q)}
\end{align*}
we get 
$$ 2\langle T_{Q,P}-I, T_{Q,P'}-I\rangle_{L^2(Q)} = \|T_{Q,P}-I\|_{L^2(Q)}^2+\|T_{Q,P'}-I \|_{L^2(Q)}^2-\|T_{Q,P}-T_{Q,P'}\|_{L^2(Q)}^2.  $$
As a consequence,
\begin{align*}
	A&= \frac{1}{2}\E_{(P',P)\sim {\bf P} \otimes {\bf P} }\left[\frac{\|T_{Q,P}-I\|_{L^2(Q)}^2+\|T_{Q,P'}-I \|_{L^2(Q)}^2-\|T_{Q,P}-T_{Q,P'}\|_{L^2(Q)}^2  }{\mathcal{W}_2( Q,P)\mathcal{W}_2( Q,P')} \right]\\
	&=\E_{(P',P)\sim {\bf P} \otimes {\bf P} }\left[\frac{\langle T_{Q,P}-I, T_{Q,P'}-I\rangle_{L^2(Q)}  }{\mathcal{W}_2( Q,P)\mathcal{W}_2( Q,P')} \right]\\
	&=(1- {\rm WSD}(Q;{\bf P} ))^2,
\end{align*}
and the result follows.

\section{Proofs for Section \ref{section:testing}}

\begin{proof}[Proof of Proposition \ref{prop:level:two:sample}]
	
	Let us first work conditionally to the independent copy
	$\widetilde{\mathbf{P}}_{n,m}$.  
	Under $H_0$, the elements of the vector 
	\begin{equation*}
		\mathcal{W} := (( W^P_i)_{i=1}^n ,  ( W^Q_k )_{k=1}^n) 
	\end{equation*}
	are i.i.d. and thus exchangeable. 
	Furthermore, there is a deterministic function $f : \mathbb{R}^{2n} \to [0,1]$ such that $    T_{\text{obs}} = f(  [\mathcal{W}_i]_{i=1}^{2n} )$. 	
	Write $\mathcal{S}_q$ for the set of all permutations of $\{1 , \dots , q\}$ for $q \in \mathbb{N}$.
	Then, for $b=1,\ldots,B$, there is a random permutation $\Pi_{b+1}$ uniformly distributed on $\mathcal{S}_{2n}$ such that  $T_{b} = f(  [\mathcal{W}_{\Pi_{b+1}(i)}]_{i=1}^{2n} )$. For convenience, for the rest of the proof,
	write $T_1,\ldots,T_{b}$ as $T_2,\ldots,T_{b+1}$ and write $T_{\text{obs}}$  as $T_1$. Let us show that $T_1,\ldots,T_{B+1}$ are exchangeable. For any measurable subsets $A_1 , \ldots, A_{B+1}$ of $[0,1]$, we have
	\begin{align*}
		& \mathbb{P} \left( 
		T_1 \in A_1 , \dots, T_{B+1} \in A_{B+1}
		\right) 
		\\
		=& 
		\mathbb{P} \left( 
		f(  [\mathcal{W}_i]_{i=1}^{2n} ) \in A_1
		, 
		f(  [\mathcal{W}_{\Pi_{2}(i)}]_{i=1}^{2n} ) \in A_2
		,
		\dots, 
		f(  [\mathcal{W}_{\Pi_{B+1}(i)}]_{i=1}^{2n} ) \in A_{B+1}
		\right) 
		\\ 
		=& 
		\frac{1}{ ((2n)!)^B }
		\sum_{\pi_2,\dots,\pi_{B+1} \in \mathcal{S}_{2n} }
		\\
		& 
		\mathbb{P} \left( 
		f(  [\mathcal{W}_i]_{i=1}^{2n} ) \in A_1
		, 
		f(  [\mathcal{W}_{\pi_{2}(i)}]_{i=1}^{2n} ) \in A_2
		,
		\dots, 
		f(  [\mathcal{W}_{\pi_{B+1}(i)}]_{i=1}^{2n} ) \in A_{B+1}
		\right).
	\end{align*}
	Above, since $\mathcal{W}_1, \ldots, \mathcal{W}_{2n}$ are exchangeable, the above is equal to
	\begin{align*}
		&	\frac{1}{ ((2n)!)^{B+1} }
		\sum_{\pi_1, \pi_2,\dots,\pi_{B+1} \in \mathcal{S}_{2n} }
		\\
		&	\mathbb{P} \left( 
		f(  [\mathcal{W}_{ \pi_1(i) }]_{i=1}^{2n} ) \in A_1
		, 
		f(  [\mathcal{W}_{\pi_{2} \circ \pi_1(i)}]_{i=1}^{2n} ) \in A_2
		,
		\dots, 
		f(  [\mathcal{W}_{\pi_{B+1} \circ \pi_1 (i)}]_{i=1}^{2n} ) \in A_{B+1}
		\right).
	\end{align*}
	For any fixed $\pi_1$,	we can apply a change of indices in the above summation over $\pi_2,\dots,\pi_{B+1}$, with the bijection $(\pi_2,\ldots,\pi_{B+1}) \mapsto (\pi_2 \circ \pi_1, \ldots, \pi_{B+1} \circ \pi_1)$. With that, the above display is equal to
	\begin{align*}
		&	\frac{1}{ ((2n)!)^{B+1} }
		\sum_{\pi_1, \pi_2,\dots,\pi_{B+1} \in \mathcal{S}_{2n} }
		\\
		&	\mathbb{P} \left( 
		f(  [\mathcal{W}_{ \pi_1(i) }]_{i=1}^{2n} ) \in A_1
		, 
		f(  [\mathcal{W}_{\pi_{2} (i)}]_{i=1}^{2n} ) \in A_2
		,
		\dots, 
		f(  [\mathcal{W}_{\pi_{B+1}  (i)}]_{i=1}^{2n} ) \in A_{B+1}
		\right).
	\end{align*}
	Hence, in distribution, $T_1,\ldots,T_{B+1}$ is composed of the same function $f$ applied to $B+1$ \textit{i.i.d.} uniform permutations of the same vector $\mathcal{W}$. Hence $T_1,\ldots,T_{B+1}$ are exchangeable.

	Now, let us show that the ranks of $ \widetilde{T}_1,\ldots, \widetilde{T}_{B+1}$ are uniformly distributed on $\mathcal{S}_{B+1}$, where $ \widetilde{T}_1,\ldots, \widetilde{T}_{B+1}$ are obtained from $ T_1,\ldots, T_{B+1}$ by the random mechanism that breaks ties (we also shift indices from $\{0,\dots,B\}$ to $\{1,\dots,B+1\}$ for convenience). For this, write the set of possible cardinalities among clusters of equal values,
	\[
	\mathcal{E}
	=
	\left\{ 
	(n_1,\dots,n_m) \in \mathbb{N}^m;
	m \in \mathbb{N}, n_1 + \dots + n_m = B+1 
	\right\}.
	\]
	For $(n_1,\dots,n_m) \in \mathcal{E}$, and for $t_1,\dots,t_{B+1} \in [0,1]$, we let $E_{n_1,\dots,n_m}(t_1,\dots,t_{B+1})$ be the event where $t_1 \le \dots \le t_{B+1}$ and  among the set $\{ t_1,\dots,t_{B+1} \}$ there are $m$ distinct values, with the smallest one reached by $m_1$ elements of $(t_1,\dots,t_{B+1})$, the second to smallest one reached by $m_2$ elements of $(t_1,\dots,t_{B+1})$, and so on.
	With this formalism, we have for any permutation $\sigma \in \mathcal{S}_{B+1}$,
	\begin{align*}
		&		\mathbb{P}
		\left(
		\widetilde{T}_{\sigma(1)}
		< \dots < 
		\widetilde{T}_{\sigma(B+1)}
		\right)
		\\
		&= 
		\sum_{(n_1,\dots,n_m) \in 	\mathcal{E}}
		\mathbb{P}
		\left(
		\left. 
		\widetilde{T}_{\sigma(1)}
		< \dots <
		\widetilde{T}_{\sigma(B+1)}
		\right| 
		E_{n_1,\dots,n_m}(T_{\sigma(1)} , \dots , T_{\sigma(B+1)})
		\right)
		\mathbb{P}
		\left(
		E_{n_1,\dots,n_m}(T_{\sigma(1)} , \dots , T_{\sigma(B+1)})
		\right)
		\\
		&= 
		\sum_{(n_1,\dots,n_m) \in 	\mathcal{E}}
		\frac{1}{
			(n_1!)
			\times \dots \times
			(n_m!)
		}
		\mathbb{P}
		\left(
		E_{n_1,\dots,n_m}(T_{\sigma(1)} , \dots , T_{\sigma(B+1)})
		\right)
		\\
		&= 
		\sum_{(n_1,\dots,n_m) \in 	\mathcal{E}}
		\frac{1}{
			(n_1!)
			\times \dots \times
			(n_m!)
		}
		\mathbb{P}
		\left(
		E_{n_1,\dots,n_m}(T_1 , \dots , T_{B+1})
		\right)
		~ ~ ~ ~ ~
		\text{($T_1,\dots,T_{B+1}$ are exchangeable)}
		\\
		& = \sum_{(n_1,\dots,n_m) \in 	\mathcal{E}}
		\mathbb{P}
		\left(
		\left. 
		\widetilde{T}_{1}
		< \dots < 
		\widetilde{T}_{B+1}
		\right| 
		E_{n_1,\dots,n_m}(T_{1} , \dots , T_{B+1})
		\right)
		\mathbb{P}
		\left(
		E_{n_1,\dots,n_m}(T_{1} , \dots , T_{B+1})
		\right)
		\\
		& = 
		\mathbb{P}
		\left(
		\widetilde{T}_{1}
		< \dots < 
		\widetilde{T}_{B+1}
		\right).
	\end{align*}
	Hence, indeed the ranks of $\widetilde{T}_1,\ldots,\widetilde{T}_{B+1}$ are uniformly distributed over $\mathcal{S}_{B+1}$. 	
	It follows that 
	\[
	p=	\frac{ 1+ \#\{b \in \{2,\dots,B+1\}: \widetilde{T}_b\geq \widetilde{T}_{1} \} }{ 1+B } 
	\]
	is uniformly distributed on $\{ \frac{1}{B+1} , \frac{2}{B+1}, \dots ,\frac{B}{B+1}, 1\}$. This is shown conditionally to the independent copy
	$\widetilde{\mathbf{P}}_{n,m}$ at this stage.  By the law of total expectation, $p$ is also uniformly distributed  on $\{ \frac{1}{B+1} , \frac{2}{B+1}, \dots ,\frac{B}{B+1}, 1\}$ unconditionally.
	
	Finally, it is simple to show that, as a consequence, $p \stackrel{w}{\longrightarrow} \rm{Unif}(0, 1)$. This concludes the proof.
\end{proof}

\begin{proof}[Proof of Proposition \ref{testing_power}]
	
	Let $F_{n,n}$ be the random empirical CDF of
	\[
	\frac{1}{n}
	\sum_{i=1}^n
	\delta_{ \rm{WSD}(P_{i}; \widetilde{\mathbf{P}}_{n}) }.
	\]
	Let $F_n$ be the random CDF of $\rm{WSD}(P; \widetilde{\mathbf{P}}_{n})$ for $P \sim \mathbf{P}$ and conditionally to $\widetilde{\mathbf{P}}_{n}$. By applying the DKW inequality conditionally to $\widetilde{\mathbf{P}}_{n}$ and then taking an expectation, we obtain
	\[
	\sup_{x \in [0,1]}
	\left| 
	F_{n,n}(x)
	-
	F_n(x)
	\right| 
	\stackrel{w}{\longrightarrow} 
	0
	\]
	as $n \to \infty$. 
	
	By Theorem \ref{Theorem:Glivenk}, for any fixed $Q \in \mathcal{P}_2^{a.c}(\R^d)$, we have $\rm{WSD}(Q; \widetilde{\mathbf{P}}_{n})
	\stackrel{a.s.}{\longrightarrow} 
	\rm{WSD}(Q; \mathbf{P})
	$. Hence, a.s., the distribution with CDF $F_n$ converges in distribution to the distribution with CDF $F$. Since $F$ is continuous, a.s., for every $x \in \mathbb{R}$,
	$F_n(x) \to F(x)$. Hence from the second theorem of Dini, we have
	\[
	\sup_{x \in [0,1]}
	\left| 
	F_{n}(x)
	-
	F(x)
	\right| 
	\stackrel{a.s.}{\longrightarrow} 
	0.
	\]
	Hence we have 
	\[
	\sup_{x \in [0,1]}
	\left| 
	F_{n,n}(x)
	-
	F(x)
	\right| 
	\stackrel{}{\longrightarrow} 
	0
	\]
	as $n \to \infty$ in probability. 
	
	Next, let $G_{n,n}$ be the random empirical CDF of
	\[
	\frac{1}{n}
	\sum_{i=1}^n
	\delta_{ \rm{WSD}(Q_{i}; \widetilde{\mathbf{P}}_{n}) }.
	\]
	With the same arguments as above, we have
	\[
	\sup_{x \in [0,1]}
	\left| 
	G_{n,n}(x)
	-
	G(x)
	\right| 
	\stackrel{}{\longrightarrow} 
	0
	\]
	as $n \to \infty$ in probability.
	Hence, by the triangle inequality,
	\[
	T_{\text{obs}}
	\stackrel{}{\longrightarrow} 
	\sup_{x \in [0,1]}
	\left| 
	F(x)
	-
	G(x)
	\right| 
	>0,
	\]
	as $n \to \infty$ in probability. 
	
	Next, write, as in the proof of Proposition \ref{prop:level:two:sample},
	\[
	\mathcal{W} := (( W^P_i)_{i=1}^n ,  ( W^Q_k )_{k=1}^n),
	\]
	consider a fixed $b$
	and write
	\[
	T_b = 
	\sup_{x \in [0,1]}
	\left| 
	F^b_{n,n}(x)
	-
	G^b_{n,n}(x)
	\right|, 
	\]
	where $F^b_{n,n}$ is the empirical CDF of 
	$
	\mathcal{W}_{\Pi_b(1)},
	\dots,
	\mathcal{W}_{\Pi_b(n)}
	$, where
	$G^b_{n,n}$ is the empirical CDF of 
	$
	\mathcal{W}_{\Pi_b(n+1)},
	\dots,
	\mathcal{W}_{\Pi_b(2n)}
	$
	and where $\Pi_b$ is a uniformly distributed permutation on $\{1 , \dots,2n\}$.
	
	Then, from \cite[Thm 3.7.2]{vanderVaart1996}, we have, a.s.,
	\[
	\sup_{x \in [0,1]}
	\left| 
	F^b_{n,n}(x)
	-
	\frac{1}{2}
	(F^b_{n,n}(x)
	+
	G^b_{n,n}(x))
	\right| 
	\to 0,
	\]
	and 
	\[
	\sup_{x \in [0,1]}
	\left| 
	G^b_{n,n}(x)
	-
	\frac{1}{2}
	(F^b_{n,n}(x)
	+
	G^b_{n,n}(x))
	\right| 
	\to 0,
	\]
	as $n \to \infty$.
	Hence, 
	a.s., $T_b \to 0$ as $n \to \infty$. It follows that, for any $\epsilon >0$,
	\[
	\mathbb{E}
	\left[ 
	\frac{1+\sum_{b=1}^{B}
		\mathbf{1} \left\{ 
		T_b \ge \epsilon
		\right\}
	}{B+1}
	\right]
	\le 
	\frac{1}{B+1}
	+
	\mathbb{E}
	\left[ 
	\mathbf{1} \left\{ 
	T_b \ge \epsilon \right\}
	\right]
	\to 0
	\]
	as $n,B \to \infty$. Hence $ p$ goes to $0$ as $n,B \to \infty$ in probability.
	
\end{proof}

\begin{funding}
Fran\c{c}ois Bachoc was supported by the Project GAP (ANR-21-CE40-0007)
of the French National Research Agency (ANR)
and by the Chair UQPhysAI of the Toulouse ANITI AI Cluster. 
\end{funding}

\end{document}